\documentclass[11pt,a4paper]{article}

\setlength{\parindent}{0em}
\setlength{\parskip}{1.5ex}

\usepackage[english]{babel}
\emergencystretch=1em
\clubpenalty=10000
\widowpenalty=10000
\displaywidowpenalty=10000

\usepackage[blocks]{authblk}

\usepackage{amsmath}
\usepackage{amssymb}
\usepackage{enumitem}

\mathcode`l="8000
\begingroup
\makeatletter
\lccode`\~=`\l
\DeclareMathSymbol{\lsb@l}{\mathalpha}{letters}{`l}
\lowercase{\gdef~{\ifnum\the\mathgroup=\m@ne \ell \else \lsb@l \fi}}%
\endgroup

\makeatletter
\let\@fnsymbol\@arabic
\makeatother

\setlist[enumerate]{left=0pt,label=(\roman*), ref=(\roman*),font=\normalfont,topsep=-1ex,parsep=-.3ex,partopsep=0pt}

\usepackage{xcolor}
\usepackage{tikz}
\usetikzlibrary{calc}
\usetikzlibrary{decorations.pathreplacing}

\usepackage[style=math, giveninits=true,dashed=false]{biblatex}
\addbibresource{lit.bib}

\usepackage[thmmarks,hyperref]{ntheorem}
\theoremstyle{break}
\theoremheaderfont{\normalfont\bfseries}
\theorembodyfont{\slshape}
\theoremseparator{.}
\theorempreskip{3ex}
\theorempostskip{\topsep}
\theoremindent0cm
\theoremnumbering{arabic}
\theoremsymbol{}
\newtheorem{theorem}{Theorem}
\newtheorem{cl}{Claim}

\theoremstyle{plain}
\newtheorem{lemma}{Lemma}
\newtheorem{obs}{Observation}
\newtheorem{stat}{Statement}

\makeatletter
\newtheoremstyle{proof}%
{\item[\rlap{\vbox{\hbox{\hskip\labelsep \theorem@headerfont
##1\theorem@separator}\hbox{\strut}}}]}%
{\item[\rlap{\vbox{\hbox{\hskip\labelsep \theorem@headerfont ##1\ ##3\theorem@separator}\hbox{\strut}}}]}
\makeatother

\theoremheaderfont{\normalfont\bfseries}
\theorembodyfont{\upshape}

\theoremstyle{proof}

\theorempreskip{3.5ex}
\theorempostskip{\topsep}
\theoremseparator{.}

\theoremsymbol{\ensuremath{_\Box}}

\newtheorem{proof}{Proof}

\theoremheaderfont{\itshape}
\theoremsymbol{\ensuremath{_\Diamond}}

\newtheorem{smallproof}{Proof}

\newcommand{\specialedges}{E'}

\usepackage{hyperref}
\hypersetup{
    colorlinks=true,
    linkcolor=black,
    citecolor=black,
    pdftitle={Rooted Minors and Locally Spanning Subgraphs},
    pdfauthor={Böhme, Harant, Kriesell, Mohr, Schmidt},
    bookmarks=true,
    pdfpagemode=UseOutlines,
}

\title{Rooted Minors and Locally Spanning Subgraphs}

\newcommand\DAADmark{\footnotemark[1]}

\author{Thomas Böhme\thanks{Project initiated by partial supported from DAAD, Germany (as part of BMBF) and the Ministry of Education, Science, Research and Sport of the Slovak Republic within the project 57447800.}}
\author{Jochen Harant\protect\DAADmark}
\author{Matthias Kriesell\protect\DAADmark}
\affil{Institut für Mathematik der Technischen Universität Ilmenau, Weimarer Straße 25, 98693 Ilmenau, Germany}

\author{Samuel Mohr\protect\DAADmark\enspace\thanks{Supported by the MUNI Award in Science and
Humanities of the Grant Agency of Masaryk University.}\enspace\thanks{Previous affiliation:  Technische Universität Ilmenau.}\enspace\thanks{Gefördert durch die Deutsche Forschungsgemeinschaft (DFG) -- 327533333.}}
\affil{Masarykova univerzita Brno, Fakulta informatiky, \authorcr Botanická 554/68a, 602 00 Brno, Česká republika}

\author{Jens M. Schmidt\protect\DAADmark\enspace\thanks{This research is supported by the grant SCHM 3186/2-1 (401348462) from the Deutsche Forschungsgemeinschaft (DFG, German Research Foundation)}}
\affil{University of Rostock, Institute of Computer Science,
Albert-Einstein-Straße 22, 18059 Rostock, Germany}

\begin{document}
\maketitle

\begin{abstract}
\setlength{\parindent}{0em}
\setlength{\parskip}{1.5ex}
\noindent

\noindent 
Results on the existence of various types of spanning subgraphs of graphs are milestones in structural graph theory and have been diversified in several directions. In the present paper, we consider ``local'' versions of such statements.
In 1966, for instance,  D.\,W. Barnette proved that a $3$-connected planar graph contains a spanning tree of maximum degree at most $3$. A local translation  of this statement is  that if $G$ is a planar graph, $X$ is a subset of specified vertices of $G$ such that $X$ cannot be separated in $G$ by removing $2$ or fewer vertices of $G$, then $G$ has a  tree of maximum degree at most $3$ containing all vertices of $X$.

Our results constitute a general
machinery for strengthening statements about $k$-connected graphs (for $1
\leq k \leq 4$) to locally spanning versions, i.\,e.\ subgraphs containing a set $X\subseteq V(G)$ of a (not necessarily planar) graph $G$ in which
only $X$ has high connectedness.
Given a graph $G$ and $X\subseteq V(G)$, we say $M$ is a \emph{minor of $G$ rooted at $X$}, if $M$ is a minor of $G$ such that each bag of $M$ contains at most one vertex of $X$ and $X$ is a subset of the union of all bags.
We show that $G$ has a highly connected minor rooted at $X$ if $X\subseteq V(G)$ cannot be separated in $G$ by removing a few vertices of $G$. 

Combining these investigations and the theory of Tutte paths in the planar case yields to
locally spanning versions of six well-known results about degree-bounded trees, hamiltonian paths and cycles, and $2$-connected subgraphs of  graphs.

\medskip

\textbf{AMS classification:} 05C83, 05C40, 05C38.

\textbf{Keywords:} Minor, rooted minor, connectedness, spanning subgraph. 
\end{abstract}

\section{Introduction and $\boldsymbol{X}$\!-spanning Subgraph Results}\label{int}

In the present paper, we consider simple, finite, and undirected graphs; $V(G)$ and $E(G)$ denote the vertex set and the edge set of a graph $G$, respectively. For graph terminology not defined here, we refer to~\cite{diestel2017graph}. 
For a graph $G$ and a set  $X\subseteq V(G)$ of specified vertices, we say that a subgraph $H$ of $G$ is an \emph{$X$\!-spanning} subgraph of $G$ if $H$ contains $X$.  As usual, $H$ is  \emph{spanning} in the case of $X=V(G)$. For a positive integer $t$, a \emph{$t$-tree} is a tree with maximum degree at most $t$. 

This paper aims to constitute a general
machinery for strengthening statements about $k$-connected graphs to locally spanning versions. 
In particular, we translate well known results about spanning subgraphs to propositions about $X$\!-spanning subgraphs. 
As a starting point, we list six famous results.
Local versions of all these statements will be proved later; the forthcoming Theorems~\ref{f3} and~\ref{S} capture these results. 

To prove the above mentioned strengthening, we develop a new result on the existence of highly connected rooted minors (see Theorem~\ref{main} in Section~\ref{M}). 
Moreover, the proofs will draw tools from the theory of Tutte paths in $2$-connected plane graphs.

\bigskip

For $3$-connected planar graphs, Barnette, Biedl, and Gao proved the following Statements~\ref{Barnette} and~\ref{Gao}, where
Statement~\ref{Barnette} is best possible since there are $3$-connected planar graphs without a hamiltonian path.

\begin{stat}[D.~W.~Barnette~\cite{barnette1966trees}, T.~Biedl~\cite{biedl2014trees}]\label{Barnette}
If $G$ is a $3$-connected planar graph and $uv\in E(G)$, then $G$ has a spanning $3$-tree, such that  $u$ and $v$ are
leaves of that tree.
\end{stat}

\begin{stat}[Z.~Gao~\cite{gao19952}]\label{Gao}
A $3$-connected  planar graph $G$ contains a $2$-connected spanning subgraph of maximum
degree at most $6$.
\end{stat}

In~\cite{barnette19942}, it is shown that the constant $6$ in Statement~\ref{Gao} cannot be replaced with $5$.

Tutte~\cite{tutte1956theorem} proved that every $4$-connected planar graph has a hamiltonian cycle, and Thomassen~\cite{thomassen1983theorem} generalized this
result by showing that every $4$-connected
planar graph has a hamiltonian path connecting every given pair of vertices. Eventually, Sanders~\cite{sanders1997paths} extended the results of Thomassen and
of Tutte and proved the following statement.

\begin{stat}[D.~P.~Sanders~\cite{sanders1997paths}]\label{Sanders}
Every $4$-connected planar graph $G$ has a hamiltonian path between any two
specified vertices $u$ and $v$ and
containing any specified edge other than $uv$. 
\end{stat}

In \cite{goring2010hamiltonian}, it is shown that Statement \ref{Sanders} is best possible in the sense that there are $4$-connected maximal planar graphs with three edges
of large distance apart such that any hamiltonian
cycle misses one of them.

We know that $4$-connected planar graphs are hamiltonian, that means that they have a cycle through all vertices. 
An immediate consequence of Statement~\ref{Sanders} is that every $4$-connected planar graph even has a cycle containing all but one vertex. 
This raises the natural question whether those graphs have a cycle through all but two vertices. An affirmative answer was conjectured by Plummer~\cite{plummer1975problem}. Thomas and Yu gave a proof and showed Statement~\ref{ToYu}.

\begin{stat}[R.~Thomas, X.~Yu~\cite{thomas19944}]\label{ToYu}
A graph obtained from a $4$-con\-nec\-ted planar graph $G$ on at least $5$ vertices by deleting $2$
vertices  is hamiltonian.
\end{stat}

  Clearly, if three vertices of a
$4$-separator of a $4$-connected planar graph are removed, then the resulting graph does not contain a hamiltonian cycle, thus, Statement~\ref{ToYu} is
best possible.

For not necessarily planar graphs, Statements \ref{OtOz} and  \ref{Ellingham} hold.

\begin{stat}[K.~Ota, K.~Ozeki~\cite{ota2009spanning}]\label{OtOz}
Let $t \ge 4$ be an even integer and let $G$ be a $3$-connected graph.
If $G$ has no $K_{3,t}$-minor, then $G$ has a spanning $(t-1)$-tree.
\end{stat}

For a surface $\Sigma$, the \emph{Euler characteristic $\chi$} is defined by $\chi = 2-2g$ if $\Sigma$ is an
orientable surface of genus $g$, and by $\chi = 2-g$ if $\Sigma$ is a non-orientable surface of
genus $g$. Ellingham showed the following result.

\begin{stat}[M.~Ellingham~\cite{ellingham1996spanning},~\cite{ozeki2011spanning}]\label{Ellingham}
Let $G$ be a $4$-connected graph embedded on a surface of Euler characteristic $\chi <0$. Then $G$ has a spanning $\lceil \frac{10-\chi }{4}\rceil$-tree.
\end{stat}

In the sequel, \emph{$X$\!-spanning versions} of all six statements listed above are given in Theorem \ref{f3} and Theorem \ref{S}. 
Considering that, we define the connectedness of a set $X\subseteq V(G)$ in a graph $G$ first.
A set $S\subset V(G)$ is an \emph{$X$\!-separator} of $G$ if at least two components of $G-S$ obtained from $G$ by removing
$S$ contain a vertex of $X$.

Let $\kappa_G (X)$ be the maximum integer less than or  equal to $|X|-1$ such that the cardinality of each $X$\!-separator $S\subset V(G)$ --- if any exists --- is at least $\kappa_G (X)$. 
It follows that $\kappa_G (X)=|X|-1$  if $G[X]$ is complete, where  $G[X]$ denotes the subgraph of $G$ \emph{induced} by $X$; however, if $X$ is a proper subset of $V(G)$, then the converse need not be true.
If $\kappa_G(V(G))\ge k$ for a graph $G$, then we say that $G$ is \emph{$k$-connected}, and a $V(G)$-separator of $G$ is a \emph{separator} of $G$. 
This terminology corresponds to the commonly used definition of connectedness, e.\,g.\ in~\cite{diestel2017graph}.

In Theorem~\ref{f3}, local versions of Statements~\ref{Barnette}, \ref{Gao}, and~\ref{OtOz} are presented. 
The proof follows almost immediately from the statements
and Theorem~\ref{main} from Section~\ref*{M}. 
A detailed proof is given in Section~\ref{P12}. 

\begin{theorem} \label{f3}
\begin{enumerate}
\item If $G$ is a planar graph, $X\subseteq V(G)$, and $\kappa_G(X)\ge 3$,   then $G$ contains an $X$\!-spanning $3$-tree $T$. Moreover, if $x_1x_2\in E(G[X])$, then $T$ can be chosen such that $x_1$ and $x_2$ are leaves of $T$.
\item If $G$ is a planar graph, $X\subseteq V(G)$, and $\kappa_G(X)\ge 3$,   then $G$ contains
  a $2$-connected $X$\!-spanning subgraph $H$ of maximum degree at most~$6$.
\item If $t \ge 4$ is an even integer, $X\subseteq V(G)$ for a graph $G$, $\kappa_G(X)\ge 3$, and
$G$ has no $K_{3,t}$-minor, then $G$ has an $X$\!-spanning $(t-1)$-tree.
\end{enumerate}
\end{theorem}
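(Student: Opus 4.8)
The plan is to reduce each of the three parts to its classical $3$-connected counterpart by passing to a suitable rooted minor. Since $\kappa_G(X)\ge 3$, Theorem~\ref{main} supplies a $3$-connected minor $M$ of $G$ rooted at $X$; write $\{B_v\}_{v\in V(M)}$ for its branch sets, so that each $B_v$ is a connected subgraph of $G$, the branch sets are pairwise disjoint, every edge $vw\in E(M)$ is witnessed by an edge of $G$ between $B_v$ and $B_w$, and every $x\in X$ is the unique root of some branch set. Because a minor of a planar graph is planar, $M$ is planar in parts~(i) and~(ii); because a minor of a minor is a minor, $M$ has no $K_{3,t}$-minor in part~(iii). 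Applying Statement~\ref{Barnette}, Statement~\ref{Gao}, and Statement~\ref{OtOz} to the $3$-connected graph $M$ therefore yields, respectively, a spanning $3$-tree $T'$ of $M$, a $2$-connected spanning subgraph $H'$ of $M$ of maximum degree at most~$6$, and a spanning $(t-1)$-tree $T'$ of $M$. It remains to lift these spanning structures of $M$ to $X$-spanning structures of $G$ of the same type.

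For the tree statements~(i) and~(iii) I would build $T$ as follows. For every edge $vw\in E(T')$ fix one edge $e_{vw}\in E(G)$ joining $B_v$ and $B_w$, and inside each branch set $B_v$ take a subtree $S_v$ connecting the root $x_v$ (if present) to the endpoints in $B_v$ of those fixed edges; set $T:=\bigcup_v S_v\cup\{e_{vw}:vw\in E(T')\}$. Since $T'$ is a tree and the branch sets are disjoint and connected, $T$ is a tree containing $X$. For the leaf condition in~(i): since $x_1x_2\in E(G)$ runs between the branch sets carrying $x_1$ and $x_2$, I may add this edge to the model and assume the corresponding vertices of $M$ are adjacent—still a $3$-connected planar minor of $G$—so Biedl's form of Statement~\ref{Barnette} makes them leaves of $T'$; each then has a single incident edge, and routing its branch set so that the root is an endpoint keeps $x_1$ and $x_2$ as leaves of $T$.

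The crux, and the step I expect to be hardest, is to carry out this lift while preserving the degree bound. A vertex $u\in B_v$ has $\deg_T(u)=\deg_{S_v}(u)+(\text{number of fixed cross edges at }u)$, so a branch vertex of $S_v$ that is also an attachment point, or a branch set in which all $\le 3$ used attachment points and the root are forced through a common cut vertex (a $K_{1,4}$-like configuration), would create a vertex of degree~$4$ and destroy the $3$-tree property. I would handle this by using only the at most $\deg_{T'}(v)\le 3$ attachment points actually needed for $T'$, pruning $S_v$ to the minimal subtree on the root and these points so that every attachment point becomes a leaf, and arguing—this is where I expect to lean on additional structure of the minor produced by Theorem~\ref{main}, for instance that the branch sets may be taken subcubic or path-like—that no forced degree-$4$ vertex arises. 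The analogous lift of the $2$-connected subgraph $H'$ in~(ii) replaces each vertex of $M$ by a connected piece of its branch set joined along all cross edges of $H'$; here the looser bound~$6$ leaves ample room for the internal routing, and the main point to verify is that $2$-connectivity survives the replacement, which follows from the connectivity of the branch sets together with the $2$-connectivity of $H'$.
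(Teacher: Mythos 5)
Your overall strategy (pass to a $3$-connected rooted minor, apply the classical statement there, lift back) is the right one, but the lift as you set it up has a genuine gap, and it is exactly the gap you flag yourself in the final paragraph. With an ordinary $X$\!-minor from Theorem~\ref{main}~(i) there is no control over the internal structure of the branch sets: a vertex $u\in B_v$ may be forced to carry several of the fixed cross edges $e_{vw}$ together with internal edges of $S_v$, and nothing in Theorem~\ref{main}~(i) lets you assume the branch sets are ``subcubic or path-like.'' Concretely, in part~(i) a branch set whose root and three needed attachment points all meet at one cut vertex of $G[B_v]$ forces a degree-$4$ vertex in $T$; in part~(ii) your claim that $2$-connectivity of the blow-up ``follows from the connectivity of the branch sets together with the $2$-connectivity of $H'$'' is false in general (attach all cross edges of $H'$ at a single vertex of $B_v$ while the root sits elsewhere in $B_v$, and that vertex becomes a cutvertex), and the degree bound $6$ can likewise be exceeded. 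So the argument does not close as written.

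The paper sidesteps all of this by invoking Theorem~\ref{main}~(ii) instead of~(i): for $k\le 3$ one gets a $3$-connected \emph{topological} $X$\!-minor $M$, i.\,e.\ an isomorphism $\varphi$ from a subdivision of $M$ onto a subgraph of $G$ fixing all vertices of $M$ (hence all of $X$). Then the spanning $3$-tree, the $2$-connected spanning subgraph of maximum degree at most $6$, or the spanning $(t-1)$-tree of $M$ is carried by $\varphi$ to a subdivision of itself inside $G$, and subdividing edges preserves tree structure, $2$-connectedness, leaves, and every degree bound (new vertices have degree $2$, old degrees are unchanged). The only extra step is the one you also perform: if $x_1x_2\in E(G[X])$ is missing from $M$, add it (this keeps $M$ a $3$-connected topological $X$\!-minor since the edge itself lies in $G$) so that Statement~\ref{Barnette} can make $x_1,x_2$ leaves. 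If you want to repair your write-up, replacing the branch-set lift by this topological-minor lift is the missing idea; otherwise you would have to prove a strengthening of Theorem~\ref{main}~(i) that controls the branch sets, which the paper does not provide and which your proposal does not supply.
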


Note that the local version of Barnette's result, which was stated in the abstract, follows in case $|X|\ge 4$ from Theorem~\ref{f3}~(i), whereas the case $|X|\le 3$ is trivial.

\begin{theorem}\label{S}
\begin{enumerate}
\item If $G$ is a planar graph, $X\subseteq V(G)$,  $\kappa_G (X)\ge 4$,
$x_1,x_2\in X$, $\specialedges\subseteq E(G[X])$, $|\specialedges|\le 1$, and $x_1x_2\notin \specialedges$, then $G$  contains an $X$\!-spanning path $P$ connecting $x_1$ and $x_2$ with $\specialedges\subseteq E(P)$.
\item If $G$ is a planar graph, $X\subseteq V(G)$, $\kappa_G (X)\ge 4$, and
$Y$ is a set of at most two vertices of $G$, then $G-Y$  contains an $(X\setminus Y)$-spanning cycle.
\item Let $G$ be a  graph embedded on a surface of Euler characteristic $\chi <0$, $X\subseteq V(G)$,  and $\kappa_G (X)\ge 4$. Then $G$ has an $X$\!-spanning $(\lceil \frac{10-\chi }{4}\rceil+1)$-tree.
\end{enumerate}
\end{theorem}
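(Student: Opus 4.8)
The plan is to reduce Theorem~\ref{S}~(iii) to Ellingham's Statement~\ref{Ellingham} by passing to a highly connected rooted minor. Since $\kappa_G(X)\ge 4$, Theorem~\ref{main} provides a $4$-connected minor $M$ of $G$ rooted at $X$; I write $(B_v)_{v\in V(M)}$ for its branch sets, so that each $G[B_v]$ is connected, each $B_v$ meets $X$ in at most one vertex $x_v$, and $X\subseteq\bigcup_v B_v$. Embeddability on a fixed surface is preserved under taking minors, so $M$ embeds on the same surface as $G$, which has Euler characteristic $\chi<0$; thus Statement~\ref{Ellingham} applies to $M$ and yields a spanning tree $T_M$ of $M$ with maximum degree at most $d:=\lceil\frac{10-\chi}{4}\rceil$. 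It then remains to lift $T_M$ to an $X$\!-spanning tree of $G$ at the cost of at most one in the maximum degree.

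For the lifting I would first fix, for every edge $vw\in E(T_M)$, a single edge $g_{vw}\in E(G)$ joining $B_v$ and $B_w$, and call its endpoint in $B_v$ the \emph{port} of $v$ towards $w$. For each $v$, let $R_v\subseteq B_v$ be the set of all ports of $v$ together with $x_v$ if the latter exists; as $G[B_v]$ is connected, I choose a \emph{minimal} subtree $S_v$ of $G[B_v]$ spanning $R_v$, so that every leaf of $S_v$ lies in $R_v$. Setting $T=\bigcup_v S_v\cup\{g_{vw}:vw\in E(T_M)\}$, contracting each connected $S_v$ recovers $T_M$, whence $T$ is a tree; and since $x_v\in R_v\subseteq V(S_v)$ for every $v$ with $x_v\in X$, the tree $T$ is $X$\!-spanning. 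Note that, because $T$ need only contain $X$ and not all of $\bigcup_v B_v$, the trees $S_v$ may be kept as sparse as the minimality condition allows.

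The degree analysis is the crux of the argument and the source of the additive $+1$, and it is the step I expect to require the most care, since minimal Steiner trees can a priori carry high-degree internal vertices. For $u\in B_v$ one has $\deg_T(u)=\deg_{S_v}(u)+p_v(u)$, where $p_v(u)$ is the number of ports of $v$ realised at $u$, and $\sum_u p_v(u)=\deg_{T_M}(v)=:\delta_v\le d$. The key structural facts are that minimality forces every branch of $S_v$ at any vertex to contain a terminal of $R_v$, and that the rooted condition caps the number of \emph{non-port} terminals at one per bag, namely $x_v$. A short count then shows, separately for a port, a leaf, and an internal non-terminal, that $\deg_{S_v}(u)+p_v(u)\le\delta_v+1$: for an internal port $u$ the number of branches at $u$ is at most the number of remaining terminals, at most the number $s$ of distinct ports, while $s+p_v(u)\le\delta_v+1$ because the $\delta_v$ ports are distributed among $s$ port-vertices. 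Hence $\Delta(T)\le d+1=\lceil\frac{10-\chi}{4}\rceil+1$, as required. The decisive point is that the single $X$\!-vertex permitted in each bag contributes at most one extra terminal, which is exactly what keeps the excess down to one.
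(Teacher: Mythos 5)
Your argument covers only part~(iii) of Theorem~\ref{S}, and for that part it is correct and follows essentially the paper's route: pass to a $4$-connected $X$\!-minor $M$ via Theorem~\ref{main}~(i), apply Statement~\ref{Ellingham} to $M$ (which still embeds on the surface), and lift the spanning tree through the certificate. Your degree count is a legitimate variant of the paper's: the paper first builds, inside each bag, a tree spanning only the ports (with the sharper bound $\deg \le d_T(v)$, via its Observation on $V'_v$-spanning trees) and then attaches the root vertex by a pendant path, costing the $+1$ at a single attachment vertex; you instead include the root vertex as an extra terminal of a minimal Steiner tree and absorb it into the count $\deg_{S_v}(u)+p_v(u)\le \delta_v+1$. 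Both are sound and yield the same bound.

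The genuine gap is that parts~(i) and~(ii) --- the $X$\!-spanning Hamiltonian-type statements extending Sanders and Thomas--Yu --- are not addressed at all, and they cannot be obtained by the same reduction. If you apply Statement~\ref{Sanders} or~\ref{ToYu} to a $4$-connected $X$\!-minor $M$ and lift the resulting path or cycle through the bags, a root vertex $x\in V_v$ need not lie on the lifted path inside $G[V_v]$; attaching it by a pendant path destroys the path/cycle structure, so this method only yields \emph{generalized} $X$\!-spanning paths and cycles (this is exactly the paper's Observation~\ref{xspann-obs}). A topological $4$-connected $X$\!-minor, which would avoid this problem, does not exist in general (Observation~\ref{thm1-obs2}). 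The paper therefore proves (i) and~(ii) by a separate induction on $|V(G)|$, working directly in $G$ with Tutte paths and Tutte cycles (Lemmas~\ref{Sanders-path} and~\ref{ToYu-2cycle-lemma}) and using $\kappa_G(X)\ge 4$ to show that any bridge of a Tutte path containing a vertex of $X$ as an inner vertex can be replaced by a smaller counterexample. You would need to supply an argument of this kind; the minor machinery alone does not suffice.
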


Theorem~\ref{S}~(i) extends Statement~\ref{Sanders} showing that there exists an $X$\!-spanning path connecting two vertices from $X$. 
We want to add here that 
Theorem~\ref{S}~(i) does not hold if $x_1\in V(G)\setminus X$: take a planar graph $H$ containing $X$ such that 
$\kappa_H(X)\ge 4$. Let $x_2\in X$ and let $G$ be obtained from $H$ by adding a pending path $P$ connecting $x_1$ and $x_2$.
Clearly, $\kappa_G(X)\ge 4$ but every path in $G$ connecting $x_1$ and $x_2$ is $P$. 

In contrast to the setting of Statement~\ref{Sanders}, Theorem~\ref{S}~(i) does not imply that there exists an $X$\!-spanning cycle. 
This case is covered by Theorem~\ref{S}~(ii) showing that planar graphs with $\kappa_G (X)\ge 4$ contain cycles through all vertices of $X$, all but one and all but two vertices. 
The set  $Y$ of at most two vertices of $G$ can be chosen arbitrarily (not necessarily from $X$). 
If $Y$ is a set of two vertices from $X$, this can be considered as an extension of Statement~\ref{ToYu}. 

Finally, Theorem~\ref{S}~(iii) is a local version of Statement~\ref{Ellingham}. 
We will see later in its proof that we use minors to build a base tree. Then, we will connect all missed vertices from $X$ to the base tree within the bags of the minor. 
That can lead in some cases to an increase of vertex degrees by 1, resulting in the ``$+1$'' in  Theorem~\ref{S}~(iii) in contrast to Statement~\ref{Ellingham}. 

\bigskip

The paper is organized as follows. In Section~\ref{M}, we introduce the concept of $X$\!-minors of $G$ for $X\subseteq V(G)$ and formulate Theorem~\ref{main} as the main result of the present paper. 
This statement is proved in Section~\ref{P3} and is used later as an auxiliary result for the proof of parts of Theorem~\ref{f3} and of Theorem~\ref{S}; however, Theorem~\ref{main} itself is an interesting contribution to the theory of rooted minors of graphs. 
In Section~\ref{P12}, the proofs of Theorems~\ref{f3} and~\ref{S} are presented by making use of Theorem~\ref{main} and the theory of Tutte paths in $2$-connected plane graphs.

\section{$\boldsymbol{X}$\!-Minors}\label{M}

Let $G$ be a graph and $\mathcal{M}$ be a family of pairwise disjoint subsets of
$V(G)$ such that these sets --- called \emph{bags} --- are non-empty and for each bag $A\subseteq V(G)$ the subgraph $G[A]$ induced by $A$ in $G$ is connected.
Let the bags of $\mathcal{M}$ be represented by the vertex set $V(M)$ of a graph $M$, then we say $\mathcal{M}=(V_v)_{v\in V(M)}$ is an \emph{$M$-certificate}
and $M$ is a \emph{minor} of $G$ if
there is an edge of $G$ connecting two bags $V_u$ and $V_v$ of $\mathcal{M}$ for every $uv\in E(M)$. 
As an equivalent definition (see~\cite{diestel2017graph}), a graph $M$ is a minor of a graph $G$ if it is isomorphic to a graph that can be obtained from a subgraph of $G$ by contracting edges.

In this section, we want to keep a set $X\subseteq V(G)$ of \emph{root vertices} alive in the minors. Therefore, we extend the concept of minors and introduce \emph{rooted minors}. \\
For  adjacent vertices $v, y\in V(G)$, let $G/vy$ denote the graph obtained from $G$ by removing $y$ and by adding a new edge $vz$
for every $z$ such that $yz\in E(G)$ and $vz\notin E(G)$.
That is, the edge $vy$ is \emph{contracted} into the vertex $v$ stated first (multiple edges do not occur); this is different from the standard notion of contraction, where a new
artificial vertex $z_{vy}$ is introduced as to replace both $v$ and $y$.
We call an edge $vy$ of $G$ \emph{$X$\!-legal} if $y\notin X$.
While this distinguishes $vy$ from $yv$, both notions refer to the same undirected edge.

A graph $M$ is a \emph{minor of $G$ rooted at $X$} or, shortly, an \emph{$X$\!-minor of $G$} if it can be obtained from a subgraph of $G$ containing $X$ by a (possibly empty) sequence of contractions of $X$\!-legal edges. 
Lemma~\ref{lem:Xminor} shows that there is an equivalent definition of a minor of $G$ rooted at $X$ by using certificates:

\begin{lemma}\label{lem:Xminor}
Let $G$ be a graph and $X\subseteq V(G)$. 
If $M$ is a graph with $X\subseteq V(M)$ and there is an $M$-certificate $\mathcal{M}=(V_v)_{v\in V(M)}$ of $G$, 
then $M$ is an $X$\!-minor of $G$ if and only if 
$v\in V_v$ for all $v\in V(M)$.
\end{lemma}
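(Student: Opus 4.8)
The plan is to prove the two implications separately, the bridge in both directions being a translation between $X$\!-legal contraction sequences and $M$-certificates in which every index lies in its own bag. I would first record the observation that drives everything: if $(V_v)_{v\in V(M)}$ is an $M$-certificate with $v\in V_v$ for all $v$, then since the bags are pairwise disjoint and $X\subseteq V(M)$, each bag contains \emph{at most one} vertex of $X$, and whenever it contains one, that vertex is its own index. Indeed, if some $x\in X$ lies in a bag $V_v$, then $x\in V_x$ as well (because $x\in X\subseteq V(M)$ forces $x$ to be an index and $v\in V_v$ gives $x\in V_x$), so disjointness yields $v=x$. This is exactly the property that will allow one to collapse each bag onto its index without ever deleting a vertex of $X$.

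For the direction ``certificate with $v\in V_v$ $\Rightarrow$ $X$\!-minor'', I would build a subgraph $G'\subseteq G$ by taking, inside each bag, the edges of a spanning tree of the connected graph $G[V_v]$, and, for every $uv\in E(M)$, exactly one edge of $G$ joining $V_u$ and $V_v$ (such an edge exists since $\mathcal{M}$ is an $M$-certificate). Then $X\subseteq V(G')$, because each $x\in X$ lies in $V_x\subseteq V(G')$. Inside each bag I root its spanning tree at the index $v$ and repeatedly contract a leaf $\ell\neq v$ into its neighbour $p$ toward $v$ via $G/p\ell$; by the observation above $\ell\notin X$, so every such step is $X$\!-legal. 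After every bag has been collapsed to its index, the no-multi-edge convention of $G/xy$ leaves precisely the edges contributed by the chosen representatives, that is, exactly $E(M)$; hence $M$ is obtained from the subgraph $G'\supseteq X$ by $X$\!-legal contractions and is an $X$\!-minor of $G$.

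For the converse ``$X$\!-minor $\Rightarrow$ certificate with $v\in V_v$'', I would start from a subgraph $G_0\supseteq X$ and a sequence of $X$\!-legal contractions producing $M$, and define $V_v$ to be the set of vertices of $G_0$ eventually merged into the surviving vertex $v$. Since each step $G/xy$ retains the genuine vertex $x$ and deletes only $y\notin X$, the surviving vertices are vertices of $G$, no vertex of $X$ is ever deleted, and $v\in V_v$ holds by construction; moreover the resulting bags are non-empty, pairwise disjoint, induce connected subgraphs (contraction merges along edges), and realise every edge of $M$. Thus $(V_v)_{v\in V(M)}$ is an $M$-certificate with the required index-in-bag property, which is what the certificate-based reformulation of ``$X$\!-minor'' asserts.

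The only genuinely delicate points are the opening observation linking the index-in-bag condition to $X$\!-legality, and the verification that collapsing the bags produces exactly $M$ — neither too few edges (guaranteed by the edge between $V_u$ and $V_v$ for each $uv\in E(M)$) nor spurious ones (guaranteed by including only tree edges and chosen representatives in $G'$). Both are handled by the disjointness bookkeeping above, so I expect no serious obstruction beyond this careful, case-free edge accounting.
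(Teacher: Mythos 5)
Your proposal is correct and follows essentially the same route as the paper: for the forward direction you collapse each bag onto its index by $X$\!-legal contractions (using the observation that $v\in V_v$ together with disjointness forces $V_v\setminus\{v\}$ to be $X$\!-free), and for the converse you define $V_v$ as the set of original vertices eventually merged into $v$. The only differences — taking a spanning tree of each $G[V_v]$ and a single representative edge per edge of $M$ rather than keeping all of $G[V_v]$ and all cross-bag edges — are cosmetic.
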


\begin{proof}[of Lemma~\ref{lem:Xminor}]
Suppose $M$ and $\mathcal{M}$ fulfil $v\in V_v$ for all $v\in V(M)$.
Then $G'=G[\bigcup_{v\in V(M)}V_v]$ is a subgraph of $G$. We obtain a subgraph $G''$ of $G'$ by removing all edges between $V_v$ and $V_w$ for all distinct $v,w\in V(M)$ with $vw\notin E(M)$. 
Starting with $G''$ and  repeatedly contracting $X$\!-legal edges $vy$  with $v\in V(M)$ and $y\in V_v\setminus\{v\}\subseteq V(G)\setminus X$ as long as there is $v\in V(M)$ with $|V_v|\geq 2$, we obtain $M$.
Hence, $M$ is an $X$\!-minor of $G$.\\
Now, let $M$ be an $X$\!-minor of $G$ obtained from a subgraph $G'$ of $G$ by contracting edges. 
We partition $V(G')$ by defining $V_v$  for every $v\in V(M)$. Let $V_v=\{v\}$ and iteratively add back all vertices $y\in V(G')$ to $V_v$ if $wy$ was contracted to $w\in V_v$. 
Then $\mathcal{M}=(V_v)_{v\in V(M)}$ is an $M$-certificate, $X\subseteq V(M)$, and $v\in V_v$ for $v\in V(M)$. 
\end{proof}

Note that an $\emptyset$-minor of $G$ is a minor of $G$ in the usual sense
whereas a minor of $G$ is isomorphic to some $\emptyset$-minor of $G$.
In this paper  the set $X$ is never empty.

If for an $X$\!-minor $M$ of $G$ there is an isomorphism $\varphi$ from a  subdivision of $M$ into a subgraph  of
$G$ such that all vertices of $M$ are fixed by $\varphi$, 
then $M$ is called a \emph{topological $X$\!-minor} of  $G$.

\bigskip

 In the remainder of this section, we deal with the question whether, for a given graph $G$ and $X\subseteq V(G)$, $G$ has a highly connected $X$\!-minor or even a highly connected topological $X$\!-minor if $\kappa_G(X)$ is large. 
An answer is given by  the forthcoming Theorem~\ref{main};
its proof can be found in Section~\ref{P3}.

\begin{theorem}\label{main}
Let $k\in \{1,2,3,4\}$, $G$ be a graph, and $X\subseteq V(G)$ such that $\kappa_G(X)\ge k$. Then: 
\begin{enumerate}
\item  $G$ has a $k$-connected $X$\!-minor.
\item  If $1\le k\le 3$, then $G$ has a $k$-connected topological $X$\!-minor.
\end{enumerate}
\end{theorem}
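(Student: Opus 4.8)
The plan is to prove both parts of Theorem~\ref{main} by induction on $|V(G)|$, building the desired highly connected $X$\!-minor through a sequence of contractions of $X$\!-legal edges and deletions of irrelevant vertices, always maintaining the invariant $\kappa_{G}(X)\ge k$. The natural base case is when $G[X]$ is already $k$-connected as a graph on the vertex set $X$ itself, or more generally when $|V(G)|=|X|$; here $X$ being non-separable by fewer than $k$ vertices forces $G=G[X]$ to be a $k$-connected $X$\!-minor of itself. For part (ii), when $1\le k\le 3$, the standard fact that every $k$-connected graph on sufficiently many vertices contains a subdivision of a $k$-connected graph (indeed $K_{k+1}$, and more relevantly that $3$-connectedness is equivalent to the existence of a suitable topological structure) will be exploited; the restriction to $k\le 3$ is expected precisely because topological minors and minors coincide for small connectivity but diverge at $k=4$ (a $4$-connected minor need not be a topological $4$-minor, as witnessed by degree constraints).

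The key mechanism of the inductive step is to reduce $|V(G)|$ while preserving $\kappa_G(X)\ge k$. First I would check whether there exists a vertex $y\in V(G)\setminus X$ that is \emph{contractible} in the sense that for some neighbour $x$ of $y$, the $X$\!-legal contraction $G/xy$ still satisfies $\kappa_{G/xy}(X)\ge k$; if so, I pass to the smaller graph $G/xy$ and apply the induction hypothesis, noting that an $X$\!-minor of $G/xy$ is automatically an $X$\!-minor of $G$ by Lemma~\ref{lem:Xminor} and transitivity of the contraction operation. Similarly, any vertex of $V(G)\setminus X$ whose deletion preserves the $X$\!-separator condition can be removed. The heart of the argument is therefore to show that if $\kappa_G(X)\ge k$ and $V(G)\supsetneq X$, then \emph{some} such size-reducing operation is always available unless $G$ already equals the trivial $X$\!-minor $G[X]$; this is where a careful case analysis on small $X$\!-separators enters.

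The main obstacle I anticipate is the \emph{obstruction case}: a vertex $y\in V(G)\setminus X$ for which \emph{every} incident $X$\!-legal contraction destroys the connectivity, i.e.\ for each neighbour $x$ of $y$ the graph $G/xy$ admits an $X$\!-separator of size less than $k$. The classical tool here is the theory of \emph{contractible edges} in $k$-connected graphs, but adapted to the rooted/local setting where only the connectivity of $X$ (not of all of $V(G)$) is controlled. When a contraction $G/xy$ creates an $X$\!-separator $S$ of size $k-1$, the set $S\cup\{v_{xy}\}$ corresponds back to a structured small separation of $G$, and one argues that such separations cannot simultaneously block all neighbours of $y$ without violating $\kappa_G(X)\ge k$ in the original graph. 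For $k\le 3$ this analysis is classical (Tutte-type wheel/contraction lemmas), and the same bookkeeping upgrades the minor to a topological minor by keeping degree-two suppression under control, which explains why part (ii) goes through for $k\le 3$; for $k=4$ one can still obtain a $4$-connected minor in part (i) because minors tolerate the branch-vertex degree blow-up that a topological $4$-minor would not.

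I would organize the write-up by first handling the reductions that shrink $V(G)$ (deletion of redundant non-root vertices, then $X$\!-legal contractions that preserve $\kappa_G(X)\ge k$), then isolating the terminal configuration in which no reduction applies and showing it must be $G[X]$ with $G[X]$ $k$-connected, and finally treating the topological strengthening for $k\le 3$ by a separate suppression argument on subdivision vertices. The delicate step that will consume most of the proof is demonstrating that the obstruction case is impossible, i.e.\ that one can always find a reducible vertex or edge; I expect this to rely on choosing $y$ and its neighbours to minimize or maximize an auxiliary quantity (such as the size of a component of $G-X$ or of a fragment associated to a minimal $X$\!-separator), turning the would-be blocking separators against each other via submodularity of the separator-size function.
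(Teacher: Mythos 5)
Your high-level plan for part (i) with $k=4$ --- iteratively contract $X$\!-legal edges that preserve $\kappa_G(X)\ge 4$ until the graph becomes $4$-connected --- is exactly the paper's strategy, but essentially all of the mathematical content lies in the step you defer: showing that whenever $G$ is not yet $4$-connected, some $X$\!-legal edge $vy$ with $\kappa_{G/vy}(X)\ge 4$ exists. You write that you ``expect'' this to follow from turning the blocking separators against each other via submodularity; that expectation is precisely Lemma~\ref{lem:proofmainthm}, whose proof occupies several pages of fragment analysis (Claims~\ref{claim:kappax}--\ref{claim:TXdisj}): one first shows that an obstruction forces $\kappa_G(V(G))=3$ and $\kappa_G(X)=4$, that every $3$-separator is an anticlique, that every $X$\!-free fragment of a $3$-separator is a single vertex, and only then derives a contradiction from two crossing $X$\!-separators through that vertex. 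None of this is carried out, so the proposal remains a plan rather than a proof. A smaller but genuine error: the terminal configuration of the reduction is not $G[X]$; the process stops at a $k$-connected graph whose vertex set may properly contain $X$, and nothing forces the non-root vertices to disappear.

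For part (ii) the gap is more structural, because the proposed route is at odds with how topological minors behave. An $X$\!-minor obtained by contractions cannot in general be ``upgraded'' to a topological $X$\!-minor by controlling degree-two suppression: a topological $X$\!-minor requires each branch vertex to be present in $G$ with its full degree, and contracting a bag into a root $x$ may give $x$ a degree in $M$ that $x$ simply does not have in $G$ (Observation~\ref{thm1-obs2} exploits exactly this). Your stated reason for the restriction $k\le 3$ --- that minors and topological minors ``coincide for small connectivity'' --- is also not the right one; they coincide for target graphs of maximum degree at most $3$, not for $3$-connected targets. The paper instead proves (ii) by a different reduction that never contracts: take a smallest separator $S$ of $G$ (of size at most $k-1\le 2$), cut off an $X$\!-free $S$-fragment, complete $S$ to a clique, recurse, and finally realize the at most one virtual edge $uv$ on $S$ as a $u$--$v$ path through the deleted fragment. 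This is precisely why the argument works for $k\le 3$ and fails at $k=4$, where $|S|$ could be $3$ and up to three virtual edges would need internally disjoint realizations inside the fragment.
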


\bigskip

In the next three observations, we present examples showing that this theorem is best possible. 

\begin{obs}\label{thm1-obs1}
\text{\normalfont Theorem~\ref{main}~(i)} is best possible, because there are infinitely many (planar) graphs $G$ with the property that $G$ contains  $X\subseteq V(G)$ such that $\kappa_G(X)=6$ and $G$ has no $5$-connected $X$\!-minor.
\end{obs}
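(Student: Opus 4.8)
The plan is to exhibit an explicit infinite family of planar graphs $G_n$ together with a fixed root set $X$ of exactly seven vertices, to verify $\kappa_{G_n}(X)=6$ by hand, and then to forbid a $5$-connected $X$\!-minor by a planar counting argument; this witnesses that Theorem~\ref{main}~(i) cannot be extended to $k=5$, not even under the stronger hypothesis $\kappa_G(X)=6$. Seven roots are the natural choice: since $\kappa_G(X)\le |X|-1$, attaining $\kappa_G(X)=6$ already forces $|X|\ge 7$, and keeping $|X|=7$ makes the non-existence argument as transparent as possible. I would then obtain infinitely many examples by enlarging the non-root ``connector'' gadgets placed between the roots (subdividing or thickening them); such an operation changes neither $\kappa_{G_n}(X)$ nor the obstruction described below, so one genuinely gets an infinite family rather than a single graph.

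For the construction I would take the seven roots pairwise non-adjacent, each of degree exactly $6$, and join them through planar non-root gadgets that realise, for every pair of roots, six internally disjoint paths, the whole picture drawn so as to remain planar. The verification of $\kappa_{G_n}(X)=6$ then splits into two routine checks. First, an $X$\!-separator of size $6$ exists: the six neighbours of any single root form a set whose deletion separates that root from the other six, so that set is an $X$\!-separator. Second, no $X$\!-separator of size at most $5$ exists: exhibiting six internally disjoint $x_i$--$x_j$ paths for every pair of roots and invoking Menger's theorem in its ``separating $X$'' form rules out any cut of size $\le 5$. Together these give $\kappa_{G_n}(X)=\min(6,|X|-1)=6$.

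The crux is to show that no $X$\!-minor $M$ of $G_n$ is $5$-connected. Suppose one existed. As a minor of a planar graph, $M$ is planar, and being $5$-connected it has $\delta(M)\ge 5$ while $|E(M)|\le 3|V(M)|-6$; hence $\sum_{v\in V(M)}(6-\deg_M(v))=6|V(M)|-2|E(M)|\ge 12$, where every summand is at most $1$. Consequently $M$ has at least twelve vertices of degree exactly $5$ and average degree strictly below $6$. At the same time $X\subseteq V(M)$ with the seven roots lying in distinct bags $V_{x_1},\dots,V_{x_7}$, each containing its root. I would derive a contradiction from the rigid planar way the gadgets route the six disjoint paths at each root: the aim is to show that one cannot simultaneously give all seven roots degree at least $5$ in $M$, keep the roots in seven distinct bags, and distribute the forced Euler deficit of twelve degree-$5$ vertices, without either collapsing two roots into a common bag or creating a root of degree at most $4$ --- both of which contradict the assumptions on $M$.

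The main obstacle is this last step, because the statement quantifies over \emph{all} $X$\!-minors, i.e.\ over every choice of spanning subgraph, bag partition, and sequence of $X$\!-legal contractions that keeps the seven roots apart, so the argument must be robust to all of them. This is precisely why the gadget design matters: the connectors must be loose enough that the six-disjoint-paths condition (hence $\kappa_{G_n}(X)=6$) is easy to certify, yet rigid enough that the planar degree-$5$ deficit cannot be realised while keeping the roots separated. Pinning down a gadget that makes this incompatibility forced, and organising the resulting case distinction on how the bags around the roots can absorb connector vertices, is where I expect essentially all of the work to lie.
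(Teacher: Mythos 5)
There is a genuine gap: the central claim of the observation --- that \emph{no} $X$\!-minor of your graphs is $5$-connected --- is never actually proved. You state the Euler-formula consequence that a planar graph with $\delta\ge 5$ has at least twelve vertices of degree exactly $5$, but this alone creates no contradiction whatsoever: $5$-connected planar graphs exist in abundance (the icosahedron, for instance), so planarity plus minimum degree $5$ is perfectly consistent. Everything beyond that is an announced intention (``the aim is to show\dots'', ``pinning down a gadget that makes this incompatibility forced\dots is where I expect essentially all of the work to lie''), i.e.\ you have deferred exactly the step that constitutes the proof. For comparison, the paper's argument on its graphs $G_t$ (with $t=|X|\ge 7$ roots on a $t$-gon and $4t$ non-root vertices of degree $4$) hinges on three concrete steps that have no counterpart in your plan: first, since every non-root vertex has degree $4$ and $\delta(M)\ge 5$, each such vertex must be deleted or have an incident edge contracted, which gives $2a+b\ge 4t$ and hence the upper bound $|V(M)|\le 3t$; second, a local analysis of the neighbourhoods $N^*(x)=N_M(x)\setminus X$ (each of size at least $3$, pairwise almost disjoint) gives the matching lower bound $|V(M)|\ge 3t$; third, in the resulting tight configuration one exhibits an explicit $4$-separator $\{x,x',v,w\}$, contradicting $5$-connectedness. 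Without some analogue of the first counting step --- which is what ties the degree-$4$ connector vertices to a bound on $|V(M)|$ --- your Euler deficit has nothing to push against.

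A secondary problem is the source of ``infinitely many'' examples. You fix $|X|=7$ and propose to enlarge the connector gadgets by subdividing or thickening them, asserting that this ``changes neither $\kappa_{G_n}(X)$ nor the obstruction.'' The connectivity claim is fine, but the obstruction claim is not automatic: a minor of a subdivision of $G$ need not be a minor of $G$ (already $C_4$ is a minor of the $6$-cycle obtained by subdividing a triangle, but not of the triangle), so non-existence of a $5$-connected $X$\!-minor does not transfer from one member of your family to another without rerunning the argument on each graph. The paper sidesteps this by letting $t=|X|$ grow, so that each $G_t$ is handled by the same self-contained computation.
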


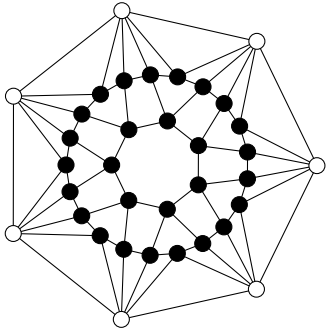
\begin{figure}[h]
\begin{center}
\begin{tikzpicture}[scale=.3][line width=42.5mm]

\foreach \i in {1,...,7}
{
\coordinate  (m\i) at (\i*51.4+25.7:2);
\coordinate  (a\i) at (\i*51.4-8.5:4);
\coordinate  (b\i) at (\i*51.4+8.5:4);
\coordinate  (c\i) at (\i*51.4+25.7:4);
\coordinate  (r\i) at (\i*51.4:7);

\coordinate (rp\i) at (51.4*\i-51.4:7);
\coordinate (cp\i) at (51.4*\i-51.4+25.7:4);
\coordinate  (mp\i) at (\i*51.4-25.7:2);

\draw (m\i) -- (mp\i);
\draw (mp\i) -- (a\i);
\draw (m\i) -- (b\i);

\draw (a\i) -- (b\i);
\draw (b\i) -- (c\i);
\draw (a\i) -- (cp\i);

\draw (a\i) -- (r\i);
\draw (b\i) -- (r\i);
\draw (c\i) -- (r\i);
\draw (r\i) -- (rp\i);
\draw (cp\i) -- (r\i);
}

\foreach \i in {1,...,7}
{
\node  (a\i) [circle, draw,fill=black, inner sep=0pt, minimum width=6pt ] at (\i*51.4-8.5:4) {};
\node  (b\i) [circle, draw,fill=black, inner sep=0pt, minimum width=6pt ] at (\i*51.4+8.5:4) {};
\node  (c\i) [circle, draw,fill=black, inner sep=0pt, minimum width=6pt ] at (\i*51.4+25.7:4) {};
\node  (m\i) [circle, draw,fill=black, inner sep=0pt, minimum width=6pt ] at(\i*51.4+25.7:2)  {};
\node  (r\i) [circle, draw,fill=white, inner sep=0pt, minimum width=6pt ] at (\i*51.4:7) {};
}
\end{tikzpicture}
\end{center}
\caption{The graph $G_7$. }\label{fig:g7}
\end{figure}

\begin{smallproof}
For an integer $t\ge 7$,  the graph $G_7$ of Figure~\ref{fig:g7} can  be readily generalized  to a plane graph $G_t$ containing a set $X$ of $t$ white vertices of degree $6$ forming a $t$-gon of $G_t$ and $4t$ black vertices of degree $4$ such that $\kappa_{G_t}(X)= 6$.
That $\kappa_{G_t}(X)= 6$ can be seen since any two nonadjacent
vertices of $X$ are connected by two subpaths of the outer cycle, two paths that use subpaths
of the middle cycle, and two paths that cross the middle cycle and use subpaths of the inner
cycle. 
The assertion is proved, if there is no $5$-connected $X$\!-minor  $M$ of $G_t$.

 Assume that $M$ exists and that $M$ is obtained  from a subgraph $H$ of $G_t$ by contractions of $X$\!-legal edges. If $|V(G_t)\setminus V(H)|=b$, then we can say that $M$ is obtained from $G_t$ by a number $a$ of contractions of $X$\!-legal edges and by $b$  removals of vertices not belonging to $X$. If an $X$\!-legal edge $vy$ is contracted or a vertex $z\notin X$  is removed, then the degree of a vertex distinct from $v,y$ or distinct from $z$, respectively, does not increase. Since $G_t$ has $4t$ vertices of degree $4$ and the minimum degree $\delta(M)$ of $M$ is at least $5$, each black vertex either must be  removed or an incident edge must be contracted. Thus, it follows $2a+b\ge 4t$ implying $a+b\ge 2t$. Because $n=|V(M)|=|V(G_t)|-(a+b)=5t-(a+b)$, we obtain $n\le 3t$. 

 Note that $M$, as an $X$\!-minor of a planar graph, is planar. Since $M$ is $5$-connected, it has, up to the choice of the outer face, a unique embedding into the plane. It is clear (consider the drawing of $G_7$ in Figure~\ref{fig:g7}) that
 the vertices of $X$ remain boundary vertices of a $t$-gon $\alpha$ of  such an embedding of $M$ into the plane.
For a vertex $x\in X$, let $N_M(x)$ be the set  of neighbors of $x$ in $M$, $|N_M(x)|\ge 5$. Furthermore,  $|N^*(x)|\ge 3$ for  $N^*(x)=N_M(x)\setminus X$ and $x\in X$, because otherwise the boundary cycle of $\alpha$ has a chord incident with $x$ and the end vertices of this chord form a separator of $M$,
contradicting the  $3$-connectedness, and therefore also the $5$-connectedness of $M$. If $N^*(x_1)\cap N^*(x_2)\neq \emptyset$ for non-adjacent $x_1,x_2\in X$, then $S=\{x_1,x_2,u\}$ with $u\in N^*(x_1)\cap N^*(x_2)$ is a separator of $M$, a contradiction. For the same reason $|N^*(x_1)\cap N^*(x_2)|\le 1$ for adjacent $x_1,x_2\in X$, and if $N^*(x_1)\cap N^*(x_2)=\{u\}$, then $x_1,x_2,$ and $u$ are the boundary vertices of a $3$-gon of $M$. It follows 
$$n=|V(M)|\ge |X|+|\bigcup_{x\in X}N^*(x)|\ge t+\sum_{x\in X}(|N^*(x)|-1)\ge 3t.$$
All together, $n=3t$, $V(M)= X\cup \allowbreak\bigcup_{x\in X}N^*(x)$, $|N^*(x)|=3$ for $x\in X$, $|N^*(x_1)\cap N^*(x_2)|= 0$ for non-adjacent $x_1,x_2\in X$, $|N^*(x_1)\cap N^*(x_2)|= 1$ for adjacent $x_1,x_2\in X$, and if $N^*(x_1)\cap N^*(x_2)=\{u\}$ in this case, then $x_1,x_2,$ and $u$ are the boundary vertices of a $3$-gon of $M$.\\
For $v\in \bigcup_{x\in X}N^*(x)$, it holds $|N_M(v)\cap \allowbreak X|\le 2$, thus, $|N_M(v)\cap \allowbreak (V(M)\setminus X)|=|N_M(v)\cap \allowbreak (\bigcup_{x\in X}N^*(x))|\ge 3$ and it is checked readily  that $v$ has a neighbor $w\in N^*(x')$ such that $x\neq x'$ and $\{x,x',v,w\}$ is a separator of $M$, a contradiction to the $5$-connectedness of $M$.  \end{smallproof}

\begin{obs}\label{thm1-obs2}
\text{\normalfont Theorem~\ref{main}~(ii)} is best possible, because for an arbitrary integer $l$, there is a (planar) graph $G$ and $X\subseteq V(G)$ with $\kappa_G(X)\ge l$ such that
every  topological $X$\!-minor of $G$ is not $4$-connected.
\end{obs}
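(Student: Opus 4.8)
The plan is to build, for each integer $l$, a planar host graph $G$ together with a set $X$ of high-degree ``hub'' vertices so that $\kappa_G(X)\ge l$, yet the degrees outside $X$ are kept low enough to forbid $4$-connected topological $X$\!-minors. The guiding principle is the fundamental gap between minors and topological minors: a contraction may merge several low-degree vertices into a single bag of large degree, whereas a branch vertex of a topological minor is a single vertex of $G$ whose degree in the minor is at most its degree in $G$. Consequently, if every vertex of $V(G)\setminus X$ has degree at most $3$, then in any topological $X$\!-minor $M$ each branch vertex lying outside $X$ has degree at most $3$; since a $4$-connected graph has minimum degree at least $4$, such an $M$ can use only vertices of $X$ as branch vertices, forcing $V(M)=X$ and realizing every edge of $M$ by an internally disjoint path whose interior consists of degree-$3$ vertices. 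Note that, by Theorem~\ref{main}~(i) with $k=4$, a $4$-connected $X$\!-\emph{minor} does exist here, so the example genuinely separates (ii) from (i).

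First I would give the construction. Generalising the concentric scheme of the graph $G_t$ from Observation~\ref{thm1-obs1}, I would take $l$ hubs of degree exactly $l$ and join them through a wide, essentially $3$-regular planar ``fabric'' (a honeycomb-type strip drawn on a cylinder) that is broad enough to route $l$ internally disjoint paths between the hubs while every fabric vertex keeps degree $3$. The fabric is chosen so that no $X$\!-separator is smaller than the neighbourhood of a single hub; by Menger this yields $\kappa_G(X)\ge l$, and since the hubs have degree $l$ we in fact obtain $\kappa_G(X)=l$. The delicate point is that large $\kappa_G(X)$ and the degree-$3$ cap on $V(G)\setminus X$ pull in opposite directions, so the fabric must be simultaneously rich enough to carry many disjoint paths and sparse enough to stay subcubic away from the hubs.

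With the construction in place, suppose for contradiction that $G$ has a $4$-connected topological $X$\!-minor $M$. By the degree observation above, $V(M)=X$ and the edges of $M$ correspond to internally disjoint paths through degree-$3$ vertices; in particular each interior vertex lies on at most one such path and retains one unused incident edge. Now $M$ is a $4$-connected \emph{planar} graph, hence $3$-connected, so up to the choice of outer face it has a unique embedding, and in the embedding inherited from the fabric the hubs of $X$ remain the boundary vertices of a distinguished cycle, exactly as in the analysis of Observation~\ref{thm1-obs1}. I would then run a separator count on this embedding: the scarcity of private neighbours forced by the subcubic interior makes two hubs share a common path-neighbour or produces a short chord, and the corresponding two or three vertices form a separator of $M$, contradicting its $4$-connectedness.

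The step I expect to be the main obstacle is precisely this last one---converting ``the interior is subcubic'' into an actual separator of size at most $3$ in \emph{every} candidate $M$---because it must hold uniformly over all possible path systems on the fixed host $G$, while being reconciled with $\kappa_G(X)\ge l$. Getting the two to coexist is the crux: the fabric must supply the disjoint paths demanded by $\kappa_G(X)\ge l$ but never enough room for the branch set $X$ to be wired into a $4$-connected topological minor. Once the separator argument is established for the explicit family, letting $l$ grow produces the infinitely many planar examples claimed in Observation~\ref{thm1-obs2}.
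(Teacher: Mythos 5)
Your construction and the first half of your analysis coincide with the paper's: the host graph $F_l$ in the paper is exactly a subcubic planar ``fabric'' (an $l$-row grid) with hubs of degree $l$ attached along one face, and the paper draws the same key conclusion you do, namely that every branch vertex of a $4$-connected topological $X$\!-minor $M$ must have degree at least $4$ in the host, hence $V(M)=X$. The gap is in your final step, which you yourself flag as the main obstacle: the ``separator count'' is never carried out, and as sketched it does not fit this situation. You propose to find two hubs sharing a common path-neighbour or a short chord, imitating the neighbour-counting from Observation~\ref{thm1-obs1}; but there the sets $N^*(x)=N_M(x)\setminus X$ were nontrivial because non-root vertices could survive as branch vertices of the minor, whereas here $V(M)=X$ forces $N_M(x)\subseteq X$ for every $x$, so there are no path-neighbours outside $X$ to count, and it is not clear what separator of size at most $3$ your argument would produce uniformly over all systems of internally disjoint paths.

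The paper closes the argument with a one-line observation that your write-up misses: in $F_l$ all hubs lie on the boundary of a single face, and since the subdivision of $M$ is a subgraph of $F_l$ whose branch-vertex set is exactly $X$, the vertices of $M$ all lie on a common face of $M$ as well; hence $M$ is outerplanar, so $\delta(M)\le 2$, contradicting $\delta(M)\ge 4$. If you add to your construction the (easy to arrange) requirement that all hubs are incident with one common face, this observation replaces your separator count entirely and finishes the proof. Your remark that Theorem~\ref{main}~(i) still supplies a $4$-connected $X$\!-minor of the same graph is correct and a nice complement, and your claim $\kappa_G(X)\ge l$ is plausible for a grid-like fabric but would also need the explicit routing of $l$ disjoint paths that the paper's $l$-row grid provides.
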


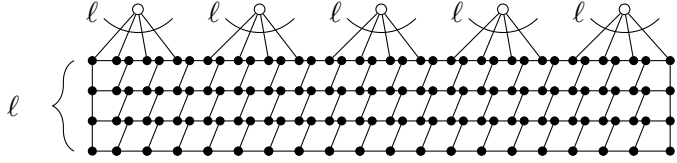
\begin{figure}[h]
\begin{center}
\begin{tikzpicture}[scale=.4,y=-1cm][line width=42.5mm]

\node (anker) at (-2.7,-2) {};

\foreach \y in {0,...,3}
{
\node  (0P\y a) [circle, draw,fill=black, inner sep=0pt, minimum width=3pt ] at (0,\y) {};
\node  (19P\y a) [circle, draw,fill=black, inner sep=0pt, minimum width=3pt ] at (19,\y) {};
}
\draw (0P0a) -- ($(0P3a)$);
\draw (19P0a) -- ($(19P3a)$);

\foreach \x in {1,...,18}
{
\foreach \y in {0,...,2}
{
\node  (\x P\y a) [circle, draw,fill=black, inner sep=0pt, minimum width=3pt ] at ( $(\x,\y)+(-.2,0)$) {};
\node  (\x P\y b) [circle, draw,fill=black, inner sep=0pt, minimum width=3pt ] at ( $(\x,\y)+(.2,0)$) {};
}
\node  (\x P3a) [circle, draw,fill=black, inner sep=0pt, minimum width=3pt ] at  ( $(\x,3)+(-.2,0)$) {};
\node  (\x P3b) [circle, draw,fill=black, inner sep=0pt, minimum width=3pt ] at  (\x P3a) {};

\foreach \y [evaluate=\y as \yn using int(\y+1)] in {0,...,2}
{
\draw ($(\x P\yn a)$) -- (\x P\y b);
}
}

\foreach \y in {0,...,2}
{
\draw (0P\y a) -- ($(1P\y a)$);
\draw (1P\y a) -- ($(18P\y b)$);
\draw (18P\y b) -- ($(19P\y a)$);
}
\draw (0P3a) --  ($(1P3a)$);
\draw (1P3a) -- ($(18P3b)$);
\draw (18P3b) --  ($(19P3a)$);

\foreach \i in {0,4,8,12,16} {
\node  (v\i) [circle, draw,fill=white, inner sep=0pt, minimum width=4pt ] at ($(\i,0)+(1.5,-1.7)$) {};
}

\foreach \x in {0,...,3}
{
\draw (\x P0a) -- (v0);
}

\foreach \x in {4,...,7}
{
\draw (\x P0a) -- (v4);
}
\foreach \x in {8,...,11}
{
\draw (\x P0a) -- (v8);
}
\foreach \x in {12,...,15}
{
\draw (\x P0a) -- (v12);
}

\foreach \x in {16,...,19}
{
\draw (\x P0a) -- (v16);
}

\foreach \i in {0,4,8,12,16} {
\draw ($(v\i)+(1.14,.3)$) arc (45:135:1.6);
\node (t) at ($(v\i)+(-1.5,.1)$) {$l$};
}

\draw [decorate,decoration={brace,mirror,amplitude=8pt}]
 ($(0P0a)+(-.6,0)$)  --  ($(0P3a)+(-.6,0)$) node [black,midway,xshift=-.8cm] {$l$};

\end{tikzpicture}
\end{center}
\caption{The graph $F_l$ (number of white vertices is larger than $l$). }\label{fig:Fl}
\end{figure}

\begin{smallproof}
For $l\ge 4$ consider the graph $F_l$ of Figure~\ref{fig:Fl} and let $X$ be the set of white vertices of $F_l$ with $|X|\geq l+1$.
The vertices of $X$ have degree $l\ge 4$ and all black vertices have degree at most $3$ in $F_l$. Moreover, it is easy to see that
$\kappa_{F_l}(X)= l$. Suppose, to the contrary, that there is  a $4$-connected topological $X$\!-minor $M$ of $F_l$ and an isomorphism $\varphi$ from a subdivision of $M$ into a subgraph $H$ of $F_l$. 
Then each vertex $v\in V(M)$ is a vertex of $H$ and has degree at least $4$ in $H$ and, therefore, also in $F_l$, thus,
$v\in X$. Since $X\subseteq V(M)$ it follows $X=V(M)$. The vertices of $X$ are boundary vertices of a common face in $F_l$, hence, also in $M$.
Consequently, $M$ is a simple outerplanar graph implying $\delta(M)=2$, a contradicting $\delta(M)\ge 4$.
\end{smallproof}

This shows also, that there cannot be any integer $l$ such that $\kappa_G(X)\ge l$ implies the existence of a $4$-connected topological $X$\!-minor. 

By the first example,  it remains open whether an integer $l$ exists --- it must be at least $7$ --- such that every graph $G$ containing  $X\subseteq V(G)$ with $\kappa_G(X)\ge l$ has a $5$-connected $X$\!-minor.
We  conclude this section by showing:

\begin{obs}\label{thm1-obs3}
There cannot be any integer $l$ such that $\kappa_G(X)\ge l$ implies the existence of a $6$-connected $X$\!-minor.
\end{obs}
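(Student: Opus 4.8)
The plan is to exploit the simplest possible obstruction to high connectivity among minors, namely planarity. A simple planar graph on at least three vertices satisfies $|E|\le 3|V|-6$ and hence has a vertex of degree at most $5$; consequently no planar graph can be $6$-connected. Since an $X$\!-minor of $G$ is in particular a minor of $G$, an $X$\!-minor of a planar graph is again planar. Therefore it suffices to produce, for every integer $l$, a planar graph $G$ together with a set $X\subseteq V(G)$ satisfying $\kappa_G(X)\ge l$: any such $G$ automatically fails to have a $6$-connected $X$\!-minor, and letting $l$ grow shows that no threshold $l$ can force one.

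First I would simply reuse the family $F_l$ introduced in the proof of Observation~\ref{thm1-obs2}. There it is already verified that $F_l$ is planar and that its set $X$ of white vertices satisfies $\kappa_{F_l}(X)=l$, so no new construction is needed; for each $l$ this delivers a planar graph with $\kappa_{F_l}(X)\ge l$ as required.

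Next I would spell out the contradiction. Suppose $M$ were a $6$-connected $X$\!-minor of $F_l$. By the remark above, $M$ is planar, and being $6$-connected it has at least seven vertices and minimum degree at least $6$. This contradicts the fact that every simple planar graph on at least three vertices has a vertex of degree at most $5$. Hence $F_l$ has no $6$-connected $X$\!-minor, and since $l$ was arbitrary, the claim follows.

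The main point to get right is conceptual rather than technical: the entire difficulty collapses once one observes that $6$-connectivity is incompatible with planarity, while $\kappa_G(X)$ can nonetheless be made arbitrarily large inside planar graphs. I expect no genuine obstacle in the argument itself; the only thing requiring care is that the already-constructed graphs $F_l$ attain the claimed large value of $\kappa_{F_l}(X)$, which is precisely what was established in Observation~\ref{thm1-obs2}.
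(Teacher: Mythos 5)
Your proof is correct and follows essentially the same route as the paper: both arguments rest on the observation that planar graphs cannot be $6$-connected while $\kappa_G(X)$ can be made arbitrarily large in planar graphs. The only difference is the choice of witness family --- the paper introduces a new planar graph $H_l$ (Figure~\ref{fig:Hl}) with $\kappa_{H_l}(X)=l$, whereas you reuse $F_l$ from Observation~\ref{thm1-obs2}, which works equally well since $\kappa_{F_l}(X)=l$ was already established there.
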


\begin{figure}[h]
\begin{center}
\begin{tikzpicture}[scale=.4,y=-1cm][line width=42.5mm]

\foreach \x in {0,...,15}
{
\foreach \y in {0,...,5}
{
\node  (\x P\y) [circle, draw,fill=black, inner sep=0pt, minimum width=3pt ] at (\x,\y) {};
}
\draw (\x P0) -- ($(\x P5)+(0,.3)$);
\draw[dotted] ($(\x P5)+(0,.5)$) -- ($(\x P5)+(0,1)$);

\node  (\x P6) [circle, draw,fill=black, inner sep=0pt, minimum width=3pt ] at ($(\x P5)+(0,1.6)$) {};
\node  (\x P7) [circle, draw,fill=black, inner sep=0pt, minimum width=3pt ] at ($(\x P5)+(0,2.6)$) {};
\draw ($(\x P6)+(0,-.3)$) -- (\x P7);
}

\foreach \y in {0,...,7}
{
\draw (0P\y) -- ($(15P\y)+(.5,0)$);
\draw[dotted] ($(15P\y)+(1,0)$) -- ($(15P\y)+(1.7,0)$);

\node  (16P\y) [circle, draw,fill=black, inner sep=0pt, minimum width=3pt ] at ($(15P\y)+(2.7,0)$) {};
\node  (17P\y) [circle, draw,fill=black, inner sep=0pt, minimum width=3pt ] at ($(15P\y)+(3.7,0)$) {};
\node  (18P\y) [circle, draw,fill=black, inner sep=0pt, minimum width=3pt ] at ($(15P\y)+(4.7,0)$) {};
\draw ($(16P\y)+(-.5,0)$) -- (18P\y);
}

\foreach \x in {16,...,18}
{
\draw (\x P0) -- ($(\x P5)+(0,.3)$);
\draw[dotted] ($(\x P5)+(0,.5)$) -- ($(\x P5)+(0,1)$);

\node  (\x P6) [circle, draw,fill=black, inner sep=0pt, minimum width=3pt ] at ($(\x P5)+(0,1.6)$) {};
\node  (\x P7) [circle, draw,fill=black, inner sep=0pt, minimum width=3pt ] at ($(\x P5)+(0,2.6)$) {};
\draw ($(\x P6)+(0,-.3)$) -- (\x P7);
}

\node  (v0) [circle, draw,fill=white, inner sep=0pt, minimum width=4pt ] at ($(0P0)+(2.5,-1.7)$) {};
\node  (v1) [circle, draw,fill=white, inner sep=0pt, minimum width=4pt ] at ($(6P0)+(2.5,-1.7)$) {};
\node  (v2) [circle, draw,fill=white, inner sep=0pt, minimum width=4pt ] at ($(12P0)+(2.5,-1.7)$) {};
\node  (vk) [circle, draw,fill=white, inner sep=0pt, minimum width=4pt ] at ($(18P0)+(-2.5,-1.7)$) {};

\foreach \x in {0,...,5}
{
\draw (\x P0) -- (v0);
}
\foreach \x in {6,...,11}
{
\draw (\x P0) -- (v1);
}
\foreach \x in {12,...,15}
{
\draw (\x P0) -- (v2);
}
\foreach \x in {16,...,18}
{
\draw (\x P0) -- (vk);
}
\draw[dotted,black] ($(v2)+(.8,0)$) -- ($(vk)+(-.8,0)$);

\foreach \i in {0,...,1}
{
\draw ($(v\i)+(1.7,.3)$) arc (45:135:2.4);
\node (t) at ($(v\i)+(-2.1,.1)$) {$l$};
}
\draw ($(v2)+(.65,.95)$) arc (75:135:2.4);
\node (t) at ($(v2)+(-2.1,.1)$) {$l$};
\draw ($(vk)+(1.7,.3)$) arc (45:105:2.4);
\node (t) at ($(vk)+(2.1,.1)$) {$l$};

\draw [decorate,decoration={brace,mirror,amplitude=12pt}]
 ($(0P0)+(-.6,0)$)  --  ($(0P7)+(-.6,0)$) node [black,midway,xshift=-.8cm] {$l$};

\draw [decorate,decoration={brace,amplitude=12pt}]
 ($(0P0)+(0,-2.5)$)  --  ($(18P0)+(0,-2.5)$) node [black,midway,yshift=0.8cm] {$l\cdot (l+1)$};
\end{tikzpicture}
\end{center}
\caption{The graph $H_l$. }\label{fig:Hl}
\end{figure}
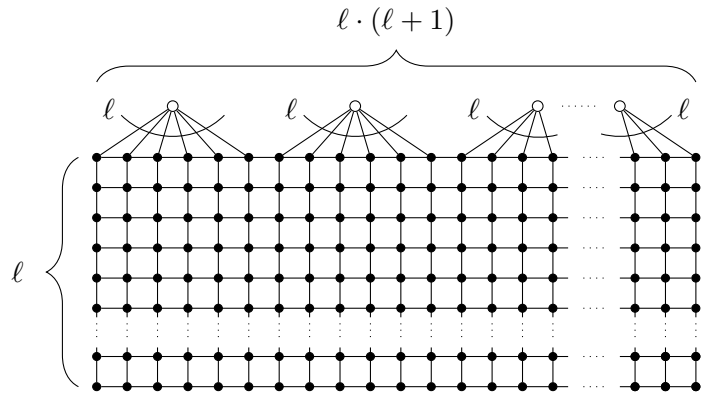

\begin{smallproof}
 Let $l\ge 6$ and consider the planar graph $H_l$ of Figure~\ref{fig:Hl}. It contains a set $X$ of $l+1$ white vertices of degree $l$
and further $l^2(l+1)$ black vertices. It is easy to see that $\kappa_{H_l}(X)=l$. An arbitrary $X$\!-minor $M$ of $H_l$ is also planar and, since
it contains $X$, it has at least $l+1\ge 7$ vertices. 
It is known that planar graphs are not $6$-connected. 
\end{smallproof}

\section{Proof of Theorem~\ref{main}}\label{P3}

In this section, we prove Theorem~\ref{main}. 
The following Lemma~\ref{Lemma1} --- as a consequence of Menger's Theorem~\cite{bohme2001menger,menger1927allgemeinen} --- and Lemma~\ref{Lemma2} will be used several times.

\begin{lemma}\label{Lemma1} Let $G$ be a graph, $X\subseteq V(G)$, $k\geq 1$, and $|X|\geq k+1$.\\
Then $\kappa_G (X)\ge k$ if and only if for every $x,y\in X$ with $xy\notin E(G)$ there are $k$ internally
vertex disjoint paths connecting $x$ and $y$. \end{lemma}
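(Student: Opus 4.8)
The plan is to derive both implications directly from the vertex version of Menger's theorem, after first translating the hypothesis into a statement about cuts. Under the assumption $|X|\ge k+1$ we have $k\le |X|-1$, so the clause $\kappa_G(X)\le |X|-1$ in the definition of $\kappa_G(X)$ is automatically satisfied; consequently $\kappa_G(X)\ge k$ is equivalent to the single assertion that \emph{every} $X$-separator of $G$ has at least $k$ vertices (vacuously true if none exists). Recording this equivalence first keeps the rest of the argument a clean dictionary between $X$-separators and $x$-$y$ separators.

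For the forward direction I would fix $x,y\in X$ with $xy\notin E(G)$ and take a minimum $x$-$y$ separator $S$, i.e.\ a set with $x,y\notin S$ whose removal places $x$ and $y$ in different components of $G-S$. Since $x,y\in X$, these two components each contain a vertex of $X$, so $S$ is an $X$-separator and hence $|S|\ge k$ by hypothesis. Menger's theorem then delivers $k$ internally vertex disjoint $x$-$y$ paths. This simultaneously disposes of the degenerate case $S=\emptyset$: an empty $X$-separator would contradict $\kappa_G(X)\ge k\ge 1$, so in fact all of $X$ lies in one component and a genuine separator always exists.

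For the converse I would start from an arbitrary $X$-separator $S$ and choose $x,y\in X$ lying in two distinct components of $G-S$. Because $x,y\notin S$ and an edge $xy$ would keep the endpoints in a common component of $G-S$, the vertices $x$ and $y$ are non-adjacent, so the hypothesis supplies $k$ internally vertex disjoint $x$-$y$ paths. As $S$ disconnects $x$ from $y$, each such path must use an internal vertex of $S$, and internal disjointness forces these vertices to be pairwise distinct; hence $|S|\ge k$. Since this holds for every $X$-separator, and $|X|\ge k+1$ secures $k\le|X|-1$, we obtain $\kappa_G(X)\ge k$.

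I do not expect a serious obstacle here, as the whole argument is a back-and-forth translation between the two notions of cut. The only points demanding care are the precise reading of the definition of $\kappa_G(X)$ (notably the vacuous case where no $X$-separator exists, together with the role of the bound $|X|-1$) and the observation that the two $X$-vertices separated by a cut are necessarily non-adjacent, which is exactly the hypothesis under which Menger's theorem is applicable.
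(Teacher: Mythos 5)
Your proof is correct and is exactly the argument the paper intends: the paper states Lemma~\ref{Lemma1} without proof, simply as ``a consequence of Menger's Theorem,'' and your write-up is the standard translation between $X$\!-separators and $x$-$y$ separators that this citation presupposes. The care you take with the clause $\kappa_G(X)\le|X|-1$ and with the vacuous/empty-separator cases is appropriate and introduces no gap.
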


Let $S$ be an $X$\!-separator of $G$, the union $F$ of the vertex sets of at least one but not of all components of $G-S$ is called an \emph{$S$-$X$\!-fragment}, if both $F$ and $\overline{F}:=V(G-S)\setminus F$ contain at least one vertex from $X$. 
In this case,  $\overline{F}$ is an $S$-$X$\!-fragment, too.\\
For an $S$-$V(G)$-fragment $F$, we again drop the $V(G)$ in the notion; thus, $F$ is an \emph{$S$-fragment} for a separator $S$ of $G$. 
We say that  some set $Y\subseteq V(G)$ is \emph{$X$\!-free} if $Y\cap X=\emptyset$.

\begin{lemma}\label{Lemma2} Let $G$ be a  graph, $S\subset V(G)$ be a separator of $G$, and $F$ be an $X$\!-free $S$-fragment of $G$. Furthermore, let $G'$ be the graph obtained from  $G[\overline{F}\cup S]$ by adding all possible edges between vertices of $S$ (if not already present).\\
Then  $\kappa_{G'}(X)\ge \kappa_G(X)$. \end{lemma}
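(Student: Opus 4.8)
The plan is to reduce the statement to Menger's characterization of Lemma~\ref{Lemma1} and then to ``reroute'' an optimal path system through the clique on $S$. Write $k := \kappa_G(X)$; we may assume $k \geq 1$, since $k=0$ makes the claim trivial. From $k \leq |X|-1$ we get $|X| \geq k+1$, so Lemma~\ref{Lemma1} applies verbatim to both $G$ and $G'$ with the same set $X$ (note that $V(G)=F\cup S\cup\overline F$ disjointly and $F$ is $X$\!-free, so $X \subseteq \overline F \cup S = V(G')$). By the backward direction of Lemma~\ref{Lemma1} applied to $G'$, it then suffices to produce, for every pair $x,y \in X$ that is \emph{non-adjacent in $G'$}, a family of $k$ internally vertex-disjoint $x$--$y$ paths in $G'$.

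So I would fix such a pair $x,y$ and first observe that non-adjacency in $G'$ forces non-adjacency in $G$: since $G'$ arises from $G[\overline F\cup S]$ only by turning $S$ into a clique, the sole edges of $G'$ not already present in $G$ lie inside $S$; hence $xy\notin E(G')$ implies both $xy\notin E(G)$ and that $x,y$ are not both in $S$. Because $\kappa_G(X)\ge k$, the forward direction of Lemma~\ref{Lemma1} (applied to $G$) yields $k$ internally vertex-disjoint $x$--$y$ paths $P_1,\dots,P_k$ in $G$. These may pass through the fragment $F$, which is absent from $G'$, and repairing this is the heart of the argument.

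The key step, and the main obstacle, is the rerouting. Since $S$ separates $F$ from $\overline F$ in $G$, there are no $G$\!-edges between $F$ and $\overline F$, so along each $P_i$ every maximal excursion into $F$ is both entered and left through a vertex of $S$. For each $P_i$ I would record the subsequence $v_0=x, v_1,\dots,v_m=y$ of those of its vertices lying in $\overline F \cup S$, listed in the order they occur along $P_i$. For consecutive $v_j,v_{j+1}$ there are two cases: either they are already consecutive on $P_i$, whence $v_jv_{j+1}\in E(G[\overline F\cup S])\subseteq E(G')$; or $P_i$ runs through $F$ between them, in which case both $v_j$ and $v_{j+1}$ lie in $S$ and so $v_jv_{j+1}\in E(G')$ thanks to the added clique edges. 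In either case $v_jv_{j+1}\in E(G')$, so $v_0v_1\cdots v_m$ is a walk in $G'$; and since the $v_j$ are distinct vertices of the simple path $P_i$, it is in fact a simple $x$--$y$ path $P_i'$ in $G'$.

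Finally I would verify disjointness and conclude. The internal vertices of each $P_i'$ form a subset of the internal vertices of $P_i$ (we discarded only the $F$\!-excursions and retained their $S$\!-endpoints, all of which already lay on $P_i$); since $P_1,\dots,P_k$ are internally disjoint in $G$, the paths $P_1',\dots,P_k'$ are internally disjoint in $G'$. This furnishes the required $k$ disjoint $x$--$y$ paths in $G'$, and Lemma~\ref{Lemma1} then gives $\kappa_{G'}(X)\ge k=\kappa_G(X)$. I expect the only delicate point to be the bookkeeping that each $F$\!-excursion is genuinely flanked by $S$\!-vertices and that the reduced sequences remain \emph{simple} paths rather than mere walks; a direct separator-based argument (showing every $X$\!-separator of $G'$ yields one of $G$ of the same size, splitting on whether $S\subseteq S'$) is also available, but I find this Menger route cleaner.
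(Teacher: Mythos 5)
Your proposal is correct and follows essentially the same route as the paper: both reduce to the Menger-type characterization of Lemma~\ref{Lemma1} and repair an optimal path system in $G$ by replacing each excursion into $F$ (which must enter and leave through $S$) with the corresponding clique edge on $S$, then checking that internal vertices only decrease so disjointness is preserved. The only cosmetic difference is that the paper dispatches the case of $G'[X]$ complete explicitly at the outset, whereas you absorb it vacuously into the Menger criterion.
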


\begin{proof}[of Lemma~\ref{Lemma2}]
If  $G'[X]$ is complete, then $\kappa_{G'}(X)=|X|-1\ge \kappa_G(X)$, hence, Lemma~\ref{Lemma2} holds in this case. \\
Consider $x_1,x_2\in X$ such that $x_1$ and $x_2$ are  non-adjacent  in $G'$. Since $S$ forms a clique in $G'$, we may assume that $x_2\notin S$ (possibly $x_1\in S$).
According to Lemma~\ref{Lemma1}, we have to show that there are at least $\kappa_G(X)$ internally vertex disjoint paths in $G'$ connecting $x_1$ and $x_2$. Note that $x_1$ and $x_2$ are also non-adjacent in $G$ and, again using Lemma~\ref{Lemma1}, consider a set $\mathcal{P}$ of $\kappa_G(X)$ internally vertex disjoint paths of $G$ connecting $x_1$ and $x_2$.\\
If some $P\in\mathcal{P}$ is not a path of $G'$, then $P$ contains at least one subpath $Q$ on at least $3$ vertices connecting two vertices $u,v\in S$  such that
$V(Q)\cap V(G')=\{u,v\}$. We obtain a path connecting $x_1$ and $x_2$ from $P$ by removing all inner vertices of $Q$ and adding the edge $uv$. 
Note that $uv\in E(G')$ and repeating this procedure finally leads to a path $P'$ of $G'$. If $P\in\mathcal{P}$ is a path of $G'$, we put $P'=P$. \\
Since $V(P')\subseteq V(P)$ for all $P\in\mathcal{P}$ , the set $\mathcal{P}'=\{P'\mid P\in\mathcal{P}\}$ is a set of $\kappa_G(X)$ internally vertex disjoint paths  connecting $x_1$ and $x_2$. Since $x_1$ and $x_2$ have been chosen arbitrarily, Lemma~\ref{Lemma2} is proved.
\end{proof}

 First we prove Theorem~\ref{main}~(ii).

\begin{proof}[of Theorem~\ref{main}~(ii)]
Since $X$ is connected in $G$, there is a component $K$ of $G$ containing all vertices from $X$. 
If $K$ is  $k$-connected, then $K$ itself is a  $k$-connected topological $X$\!-minor of $G$ and (ii) is proved in this case.

Assume that (ii) is not true and let $G$ be a counterexample with the smallest number of vertices.
Then $G$ is connected and consider a smallest separator $S$ of $G$,  $|S|\le k-1\le 2$.
Since $\kappa_G(X)\ge k$,  there is  an $X$\!-free $S$-fragment $F$ of $G$ and $X\subseteq \overline{F}\cup S$.

Let $G'$ be obtained from $G[\overline{F}\cup S]$ by adding all possible edges between vertices of $S$ (if not already present), then, by Lemma~\ref{Lemma2},  $\kappa_{G'}(X)\ge k$.\\
 Since $G'$ has less vertices than $G$, $G'$ contains a subgraph $H'$ isomorphic to a subdivision  of a
$k$-connected $X$\!-minor $M'$ of $G'$. 
Note that $M'$ is also an $X$\!-minor of $G$, since we can contract $F$ into one of the at most two vertices of $S$ by performing only $X$\!-legal edge contractions.

If $H'$ is also a subgraph of $G$, then this contradicts the choice of $G$.
Thus, $k=3$, $\kappa_{G}(V(G))=2$, $S=\{u,v\}$ and $uv\in E(H')\setminus E(G)$.
In this case, let $H$ be obtained from $H'$ by replacing $uv$ with a path $Q$ of $G$ connecting $u$ and $v$ such that $V(Q)\cap
\overline{F}=\emptyset$. Then $H$ is a subgraph of $G$ and also isomorphic to a subdivision of $M'$, again a contradiction, and (ii) is proved.
\end{proof}

To prove Theorem~\ref{main}~(i), we show the following lemma first. This will enable us to find the desired minor in an iterative way.

\begin{lemma}\label{lem:proofmainthm}
Let $G$ be a connected graph and $X\subseteq V(G)$.
If $\kappa_G(X)\geq 4$, then there exists an $X$\!-legal edge $vy$ such that $\kappa_{G/vy}(X)\geq 4$, unless $G$ is $4$-connected. 
\end{lemma}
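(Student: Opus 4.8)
The plan is to run an \emph{uncrossing} argument in the spirit of the classical contractible-edge proofs, adapted to the rooted setting. First I would record two reductions. Since $\kappa_G(X)\ge 4$ forces $|X|\ge\nobreak 5$ (because $\kappa_G(X)\le|X|-1$), and since $G$ is assumed not $4$-connected, I may fix a separator $T$ of $G$ with $|T|\le 3$; as $|T|<4\le\kappa_G(X)$, $T$ is not an $X$\!-separator, so some component of $G-T$ is $X$\!-free, and after replacing $T$ by $N(F)$ I obtain an $X$\!-free fragment $F$ with $N(F)=T$ and $|T|\le 3$. If moreover $\kappa_G(X)\ge 5$, then $G$ has no $X$\!-separator of size $4$ and, by the characterisation below, \emph{no} $X$\!-legal edge is bad; since any edge with an endpoint in the nonempty $X$\!-free set $F$ is $X$\!-legal, we are done at once. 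Hence I would assume $\kappa_G(X)=4$, so that every $X$\!-separator of size $4$ is a \emph{minimum} $X$\!-separator and is therefore \emph{tight}: each of its vertices has a neighbour in every $X$\!-fragment it bounds.

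The second reduction characterises the ``bad'' edges. For an $X$\!-legal edge $vy$ (so $y\notin X$, and $|X|$ is unchanged in $G/vy$), I claim $\kappa_{G/vy}(X)\ge 4$ \emph{unless} there is an $X$\!-separator $S$ of $G$ with $|S|=4$ and $\{v,y\}\subseteq S$. Indeed, given an $X$\!-separator $S'$ of $G/vy$ with $|S'|\le 3$, I un-merge the contracted vertex: if $v\notin S'$, then the components of $G-S'$ and of $(G/vy)-S'$ correspond (with $v,y$ in a common one), so $S'$ is an $X$\!-separator of $G$ of size $\le 3$, contradicting $\kappa_G(X)=4$; and if $v\in S'$, then $S:=(S'\setminus\{v\})\cup\{v,y\}$ is an $X$\!-separator of $G$ with $|S|\le 4$, forcing $|S|=4$ and $\{v,y\}\subseteq S$. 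Thus it suffices to exhibit one $X$\!-legal edge lying in no minimum $X$\!-separator.

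Now I would choose, among all separators of size $\le 3$ of $G$ possessing an $X$\!-free fragment, one pair $(T,F)$ with $N(F)=T$ for which $|F|$ is \emph{minimum}, and pick $y\in F$ with a neighbour $v$. Suppose for contradiction that $vy$ is bad, giving a minimum $X$\!-separator $S\ni v,y$ with $X$\!-fragments $A,B$; by tightness $N(A)=N(B)=S$, so $|N(A)|=|N(B)|=4$, while $|N(F)|=|T|\le 3$. I would then \emph{uncross} $F$ against $A$ and $B$ via submodularity of the external-neighbourhood function, $\,|N(U\cap W)|+|N(U\cup W)|\le|N(U)|+|N(W)|\,$. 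Since $y\in F\setminus A$, the corner $F\cap A$ is a proper subset of $F$; if it is nonempty with $|N(F\cap A)|\le 3$, it is a strictly smaller $X$\!-free fragment, contradicting minimality of $|F|$, whereas in the complementary regime the inequality forces $|N(F\cup A)|\le 3$, and since the $X$\!-vertices of $B$ lie outside $F$ this produces an $X$\!-separator of size $\le 3$, contradicting $\kappa_G(X)=4$. Running this over the four corners $F\cap A$, $F\cap B$, $F\cup A$, $F\cup B$ is intended to close every case and so prove $vy$ good.

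The delicate part — and the main obstacle — is exactly this corner analysis: in each regime one must verify that the set whose boundary has dropped is a \emph{genuine} $X$\!-separator, i.e.\ that $X$\!-vertices survive on \emph{both} sides of it, and that the $X$\!-free corner is nonempty and strictly smaller than $F$. The awkward degenerate configurations are when $y$'s neighbours in $A$ and $B$ lie in $T$ rather than in $F$ (so a corner such as $F\cap A$ is empty), and when the few $X$\!-vertices of a fragment all land in a freshly created boundary of size $\le 3$. Resolving these cannot rely on submodularity alone; I expect to need the tightness of the minimum separator $S$, the bound $|X|\ge 5$, and the minimality of $F$ in combination to rule them out.
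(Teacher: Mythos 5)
Your two reductions are sound and match the paper's: the bound $|X|\ge 5$, the passage to an $X$\!-free fragment $F$ of a small separator $T$, and the characterisation of bad edges (an $X$\!-legal edge $vy$ is bad only if some $X$\!-separator of size exactly $\kappa_G(X)=4$ contains both $v$ and $y$) is precisely the paper's Claim~\ref{claim:kappax}. The paper additionally disposes of $|T|\in\{1,2\}$ by a direct path argument, so that one may assume $\kappa_G(V(G))=3$; you keep $|T|\le 3$ throughout, which is harmless.

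The gap is in the main step, and you flag it yourself: the corner analysis is only ``intended to close every case,'' and you concede that submodularity alone will not do it. This is not a cosmetic omission, because your chosen lever --- minimality of $|F|$ uncrossed against the two $X$\!-fragments $A,B$ of a single bad separator $S\ni v,y$ --- degenerates exactly in the configuration that actually occurs. The paper proves (Claim~\ref{claim:SFeins}) that every $X$\!-free $T$\!-fragment consists of a single vertex, so in the critical situation $F=\{y\}$ with $y\in S$; then $F\cap A=F\cap B=\emptyset$, the ``strictly smaller $X$\!-free fragment'' branch never fires, and the submodular inequality applied to $F$ and $A$ gives only $|N(F\cup A)|\le 7$, which is vacuous. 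To get a contradiction the paper has to bring in a \emph{second} minimum $X$\!-separator $S'$, obtained from a bad edge $t'y$ at a \emph{different} vertex $t'$ of $T$, and uncross $S$ against $S'$ (not against $T$), using along the way that $T$ is an anticlique, that each $S$-$X$\!-fragment meets $T$ in exactly one vertex, and that $T\cap X=\emptyset$ (Claims~\ref{claim:Tanti}--\ref{claim:TXdisj}). None of this auxiliary structure is present in your sketch, and without it the case analysis you defer cannot be closed. So the proposal is a reasonable opening but does not constitute a proof.
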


We start with proving the following claim. 

\begin{cl}\label{claim:kappax}
If $vy$ is an $X$\!-legal edge of a graph $G$ with $X\subseteq V(G)$, then $\kappa_{G/vy}(X)\ge \kappa_{G}(X)$ or $\kappa_{G/vy}(X)=\kappa_{G}(X)-1$ and the latter case holds if and only if there exists an $X$\!-separator  of $G$ of  size $\kappa_{G}(X)$ containing $v$ and $y$.
\end{cl}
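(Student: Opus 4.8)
The plan is to analyze how contracting an $X$-legal edge $vy$ affects the sizes of $X$-separators, since $\kappa_G(X)$ is defined precisely in terms of the minimum size of an $X$-separator (capped at $|X|-1$). The key observation is that contraction can only destroy or shrink separators, never create fundamentally new large obstructions, so the parameter $\kappa$ can drop by at most one. First I would set $k := \kappa_G(X)$ and handle the trivial regime: if $G[X]$ is complete then $\kappa_G(X) = |X|-1$ and contracting an $X$-legal edge keeps $G[X]$ complete (the endpoints of $vy$ other than the contracted $y$ are unaffected since $y \notin X$), so the bound holds immediately. Otherwise there exists a genuine $X$-separator, and I would work directly with the characterization of $X$-separators of $G/vy$ in terms of $X$-separators of $G$.

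The heart of the argument is a correspondence between $X$-separators of $G/vy$ and of $G$. For the lower bound $\kappa_{G/vy}(X) \ge \kappa_G(X) - 1$, I would take any $X$-separator $S'$ of $G/vy$ and produce from it an $X$-separator of $G$ of size at most $|S'|+1$: if $v \in S'$, then replacing $v$ by the pair $\{v,y\}$ in $G$ (i.e.\ setting $S := (S' \setminus \{v\}) \cup \{v,y\}$) yields an $X$-separator of $G$, because any $X$-$X$ path in $G$ avoiding $S$ would descend to a walk in $G/vy$ avoiding $S'$, connecting the same $X$-vertices (which lie in distinct sides since $y\notin X$ cannot be one of the separated roots); if $v \notin S'$, then $S' \cup \{v\}$ — or even $S'$ itself after checking that $v$ and $y$ end up on the same side — already separates $X$ in $G$. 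In all cases one obtains $|S| \le |S'|+1$, hence $|S'| \ge |S|-1 \ge \kappa_G(X)-1$, giving $\kappa_{G/vy}(X) \ge \kappa_G(X)-1$ (also respecting the cap, since $|X|-1$ is unchanged as $y \notin X$).

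For the upper bound and the exact characterization of when the drop occurs, I would argue in the other direction: $\kappa_{G/vy}(X) = \kappa_G(X)-1$ holds if and only if $G/vy$ admits an $X$-separator of size $k-1$, and I claim this happens exactly when $G$ has an $X$-separator $S$ of size $k$ with $\{v,y\} \subseteq S$. For the ``if'' direction, given such an $S$, the image $S/vy := (S \setminus \{y\})$ (identifying $v$ and $y$) is an $X$-separator of $G/vy$ of size $k-1$, since contracting an edge inside the separator preserves the separation of the two $X$-containing sides; this forces $\kappa_{G/vy}(X) \le k-1$, and combined with the lower bound gives equality. For the ``only if'' direction, suppose $\kappa_{G/vy}(X) = k-1$ witnessed by an $X$-separator $S'$ of $G/vy$ with $|S'| = k-1$; lifting $S'$ back to $G$ as above can increase the size by at most one, and since every $X$-separator of $G$ has size at least $k$, the lift must have size exactly $k$, which forces $v \in S'$ and the lifted separator to be $(S' \setminus \{v\}) \cup \{v,y\}$, an $X$-separator of $G$ of size $k$ containing both $v$ and $y$.

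The step I expect to be the main obstacle is verifying the lift/descent correspondence cleanly in the boundary cases — in particular, making sure that when $v \notin S'$ the contracted vertex does not secretly act as a separating vertex, and conversely that an $X$-separator of $G$ containing $\{v,y\}$ projects to a bona fide \emph{$X$-separator} of $G/vy$ (both sides must still contain an $X$-vertex, which is automatic since $y \notin X$ means no root vertex is lost in the contraction). Care is also needed with the cap at $|X|-1$: I would note that since $y \notin X$ the value $|X|-1$ is the same for $G$ and $G/vy$, so the capping interacts with neither inequality. Once the separator correspondence is pinned down, the claim follows by assembling the two inequalities and the equivalence above.
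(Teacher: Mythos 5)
Your proof is correct and takes essentially the same route as the paper's: both rest on the two-way correspondence between $X$-separators of $G$ and of $G/vy$ (a separator $S'$ of $G/vy$ lifts to a separator of $G$ of size at most $|S'|+1$, with the increase occurring exactly when $v\in S'$ and $y$ must be added, while a separator of $G$ containing both $v$ and $y$ projects down with size reduced by one). The paper merely phrases the lifting step contrapositively, by taking a minimum separator $S$ of $G/vy$, extracting a path of $G-S$, and analysing whether it meets $y$ and whether $v\in S$ — the same case analysis you carry out directly.
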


\begin{smallproof}[of Claim~\ref{claim:kappax}]
We assume $\kappa_{G/vy}(X)<\kappa_{G}(X)$. Then $(G/vy)[X]$ is not complete, because otherwise
$|X|-1=\kappa_{G/vy}(X)<\kappa_{G}(X)$, contradicting $\kappa_{G}(X)\le |X|-1$.\\
Let $x_1,x_2\in X$ and $S\subset V(G/vy)$ be chosen such that  $|S|=\kappa_{G/vy}(X)$ and $S$ separates $x_1$ and $x_2$ in $G/vy$.\\
Then $x_1x_2\notin E(G/vy)$ and it follows $x_1x_2\notin E(G)$ because an edge in $E(G)\setminus E(G/vy)$ is incident with $y$. Since $|S|=\kappa_{G/vy}(X)<\kappa_{G}(X)$, $G-S$ contains a path $P$ connecting $x_1$ and $x_2$. If $y\notin V(P)$, then $P$ is also a path of $G/vy-S$, contradicting the choice of $S$. 
If $y\in V(P)$ and  $v\notin S$, then $v\in V(G/vy)$, $N_G(y)\setminus\{v\}\subseteq N_{G/vy}(v)$ and,
in both cases $v\in V(P)$ and $v\notin  V(P)$, it is easy to see that  $(G/vy)-S$ still contains a path connecting $x_1$ and $x_2$, again a contradiction.\\
All together,  $v\in S$ and every path of $G-S$ connecting $x_1$ and $x_2$ contains $y$. It follows that $S\cup \{y\}$ separates $x_1$ and $x_2$ in $G$, hence, $\kappa_{G}(X)\le |S\cup \{y\}|=\kappa_{G/vy}(X)+1\le \kappa_{G}(X)$.

If $vy$ is an $X$\!-legal edge of $G$ and there exists an $X$\!-separator  of $G$ of  size $\kappa_{G}(X)$ containing $v$ and $y$, 
then let $x_1,x_2\in X$ be chosen such that $S$ separates $x_1$ and $x_2$ in $G$. 
Each path that connects $x_1$ and $x_2$ in $G$ contains at least one vertex from $S$ and, therefore, every path that connects $x_1$ and $x_2$ in $G/vy$ contains at least one vertex from $S\setminus\{y\}$. It follows that $S\setminus\{y\}$ is an $X$\!-separator  of $G/vy$ and $\kappa_{G/vy}(X)\leq |S\setminus\{y\} |=\kappa_{G}(X)-1$. 
By the first statement of the claim, we get $\kappa_{G/vy}(X)=\kappa_{G}(X)-1$. 
\end{smallproof}

\begin{proof}[of Lemma~\ref{lem:proofmainthm}]

 Suppose that  $\kappa_G(X)\geq 4$ and $G$ is not $4$-connected. Since $|V (G)| \geq |X| > 4$, there must exist a separator $T$ with  $|T |\in\{1,2,3\}$.
Since at most one
component of $G-T$ contains vertices from $X$, there exists an $X$\!-free $T$\!-fragment $F$.
Let $t\in T$ and $y\in N_G(t)\cap F$, then $ty$ is $X$\!-legal, and it turns out by  Claim~\ref{claim:kappax} that $\kappa_{G/ty}(X)\geq 4$ if $G[X]$ is complete or if $\kappa_G(X)\geq 5$. 
Assume that $|T|\in \{1,2\}$. For all $x_1,x_2\in X$ with $x_1x_2\notin E(G)$, there are four internally vertex disjoint paths in $G$ connecting $x_1$ and $x_2$. 
If one of these paths contains $y$, then this path, say $P$, also contains $t$ and there is a path in $G/ty$ connecting $x_1$ and $x_2$ using only vertices from $P$; hence, $\kappa_{G/ty}(X)\geq 4$. \\
Therefore, if we assume that the statement of Lemma~\ref{lem:proofmainthm} does not hold, i.\,e., $\kappa_{G/vy}(X)< 4$ for every $X$\!-legal edge $vy$ of $G$, then $\kappa_G(V(G))=3$ and $\kappa_G(X)= 4$. 
Moreover, for every $X$\!-legal edge $vy$ of $G$ there is  minimum $X$\!-separator $S$ with $v, y \in S$.
For the remainder of the proof, we assume that every considered separator $T$ is minimum,  i.\,e.\ $|T|=3$, and for every $X$\!-legal edge $vy$ of $G$ there is an $X$\!-separator $S$ of $G$ with $v, y \in S$ and $|S|=4$.

\begin{cl}\label{lem:T}
Let  $S$ and $S'$ be  separators in a graph $G$. 
For an $S$\!-fragment $F$ and an $S'$\!-fragment $F'$,
let $T(F,F'):=(F\cap S')\cup(S'\cap S)\cup(S\cap F')$.\\
Then
\begin{enumerate}
\item  If $F\cap F'\neq \emptyset$, then $T(F,F')$ is a separator of $G$ separating $F\cap F'$ from the remaining graph, 
\item  $|T(F,F')|+|T(\overline{F},\overline{F'})|=|S|+|S'|$.
\end{enumerate}
\end{cl}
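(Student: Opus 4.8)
The plan is to run the standard ``crossing separators'' argument. First I would refine the vertex set of $G$ by intersecting the two tripartitions it carries: $V(G)$ is partitioned by $S$ into $F$, $S$, $\overline{F}$, and by $S'$ into $F'$, $S'$, $\overline{F'}$, since $F,\overline{F}$ are unions of components of $G-S$ (so that, crucially, $G$ has no edge between $F$ and $\overline{F}$), and likewise for $F',\overline{F'}$ and $G-S'$. Taking all pairwise intersections yields nine pairwise disjoint cells $A\cap B$ with $A\in\{F,S,\overline{F}\}$ and $B\in\{F',S',\overline{F'}\}$ whose union is $V(G)$. Observe that $T(F,F')=(F\cap S')\cup(S\cap S')\cup(S\cap F')$ is exactly the union of three of these cells, and likewise $T(\overline{F},\overline{F'})=(\overline{F}\cap S')\cup(S\cap S')\cup(S\cap\overline{F'})$.

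For (i) I would show directly that no edge of $G$ joins $F\cap F'$ to the remaining set $R:=V(G)\setminus\big((F\cap F')\cup T(F,F')\big)$. The key property is that a fragment has no edges to its complement across its separator: every neighbour of a vertex in $F$ lies in $F\cup S$, and every neighbour of a vertex in $F'$ lies in $F'\cup S'$. Hence any neighbour $v$ of a vertex $u\in F\cap F'$ satisfies $v\in(F\cup S)\cap(F'\cup S')=(F\cap F')\cup(F\cap S')\cup(S\cap F')\cup(S\cap S')$; as the last three cells are precisely $T(F,F')$, we get $v\in(F\cap F')\cup T(F,F')$, so $v\notin R$. Thus $T(F,F')$ separates $F\cap F'$ from $R$. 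To confirm that it is a genuine separator I would check both sides are non-empty: $F\cap F'\neq\emptyset$ is the hypothesis, while $\overline{F}$ is non-empty and wholly contained in $R$ (its three cells $\overline{F}\cap F'$, $\overline{F}\cap S'$, $\overline{F}\cap\overline{F'}$ all avoid both $F\cap F'$ and $T(F,F')$), so $R\neq\emptyset$.

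For (ii) the statement reduces to elementary counting once the nine cells are in place. The three cells $F\cap S'$, $S\cap S'$, $\overline{F}\cap S'$ partition $S'$, and the three cells $S\cap F'$, $S\cap S'$, $S\cap\overline{F'}$ partition $S$. Since the three summands of $|T(F,F')|=|F\cap S'|+|S\cap S'|+|S\cap F'|$ and of $|T(\overline{F},\overline{F'})|=|\overline{F}\cap S'|+|S\cap S'|+|S\cap\overline{F'}|$ are disjoint cells, I would simply add the six terms and regroup them into the three $S'$-cells and the three $S$-cells, obtaining $|S'|+|S|$, as claimed.

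The argument is essentially bookkeeping, so I do not anticipate a serious obstacle; the only points requiring care are definitional. In (i) one must use that a fragment has no edge to its complement across the separator, and one must verify that the hypothesis $F\cap F'\neq\emptyset$ together with $\overline{F}\subseteq R$ makes $T(F,F')$ a separator with both sides genuinely non-empty (rather than an empty or full vertex set). In (ii) one must keep the nine cells correctly sorted so that the same cell $S\cap S'$ is counted in both $T$'s while the remaining cells assemble into $S$ and $S'$.
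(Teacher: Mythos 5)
Your proof is correct and follows essentially the same route as the paper's: part (i) via $N_G(F\cap F')\subseteq (F\cup S)\cap(F'\cup S')$ together with the observation that $\overline{F}$ is non-empty and disjoint from $T(F,F')\cup(F\cap F')$, and part (ii) by the cell count that the paper dismisses as ``easy counting''. Your nine-cell bookkeeping simply makes explicit what the paper's two-line proof leaves implicit.
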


\begin{smallproof}[of Claim~\ref{lem:T}]
Since $S$ and $S'$ are separators, 
$N_G(F\cap F')\subseteq S\cup F$ and $N_G(F\cap F')\subseteq S'\cup F'$. Hence, $N_G(F\cap F')\subseteq T(F,F')$.
Since $\overline{F}\cup\overline{F'}\neq\emptyset$, $V(G)\neq T(F,F')\cup (F\cap F')$, thus $N_G(F\cap F')$ is a separator of $G$; and so is $T(F,F')$.
This proves (i) and easy counting leads to~(ii). 
\end{smallproof}

Now, let us go back to the situation that there is a  separator $T$ of $G$ with $|T|=\kappa_G(V(G))=3$.
We want to show that there is no edge $xy$ in $G$ with $x,y\in T$; that is that $T$ is an anticlique. 
Recall that an \emph{anticlique} of $G$ is a set $A$ of vertices of $G$ such that  $G[A]$ is an edgeless graph. 

\begin{cl}\label{claim:Tanti}
Let $T$ be a separator of $G$ with $|T|=3$. Then 
\begin{enumerate}
\item $T$ is an anticlique, 
\item if $F$ is an $X$\!-free $T$\!-fragment, $t\in T$, $y\in F\cap N_G(t)$, $S$ is an $X$\!-separator with $t,y\in S$ with $|S|=4$, and $B$ is an $S$-$X$\!-fragment, then $|B\cap T|=1$. Moreover, such an $X$\!-separator with $t,y\in S$ always exists.
\end{enumerate}
\end{cl}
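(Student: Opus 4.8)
My plan is to establish part (ii) first and then obtain part (i) as a short consequence. Throughout I work in the regime fixed before the claim: every separator of $G$ has at least $3$ vertices and every $X$-separator has at least $4$, and---combining Claim~\ref{claim:kappax} with the standing assumption that Lemma~\ref{lem:proofmainthm} fails---every $X$-legal edge lies in an $X$-separator of size exactly $4$. Since $|T|=3$ is below the minimum size of an $X$-separator, $T$ is not an $X$-separator, so at most one component of $G-T$ meets $X$; thus an $X$-free $T$-fragment $F$ exists, $X\setminus T\subseteq\overline F$, and---$T$ being a minimum separator---every vertex of $T$ has a neighbour in $F$. For the \emph{moreover} part of (ii) I would note that for $t\in T$ and $y\in F\cap N_G(t)$ the edge $ty$ is $X$-legal (as $y\notin X$), so this consequence of Claim~\ref{claim:kappax} supplies an $X$-separator $S$ with $|S|=4$ and $t,y\in S$.

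To prove the implication of (ii), I would apply Claim~\ref{lem:T} to the separators $S$ (fragments $B,\overline B$) and $T$ (fragments $F,\overline F$); here $B$ is an $S$-fragment and, by the definition of an $S$-$X$-fragment, both $B$ and $\overline B$ contain a vertex of $X$. Put $a=|B\cap T|$, $b=|\overline B\cap T|$, $c=|S\cap T|$, $d=|S\cap F|$, $e=|S\cap\overline F|$. The partitions of $T$ and $S$ give $a+b+c=3$ and $c+d+e=4$, and $t\in S\cap T$, $y\in S\cap F$ force $c\ge1$ and $d\ge1$. As $F$ is $X$-free, each $X$-vertex of $B$ lies in $B\cap\overline F$ or $B\cap T$, and similarly for $\overline B$; I would therefore split into four cases according to whether $B$ and $\overline B$ meet $X$ inside $\overline F$.

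The key device is Claim~\ref{lem:T}(i): whenever $B\cap\overline F$ contains a vertex of $X$, the set $T(B,\overline F)=(B\cap T)\cup(S\cap T)\cup(S\cap\overline F)$ separates $B\cap\overline F$ from a part that still contains an $X$-vertex of $\overline B$, so it is an $X$-separator and $a+c+e\ge4$; symmetrically $b+c+e\ge4$ when $\overline B\cap\overline F$ meets $X$. Feeding these bounds together with $c+d+e=4$ and $d\ge1$ into the four cases pins $a=1$. For instance, when both sides meet $X$ in $\overline F$, adding the two inequalities yields $c+2e\ge5$, while $d\ge1$ gives $e\le3-c$, forcing $c=1$, $e=2$, and then $a\ge1$, $b\ge1$ with $a+b=2$, i.e.\ $a=b=1$; the remaining cases are similar and always give $a=|B\cap T|=1$. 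The crux of the whole argument is exactly this bookkeeping---deciding which of the sets $T(\,\cdot\,,\,\cdot\,)$ furnished by Claim~\ref{lem:T} are genuine $X$-separators (size $\ge4$) rather than mere separators (size $\ge3$), which hinges on tracking the $X$-vertices through the quadrants cut out by $S$ and $T$ and on $F$ being $X$-free.

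Finally I would deduce (i). Suppose some edge $t_1t_2$ with $t_1,t_2\in T=\{t_1,t_2,t_3\}$ exists. Pick $y_3\in F\cap N_G(t_3)$ and, by the \emph{moreover} part, an $X$-separator $S_3$ of size $4$ with $t_3,y_3\in S_3$. Both $B_3$ and its complement $\overline{B_3}$ are $S_3$-$X$-fragments, so the implication of (ii) gives $|B_3\cap T|=|\overline{B_3}\cap T|=1$; hence $|S_3\cap T|=3-1-1=1$, i.e.\ $S_3\cap T=\{t_3\}$. Consequently $t_1,t_2\notin S_3$ and they lie, one each, in $B_3$ and $\overline{B_3}$, i.e.\ in different components of $G-S_3$---impossible for adjacent vertices. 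This contradiction shows $T$ is an anticlique, completing the plan.
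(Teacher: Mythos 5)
Your proof is correct and uses essentially the same mechanism as the paper: Claim~\ref{claim:kappax} supplies the $X$\!-separator $S$ of size $4$ through the $X$\!-legal edge $ty$, and Claim~\ref{lem:T} applied to $S$ and $T$ shows that $T(B,\overline{F})$ would be an $X$\!-separator of size at most $3$ if $B\cap T=\emptyset$, forcing $|B\cap T|=|\overline{B}\cap T|=|S\cap T|=1$ and hence the anticlique property. The paper reaches this in one step (if $B\cap T=\emptyset$ then $T(B,\overline{F})\subseteq S\setminus\{y\}$), whereas your five-parameter, four-case bookkeeping is a more verbose route to the same conclusion.
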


\begin{smallproof}[of Claim~\ref{claim:Tanti}]
Let  $F$ be an $X$\!-free $T$\!-fragment and $t\in T$.
For $y\in F\cap N_G(t)$, the edge $ty$ is $X$\!-legal. 
Let $S$ be an $X$\!-separator with $t,y\in S$ and $|S|=4$. Its existence is ensured by Claim~\ref{claim:kappax}. 
Let $B$ be an $S$-$X$\!-fragment.
If $T\cap B=\emptyset$, then $B\cap \overline{F}$ is not $X$\!-free and is separated by $T(B,\overline{F})$ from $\overline{B}$ (Claim~\ref{lem:T}~(i)). 
But $T(B,\overline{F})=(B\cap T)\cup(T\cap S)\cup(S\cap \overline{F})\subseteq S\setminus\{y\}$ has at most three vertices, a contradiction to $\kappa_G(X)= 4$.\\
In the same vein, $T\cap \overline{B}\neq \emptyset$ and, because $|T|=3$, it follows $|B\cap T|=|\overline{B}\cap T|=1$ and the two vertices in $T\setminus\{t\}$ are non-adjacent.
Since $t$ has been chosen arbitrarily from $T$, $T$ is an anticlique in $G$. 
\end{smallproof}

\begin{cl}\label{claim:SFeins}
Let $T$ be a separator of $G$ with $|T|=3$ and $F$ be an $X$\!-free $T$\!-fragment, then $|F|=1$. 
\end{cl}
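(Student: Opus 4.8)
The plan is to argue by contradiction. Suppose some minimum separator $T$ with $|T|=3$ admits an $X$-free $T$-fragment $F$ with $|F|\ge 2$, and choose such a pair $(T,F)$ with $|F|$ as small as possible. First I would record two easy structural facts: every vertex of $T$ has a neighbour in $F$ (otherwise $T$ minus that vertex is a separator of size $2$, contradicting $\kappa_G(V(G))=3$), so $N_G(F)=T$; and $X\subseteq \overline F\cup T$ with $|X|\ge 5>|T|$, so $X\cap\overline F\neq\emptyset$. Then I fix $t\in T$ and a neighbour $y\in F$ of $t$, and invoke Claim~\ref{claim:Tanti}~(ii): there is a minimum $X$-separator $S$ with $t,y\in S$, $|S|=4$, and with $S\cap T=\{t\}$, $T\cap B=\{b\}$, $T\cap\overline B=\{\bar b\}$, where $B,\overline B$ are the two $S$-$X$-fragments.

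The core step (the ``squeeze'') is to show $F\cap S=\{y\}$. Applying Claim~\ref{lem:T} to $T$ and $S$ gives $T(\overline F,B)=(\overline F\cap S)\cup\{t,b\}$ and $T(\overline F,\overline B)=(\overline F\cap S)\cup\{t,\bar b\}$. Here I would track the root vertices of the two $S$-$X$-fragments: since $X\cap B\subseteq(\overline F\cap B)\cup\{b\}$ and $X\cap\overline B\subseteq(\overline F\cap\overline B)\cup\{\bar b\}$, at least one of $T(\overline F,B)$, $T(\overline F,\overline B)$ separates two $X$-carrying sides and is therefore an $X$-separator, so its size is $\ge 4$ and hence $|\overline F\cap S|\ge 2$. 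The one degenerate case, $X\cap B=\{b\}$ and $X\cap\overline B=\{\bar b\}$, I would treat separately: then $|X|\le 2+|X\cap S|$, which together with $|X|\ge 5$ and $y\notin X$ forces $T\subseteq X$ and two further root vertices of $S$ into $\overline F$, again giving $|\overline F\cap S|\ge 2$. Since $|F\cap S|+|\overline F\cap S|=|S|-|S\cap T|=3$ and $y\in F\cap S$, this yields $F\cap S=\{y\}$.

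With $F\cap S=\{y\}$ fixed, I would finish by a pendant analysis. For $A:=F\cap B$ (and symmetrically $F\cap\overline B$) one checks $N_G(A)\subseteq\{y,t,b\}$, so the size-$3$ set $\{y,t,b\}$ isolates $A$; minimality of $F$ forces $|A|\le 1$, and if $A=\{w\}$ then $N_G(w)=\{y,t,b\}$ exactly (a proper subset would give a separator of size $\le 2$). Thus $F\subseteq\{y\}\cup A\cup A'$ with each of $A,A'$ either empty or a single vertex whose two $T$-neighbours are known. If $F$ is disconnected (a union of singleton components) this is already contradictory, since such a singleton is adjacent to all of $T$, violating $N_G(w)\subseteq\{y,t,b\}$. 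In the connected case I would re-run the squeeze from a different base edge: pick $w\in F\setminus\{y\}$ with two $T$-neighbours $\{p,q\}$ and the remaining vertex $t'\in T\setminus\{p,q\}$ (which, as $w\not\sim t'$, has a neighbour $y'\in F\setminus\{w\}$); the squeeze applied to $(t',y')$ makes $w$ a pendant with $N_G(w)\cap T\subseteq\{t',b'\}$ for a single $b'$, contradicting $\{p,q\}\subseteq N_G(w)\cap T$ with $p,q\neq t'$.

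The main obstacle is the squeeze together with its degenerate case: showing that one of the crossing sets $T(\overline F,B)$, $T(\overline F,\overline B)$ really is an $X$-separator requires careful control of where the (at least five) root vertices sit relative to $T$, $S$, $B$, and $\overline B$, and the borderline situation in which each $S$-$X$-fragment carries its only root inside $T$ must be excluded using $|X|>|T|$. The concluding pendant/restart argument is then mostly bookkeeping, relying on $|T|=3$, the anticlique structure of $T$ (Claim~\ref{claim:Tanti}~(i)), and the fragment-splitting of Claim~\ref{claim:Tanti}~(ii).
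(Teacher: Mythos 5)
Your argument is correct, but it takes a longer road than the paper's, and the two diverge most in the endgame. For the first half --- establishing $F\cap S=\{y\}$ --- you work forwards, showing $|\overline{F}\cap S|\ge 2$ by exhibiting an $X$\!-separator among the crossing sets $T(\overline{F},B)$ and $T(\overline{F},\overline{B})$ and killing the degenerate case $X\cap B=\{b\}$, $X\cap\overline{B}=\{\bar b\}$ by counting against $|X|\ge 5$; the paper argues by contradiction from $|F\cap S|\ge 2$, gets $|T(B,\overline{F})|\le 3$ and $|T(\overline{B},\overline{F})|\le 3$, concludes that $B\cap\overline{F}$ and $\overline{B}\cap\overline{F}$ are $X$\!-free, and hence $X\subseteq T\cup(\overline{F}\cap S)$, contradicting $|X|\ge 5$ --- the same squeeze, contraposed, with no case split. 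The genuine difference is the finish. Once $F\cap S=\{y\}$, the paper observes that $T(B,F)=\{t,y,t'\}$ would, if $B\cap F\neq\emptyset$, be a separator of size $3$ containing the edge $ty$, which Claim~\ref{claim:Tanti}~(i) forbids; hence $B\cap F=\overline{B}\cap F=\emptyset$ and $F=\{y\}$ immediately. You instead introduce a minimal counterexample to bound $|F\cap B|$ and $|F\cap\overline{B}|$ by $1$, pin down the pendant vertices' neighbourhoods exactly, and re-run the entire construction from a second base edge $(t',y')$ to derive incompatible neighbourhood constraints on $w$; I checked the disconnected case and the second pass, and they do close. But the induction on $|F|$ and the second application of Claim~\ref{claim:Tanti}~(ii) are avoidable: the anticlique property of $3$-separators, which you list among your ingredients but never actually invoke, is precisely what makes the one-line finish work. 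Your route buys independence from Claim~\ref{claim:Tanti}~(i) in the final step; the paper's buys a proof about a third the length.
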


\begin{smallproof}[of Claim~\ref{claim:SFeins}]
Let $t\in T$, $y\in F\cap N_G(t)$, $S$ be an $X$\!-separator with $t,y\in S$ with $|S|=4$ (by Claim~\ref{claim:kappax}), and $B$ be an $S$-$X$\!-fragment.
If $|F\cap S|\geq 2$, then $|T(B,\overline{F})|\leq 3$ and $|T(\overline{B},\overline{F})|\leq 3$, and both $B\cap\overline{F}$ and $\overline{B}\cap\overline{F}$ are $X$\!-free, so that $X\subseteq T\cup (\overline{F}\cap S)$, contradicting $|X|\geq 5$. 
Hence $F\cap S=\{y\}$.
Let $t'$ be the unique vertex in $B\cap T$ by Claim~\ref{claim:Tanti}~(ii). 
\\
It follows that $B\cap F=\emptyset$ for otherwise this set would be an $\{t, y, t'\}$-fragment as $T(B,F)=\{t, y, t'\}$ is a separator of $G$ by Claim~\ref{lem:T}~(i); but $\{t, y, t' \}$ is not an anticlique since $ty\in E(G)$, which is  a contradiction to Claim~\ref{claim:Tanti}~(i).
Likewise, $\overline{B}\cap F =\emptyset$, so that $F = \{y\}$, and again, this holds for
every $X$\!-free $T$\!-fragment. 
\end{smallproof}

Now, let $T=\{t,t',t''\}$ be a separator of $G$, $F=\{y\}$ be an $X$\!-free $T$\!-fragment (Claim~\ref{claim:SFeins}), and $S$ be an $X$\!-separator with $t,y\in S$ and $|S|=4$. 
Then there is an $S$-$X$\!-fragment $B$ and unique vertices $t'$ and $t''$ in $B\cap T$ and $\overline{B}\cap T$, respectively (by Claim~\ref{claim:Tanti}~(ii)). 
There exists an $X$\!-separator $S'$ with $t',y\in S'$ and $|S'|=4$ by Claim~\ref{claim:kappax} and we may take an $S'$\!-$X$\!-fragment $B'$ such that $t\in B'$ and $t''\in \overline{B'}$ (by Claim~\ref{claim:Tanti}~(ii)).
This situation is sketched in Figure~\ref{fig:TSS}.

\begin{figure}[h]
\begin{center}
\begin{tikzpicture}[scale=.3][line width=42.5mm]
\filldraw[fill=black!50!, fill opacity=.2,rounded corners=8pt, line width=.7pt, draw=black] (-6.0,1)--(-6.0,-1)--(9,-1)--(9,1)--cycle;
\filldraw[fill=black!50!, fill opacity=.2,rounded corners=8pt, line width=.7pt, draw=black] (-1,6)--(-1,-9)--(1,-9)--(1,6)--cycle;

\node  (y) [circle, draw=black,fill=black, inner sep=0pt, minimum width=6pt,label={[label distance=-.16cm]70:$y$} ] at (-.2,-.2) {};
\node  (x) [circle, draw=black,fill=white, inner sep=0pt, minimum width=7pt,label={[label distance=.02cm]0:$t$} ] at (-4,0) {};
\node  (xp) [circle, draw=black,fill=white, inner sep=0pt, minimum width=7pt,label={[label distance=.02cm]-90:$t'$} ] at (0,4) {};
\node  (xpp) [circle, draw=black,fill=white, inner sep=0pt, minimum width=7pt,label={[label distance=-.08cm]45:$t''$} ] at (5,-5) {};

\node  (Sp) at (0,7.3) {$S'$};
\draw [decorate,decoration={brace,mirror,amplitude=8pt}] (-1.5,6)--(-5.8,6) node [black,midway,yshift=.6cm] {$B'$};
\draw [decorate,decoration={brace,mirror,amplitude=8pt}] (9,6) --(1.5,6)node [black,midway,yshift=.6cm] {$\overline{B'}$};

\node  (S) at (-7.3,0) {$S$};
\draw [decorate,decoration={brace,mirror,amplitude=8pt}] (-6,5.8)--(-6,1.5) node [black,midway,xshift=-.6cm] {$B$};
\draw [decorate,decoration={brace,mirror,amplitude=8pt}] (-6,-1.5)--(-6,-9) node [black,midway,xshift=-.6cm] {$\overline{B}$};
\end{tikzpicture}
\end{center}
\caption{}\label{fig:TSS}
\end{figure}

\begin{cl}\label{claim:123} The following holds:
\begin{enumerate}
\item $B\cap\overline{B'}$ or $\overline{B}\cap B'$ is $X$\!-free,
\item $B\cap B'$ or $\overline{B}\cap \overline{B'}$ is $X$\!-free,
\item If $B\cap B'=\emptyset$, then $|T(B,B')|\geq 5$. 
\end{enumerate}
\end{cl}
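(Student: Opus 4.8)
The plan is to prove all three parts using Claim~\ref{lem:T}, which gives us control over the sizes of the separators $T(\cdot,\cdot)$ built from the intersections of the fragments $B,\overline{B}$ (associated with $S$) and $B',\overline{B'}$ (associated with $S'$). Recall the setup: $T=\{t,t',t''\}$ is a separator of size $3$, $y$ is the single vertex of the $X$-free fragment $F=\{y\}$ (by Claim~\ref{claim:SFeins}), and both $S$ and $S'$ are $X$-separators of size exactly $4$ containing $y$, with $t\in S$, $t'\in S'$. The key locations in $T$ are fixed: $t'\in B\cap T$, $t''\in\overline{B}\cap T$ (so $t\in S$), and $t\in B'$, $t''\in\overline{B'}$ (so $t'\in S'$). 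Throughout I would remember that $S\cup\{y\}$ and $S'\cup\{y\}$ are \emph{not} separators we want to produce; rather, the goal of each part is to locate an $X$-free intersection-quadrant so that a contradiction can later be derived.

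For part (i), I would argue by contradiction: suppose both $B\cap\overline{B'}$ and $\overline{B}\cap B'$ contain a vertex of $X$. Since each is nonempty, Claim~\ref{lem:T}~(i) (applied with the fragments $B$, $\overline{B'}$ and then $\overline{B}$, $B'$) tells us $T(B,\overline{B'})$ and $T(\overline{B},B')$ are separators of $G$, each separating an $X$-containing set from the rest, hence each is an $X$-separator of size at least $\kappa_G(X)=4$. By Claim~\ref{lem:T}~(ii) their sizes sum to $|S|+|S'|=8$, so each has size exactly $4$. The contradiction should come from tracking the fixed vertices $t,t',t''$ and $y$: since $y$ lies in both $S$ and $S'$ but is the isolated vertex of $F$, and since $t,t',t''$ are distributed across the quadrants in the prescribed way, the sets $T(B,\overline{B'})=(B\cap S')\cup(S\cap S')\cup(S\cap\overline{B'})$ cannot simultaneously avoid producing a separator of $G$ that is too small or that contradicts Claim~\ref{claim:SFeins} (which forces all $X$-free $T$-fragments to be singletons). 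Parts (ii) and (iii) follow the same template with the pairing permuted: for (ii) I would pair $B$ with $B'$ and $\overline{B}$ with $\overline{B'}$, again using that the two resulting $T$-separators have sizes summing to $8$ and each is at least $4$, forcing equality and extracting a contradiction from the placement of $t,t',t'',y$.

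Part (iii) is different in character, as it is a direct size estimate rather than an $X$-freeness dichotomy. Here I would assume $B\cap B'=\emptyset$ and show $|T(B,B')|\geq 5$. By Claim~\ref{lem:T}~(ii), $|T(B,B')|+|T(\overline{B},\overline{B'})|=|S|+|S'|=8$, so it suffices to bound $|T(\overline{B},\overline{B'})|\leq 3$. The idea is that $\overline{B}\cap\overline{B'}$ contains the vertex $t''$ (since $t''\in\overline{B}\cap T$ and $t''\in\overline{B'}$) and is nonempty, so $T(\overline{B},\overline{B'})$ is a separator of $G$ by Claim~\ref{lem:T}~(i) and thus has size at least $\kappa_G(V(G))=3$; combined with the hypothesis $B\cap B'=\emptyset$ this forces the complementary quadrant's separator to be small. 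I would carefully verify that the hypothesis $B\cap B'=\emptyset$ is exactly what collapses one of the three defining pieces of $T(\overline{B},\overline{B'})$, pinning it at $3$ and yielding $|T(B,B')|\geq 5$.

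The main obstacle, I expect, will be the bookkeeping in parts (i) and (ii): precisely tracking which of $t,t',t'',y$ lands in which quadrant and in which of the three constituent pieces $(F\cap S')$, $(S\cap S')$, $(S\cap F')$ of each $T(\cdot,\cdot)$, so that the size equalities forced by Claim~\ref{lem:T}~(ii) actually clash with the structural constraints from Claim~\ref{claim:SFeins} and Claim~\ref{claim:Tanti}. The arithmetic is routine once the incidences are pinned down, but the risk lies in an incorrect quadrant assignment; I would draw on Figure~\ref{fig:TSS} to keep the placements straight and double-check that $y\in S\cap S'$ is counted exactly once in each relevant separator.
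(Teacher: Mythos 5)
Your proposal correctly sets up parts (i) and (ii) --- assuming both quadrants meet $X$ forces both $T(\cdot,\cdot)$ to be $X$\!-separators of size exactly $4$ via Claim~\ref{lem:T}~(ii) --- but it stops exactly where the real work begins. The contradiction is not a diffuse clash of bookkeeping; it is one concrete step: $y\in S\cap S'\subseteq T(B,\overline{B'})$ has \emph{all} of its neighbours in $T=\{t,t',t''\}$ (because $F=\{y\}$ is a $T$\!-fragment), and none of $t,t',t''$ lies in $B\cap\overline{B'}$ (since $t\in B'$, $t'\in S'$, $t''\in\overline{B}$). Hence $y$ has no neighbour in $B\cap\overline{B'}$, so $T(B,\overline{B'})\setminus\{y\}$ is still an $X$\!-separator, now of size $3$, contradicting $\kappa_G(X)=4$; the same deletion of $y$ from $T(B,B')$ settles (ii). You list all the ingredients ($y$ is the isolated vertex of $F$, the positions of $t,t',t''$) but never assemble them into this step, and the appeal to Claim~\ref{claim:SFeins} here is a red herring, so as written (i) and (ii) are incomplete rather than wrong.

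Part (iii) is where the proposal actually goes astray. You propose to deduce $|T(B,B')|\geq 5$ from $|T(\overline{B},\overline{B'})|\leq 3$, but the only fact you offer about $T(\overline{B},\overline{B'})$ --- that it is a separator because $\overline{B}\cap\overline{B'}\ni t''$ is non-empty --- yields $|T(\overline{B},\overline{B'})|\geq 3$, a bound in the wrong direction; moreover the hypothesis $B\cap B'=\emptyset$ does not ``collapse'' any of the three pieces of $T(\overline{B},\overline{B'})=(\overline{B}\cap S')\cup(S\cap S')\cup(S\cap\overline{B'})$, none of which involves $B\cap B'$. Since $|T(\overline{B},\overline{B'})|\leq 3$ is, via Claim~\ref{lem:T}~(ii), equivalent to the very inequality being proved, your route is circular as described. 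The correct argument counts five distinct vertices inside $T(B,B')$ directly: $y\in S\cap S'$, $t\in S\cap B'$, $t'\in B\cap S'$, plus a neighbour of $t$ in $B$ and a neighbour of $t'$ in $B'$. Such neighbours exist (otherwise $S\setminus\{t\}$, resp.\ $S'\setminus\{t'\}$, would be an $X$\!-separator of size $3$); a neighbour of $t$ in $B$ must lie in $(B\cap S')\setminus\{t'\}$ because $N_G(t)\subseteq B'\cup S'$, $B\cap B'=\emptyset$, and $tt'\notin E(G)$ since $T$ is an anticlique by Claim~\ref{claim:Tanti}~(i); symmetrically for $t'$. You would need to replace your argument for (iii) with a count of this kind.
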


\begin{smallproof}[of Claim~\ref{claim:123}]
To prove (i) assume that $B\cap\overline{B'}$ and $\overline{B}\cap B'$ are not $X$\!-free.
Then, by Claim~\ref{lem:T}, $T(B,\overline{B'})$ and $T(\overline{B},B')$ both are $X$\!-separators and since $|T(B,\overline{B'})|+|T(\overline{B},B')|=|S|+|S'|=8$, we have $|T(B,\overline{B'})|=|T(\overline{B},B')|=4$.
But $T(B,\overline{B'})\setminus\{y\}$ is also an $X$\!-separator because $y$ has no neighbor in $B\cap\overline{B'}$, a contradiction. \\
By the same arguments, $T(B,B')\setminus\{y\}$ is an $X$\!-separator of size $3$ if $B\cap B'$ and  $\overline{B}\cap \overline{B'}$  both are not $X$\!-free, and (ii) is shown. \\
To see (iii), assume that $B\cap B'=\emptyset$. Since $t$ and $t'$ must have neighbors in $B$ and $B'$, respectively, which can only be in $(S'\cap B)\setminus\{t'\}$ and $(S\cap B')\setminus\{t\}$, respectively, $T(B,B')$ has at least five vertices. 
\end{smallproof}

\begin{cl}\label{claim:TXdisj}
Let $T$ be a separator of $G$ with $|T|=3$. 
If $B$ and $B'$ are  an $S$-$X$\!-fragment and an $S'$\!-$X$\!-fragment, respectively, as defined before, then $\overline{B}\cap \overline{B'}$ is $X$\!-free.
Moreover, $T\cap X=\emptyset$.
\end{cl}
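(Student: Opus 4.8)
The plan is to prove the two assertions in turn, abbreviating the four ``quadrants'' cut out by $S$ and $S'$ as $Q_1=B\cap B'$, $Q_2=B\cap\overline{B'}$, $Q_3=\overline B\cap B'$, and $Q_4=\overline B\cap\overline{B'}$, and recording the placements forced by the construction: $t\in S\cap B'$, $t'\in B\cap S'$, $t''\in Q_4$, and $y\in S\cap S'$ with $N_G(y)=T=\{t,t',t''\}$ (since $\{y\}$ is a component of $G-T$ and $\delta(G)\ge 3$). The single most useful observation is that $y$ has \emph{no} neighbour in $Q_1$, $Q_2$, or $Q_3$, because its only neighbours $t,t',t''$ lie in $S$, in $S'$, and in $Q_4$, respectively. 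This is exactly the device already used inside the proof of Claim~\ref{claim:123}, and I would reuse it throughout.

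For the first assertion I argue by contradiction: suppose $Q_4=\overline B\cap\overline{B'}$ is not $X$-free. Then Claim~\ref{claim:123}~(ii) forces $Q_1=B\cap B'$ to be $X$-free, and I split on whether $Q_1$ is empty. If $Q_1=\emptyset$, then Claim~\ref{claim:123}~(iii) gives $|T(B,B')|\ge 5$, so by Claim~\ref{lem:T}~(ii) we get $|T(\overline B,\overline{B'})|\le 3$; but $T(\overline B,\overline{B'})$ separates the non-$X$-free set $Q_4$ (which contains $t''$) from a side that still meets $X$ (for instance the $S$-$X$-fragment $B$), so it is an $X$-separator of size at most $3$, contradicting $\kappa_G(X)=4$. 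Thus $Q_4$ is $X$-free in this subcase.

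If instead $Q_1\neq\emptyset$, then since $y\in T(B,B')$ has no neighbour in $Q_1$, the set $T(B,B')\setminus\{y\}$ still separates $Q_1$ from the rest and is therefore a separator of $G$ of size at least $3$; hence $|T(B,B')|\ge 4$ and, by Claim~\ref{lem:T}~(ii), $|T(\overline B,\overline{B'})|\le 4$. As before $T(\overline B,\overline{B'})$ is an $X$-separator, so $|T(\overline B,\overline{B'})|=4=|T(B,B')|$ and $T(B,B')\setminus\{y\}$ is a minimum separator whose $X$-free fragment is $Q_1$. Claim~\ref{claim:SFeins} then forces $Q_1=\{w\}$ to be a single vertex with $N_G(w)=T(B,B')\setminus\{y\}=\{t,t',z\}$, where $z$ is the fourth vertex of $T(B,B')$. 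This produces a second degree-$3$, $X$-free vertex $w$ that, just like $y$, is adjacent to both $t$ and $t'$. The main obstacle is to turn this ``twin'' configuration into a contradiction. I expect to invoke the standing hypothesis of the lemma that no $X$-legal contraction preserves $\kappa_{\cdot}(X)\ge 4$, applied to the $X$-legal edge $zw$ (legal since $w\notin X$): by Claim~\ref{claim:kappax} this yields a size-$4$ $X$-separator $S^\ast$ containing $z$ and $w$. Because $w$ has degree $3$ and $t,t'$ already share the neighbour $y$, minimality of $S^\ast$ forces $y\in S^\ast$ and drives $t,t'$ into distinct components of $G-S^\ast$; the goal is then to delete a suitable vertex of $S^\ast$ (using once more that $y$ and $w$ have no further neighbours, and pinning down which components carry $X$ via Claim~\ref{claim:123}~(i)) to obtain an $X$-separator of size $3$. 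This bookkeeping with the components of $G-S^\ast$ is the delicate point I would have to be careful about.

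For the ``moreover'' part, the first assertion already gives $t''\notin X$, since $t''\in\overline B\cap\overline{B'}$ has just been shown to be $X$-free. The construction that produced $B$ and $B'$ is symmetric in the three vertices of $T$: any chosen vertex of $T$ can be made to play the role of $t''$ by taking it to be the vertex lying in both $\overline B$ and $\overline{B'}$ (one selects the initial vertex of $T$, the $S$-$X$-fragment $B$, and then $S'$ and $B'$ accordingly). Running the first part of the claim under each of these relabellings yields $t\notin X$ and $t'\notin X$ as well, so $T\cap X=\emptyset$.
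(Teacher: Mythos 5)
Your setup is sound and matches the paper's: the quadrant decomposition, the placements $t\in S\cap B'$, $t'\in B\cap S'$, $t''\in \overline{B}\cap\overline{B'}$, $y\in S\cap S'$ with $N_G(y)=T$, the observation that $y$ has no neighbour in $B\cap B'$, $B\cap\overline{B'}$, or $\overline{B}\cap B'$, the reduction to $|T(B,B')|=|T(\overline{B},\overline{B'})|=4$ with $B\cap B'=\{w\}$ and $N_G(w)=T(B,B')\setminus\{y\}=\{t,t',z\}$, and the symmetry argument for the ``moreover'' part are all correct. But the heart of the proof --- deriving a contradiction from this configuration --- is missing, and you say so yourself (``I expect to invoke\dots'', ``the goal is then to\dots'', ``the delicate point I would have to be careful about''). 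Worse, the route you sketch cannot be completed as described. Applying the contraction hypothesis to $zw$ does give a size-$4$ $X$\!-separator $S^\ast\ni z,w$, and one can indeed force $y\in S^\ast$ with $t,t'$ in distinct $X$\!-containing components of $G-S^\ast$. However, since $S^\ast$ is a \emph{minimum} $X$\!-separator, every vertex of $S^\ast$ has a neighbour in every $X$\!-containing component of $G-S^\ast$ (otherwise that vertex could be dropped); consequently, deleting any single vertex of $S^\ast$ merges the two $X$\!-containing components, and $S^\ast\setminus\{s\}$ is never an $X$\!-separator. So ``delete a suitable vertex of $S^\ast$'' is a dead end, and you are left with a consistent-looking configuration rather than a contradiction.

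The paper closes this gap by a different device: it locates the fourth vertex $v$ of $T(B,B')$ within $(S\cap B')\cup(S\cap S')\cup(S'\cap B)$ and rules out each position. For $v\in S\cap B'$ (and symmetrically $v\in S'\cap B$) the inequality $|\overline{B}\cap S'|\ge|S\cap B'|$ combined with Claim~\ref{lem:T}~(ii) forces $|T(B,\overline{B'})|\le 3$, hence $B\cap\overline{B'}=\emptyset$, hence $B=\{b,t'\}$ with $t'\in X$ of degree at least $4$; this clashes with $N_G(b)$ being an anticlique (Claim~\ref{claim:Tanti}~(i)). For $v\in S\cap S'$ both $B\cap\overline{B'}$ and $\overline{B}\cap B'$ are shown $X$\!-free, whence $X\cap B'=\{t\}$, and a degree count at $t$ (again using the anticlique property) yields the contradiction. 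You would need to supply an argument of this kind --- or some other completion --- before your proof stands.
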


\begin{smallproof}[of Claim~\ref{claim:TXdisj}]
Assume that $\overline{B}\cap \overline{B'}$ is not $X$\!-free.
Thus, $B\cap B'$ is $X$\!-free by Claim~\ref{claim:123}~(ii) and $T(\overline{B},\overline{B'})$ is an $X$\!-separator by Claim~\ref{lem:T}~(i); therefore, $|T(\overline{B},\overline{B'})|\geq 4$.
One checks that $|\overline{B}\cap S'|\geq |S\cap B'|$ and $|\overline{B'}\cap S|\geq |S'\cap B|$ 
(it follows from $|S|=4\leq |T(\overline{B},\overline{B'})|=|S\setminus(S\cap B')|+|\overline{B}\cap S'|=|S|-|S\cap B'|+|\overline{B}\cap S'|$, the other inequality follows similarly). \\
Furthermore, $|T(B,B')|\leq 4$ (Claim~\ref{lem:T}~(ii)) and by Claim~\ref{claim:123}~(iii), $B\cap B'\neq \emptyset$, so that $\hat{T}=N_G(B\cap B')=T(B,B')\setminus\{y\}$ is a separator of size 3 in $G$.
By Claim~\ref{claim:SFeins}, $B\cap B'$ is a $\hat{T}$-fragment and its unique vertex $b$ is adjacent to the three vertices in $\hat{T}$.
Let $v$ be the unique vertex from $T(B,B')\setminus\{t,t',y\}$. 
The situation is sketched in Figure~\ref{fig:claim6}. 

\begin{figure}[h]
\begin{center}
\begin{tikzpicture}[scale=.3][line width=42.5mm]
\filldraw[fill=black!50!, fill opacity=.2,rounded corners=8pt, line width=.7pt, draw=black] (-9.0,1)--(-9.0,-1)--(6,-1)--(6,1)--cycle;
\filldraw[fill=black!50!, fill opacity=.2,rounded corners=8pt, line width=.7pt, draw=black] (-1,9)--(-1,-6)--(1,-6)--(1,9)--cycle;

\node  (y) [circle, draw=black,fill=black, inner sep=0pt, minimum width=5pt,label={[label distance=-.16cm]60:$y$} ] at (-.4,-.3) {};
\node  (x) [circle, draw=black,fill=black, inner sep=0pt, minimum width=5pt,label={[label distance=.02cm]182:$t$} ] at (-4,0) {};
\node  (xp) [circle, draw=black,fill=black, inner sep=0pt, minimum width=5pt,label={[label distance=.02cm]88:$t'$} ] at (0,4) {};
\node  (xpp) [circle, draw=black,fill=black, inner sep=0pt, minimum width=5pt,label={[label distance=-.08cm]35:$t''$} ] at (3,-3) {};

\node  (b) [circle, draw=black,fill=black, inner sep=0pt, minimum width=6pt,label={[label distance=-.08cm]90:$b$} ] at (-6,4) {};
\node  (v) [circle, draw=black,fill=white, inner sep=0pt, minimum width=6pt,label={[label distance=-.08cm]180:$v$} ] at (-1.1,.6) {};
\node  (c) [circle, draw=black,fill=white, inner sep=0pt, minimum width=6pt,label={[label distance=-.08cm]-90:$c$} ] at (-6,-4) {};
\node  (cs) [circle, draw=black,fill=white, inner sep=0pt, minimum width=5pt ] at (0,-4) {};

\draw[line width=.7pt] (y)--(x);
\draw[line width=.7pt] (y)--(xp);
\draw[line width=.7pt] (y)--(xpp);
\draw[line width=.7pt] (b)--(x);
\draw[line width=.7pt] (b)--(xp);
\draw[line width=.7pt] (b)--(v);
\draw[line width=.7pt,dashed] (c)--(x);
\draw[line width=.7pt,dashed] (c)--(cs);
\draw[line width=.7pt,dashed] (c)--(v);

\node (free) at (-5,8.4) {{\footnotesize $X$-free}};

\node  (Sp) at (0,7.3) {$S'$};
\draw [decorate,decoration={brace,mirror,amplitude=8pt}] (-1.5,9)--(-8.8,9) node [black,midway,yshift=.6cm] {$B'$};
\draw [decorate,decoration={brace,mirror,amplitude=8pt}] (6,9) --(1.5,9)node [black,midway,yshift=.6cm] {$\overline{B'}$};

\node  (S) at (-7.3,0) {$S$};
\draw [decorate,decoration={brace,mirror,amplitude=8pt}] (-9,8.8)--(-9,1.5) node [black,midway,xshift=-.6cm] {$B$};
\draw [decorate,decoration={brace,mirror,amplitude=8pt}] (-9,-1.5)--(-9,-6) node [black,midway,xshift=-.6cm] {$\overline{B}$};
\end{tikzpicture}
\end{center}
\caption{}\label{fig:claim6}
\end{figure}

If $v\in S\cap B'$, then $|T(\overline{B},B')|\geq 5$ (since $|\overline{B}\cap S'|\geq |S\cap B'|\geq 2$), which implies that 
 $|T(B,\overline{B'})|\leq 3$. Because $y\in T(B,\overline{B'})$ and $y$ has no neighbour in $B\cap \overline{B'}$ (remember that $N_G(y)=\{t,t',t''\}$), it follows that
$B\cap \overline{B'}$ is empty (otherwise  $T(B,\overline{B'})\setminus\{y\}$ was a separator of size at most 2).
Moreover,  $|B \cap S| = |B \cap S' | = 1$, and therefore $B\cap S' = \{t'\}$.\\
It follows that $B=\{b,t'\}$ and $t'\in X$, so that $t'$ has degree at least 4 and must be adjacent to at least one of the two neighbors of $b$ in $S$;
this is not possible as $N_G(b)=\hat{T}$ is an anticlique (Claim~\ref{claim:Tanti}).
Analogously, the assertion $v\in S'\cap B$ is contradictory. \\
It follows that $v\in S\cap S'$.
Thus, $|T(B,\overline{B'})|=|T(\overline{B},B')|=4$ using $|S|=|S'|=4$, where $y$ has no neighbors in $B\cap\overline{B'}$ and $\overline{B}\cap B'$, so that the latter two sets are $X$\!-free.
It follows that $X\cap B'=\{t\}$ and $t$ has degree at least 4. Since $t$ is non-adjacent to the two neighbors of $b$ in $S'$, it must have a neighbor in $B'$ distinct from $b$, implying that $\overline{B}\cap B'$ is non-empty and, consequently, consists of a single vertex $c$.
Since $t$ is not adjacent to the two neighbors of $c$ in $S'$, the only neighbors of $t$ are $b$, $c$, and $y$, a contradiction. \\
Therefore, $\overline{B}\cap \overline{B'}$ is $X$\!-free, and, in particular, $t''\notin X$. By symmetry, $t,t'\notin X$, so that $X$ is disjoint from $T$. 
\end{smallproof}

Let $B,B'$ as before and note that $\overline{B}\cap \overline{B'}\neq \emptyset$ is $X$\!-free (by definition and by Claim~\ref{claim:TXdisj}).
By symmetry we may assume that $\overline{B}\cap B'$ is $X$\!-free (see Claim~\ref{claim:123}~(i)), so that $\overline{B}\cap X \subseteq S'\cap \overline{B}$.
This implies that $T(\overline{B},\overline{B'})$ is not a separator in $G$ of size 3 (since $T(\overline{B},\overline{B'})$ is not $X$\!-free).
Thus, $|T(\overline{B},\overline{B'})|\geq 4$  and $|T(B,B')|\leq 4$ by Claim~\ref{lem:T}~(ii).
By Claim~\ref{claim:123}~(iii), $B\cap B'$ is non-empty, and, as $N_G(B\cap B')=T(B,B')\setminus\{y\}$ is a separator of size 3, we get by Claim~\ref{claim:SFeins} that $B\cap B'$ consists of a single vertex $b$ adjacent to all vertices in $T(B,B')\setminus\{y\}$, and, hence $b$ is adjacent to all vertices in $B'\cap S$; among them, there is at least one vertex from $B'\cap X$ (since $\overline{B}\cap B'$ and $B\cap B'$ are $X$\!-free).
This contradicts Claim~\ref{claim:TXdisj} that $N_G(b)$ must be $X$\!-free; and Lemma~\ref{lem:proofmainthm} is proved.
\end{proof}

\begin{proof}[of Theorem~\ref{main}~(i)]
If $k\in\{1,2,3\}$, then Theorem~\ref{main}~(i) follows immediately from (ii),
since a topological $X$\!-minor is an $X$\!-minor.
So let $k=4$. 
We construct a sequence of graphs $G_0=G, G_1,G_2,\dots $ with $|G_i|-1=|G_{i+1}|\geq |X|$, $X\subseteq V(G_i)$, and $\kappa_{G_i}(X)\geq 4$ for all $i$. 
We can assume that $G$ is connected, otherwise we take the unique component of $G$ containing vertices of $X$ as the graph $G_0$.
The graph $G_{i+1}$ is obtained from $G_i$ by contracting an $X$\!-legal edge $vy$ such that $\kappa_{G_i/vy}(X)\geq 4$. 
This edge $vy$ exists by Lemma~\ref{lem:proofmainthm}, unless $G_i$ is $4$-connected and we stop building the sequence with $G_i$. 
The last graph of the sequence is obtained from $G$ by a sequence of $X$\!-legal edge contractions, i.\,e.\ it is an $X$\!-minor of $G$, which is the desired 4-connected $X$\!-minor.
\end{proof}

\section{Proofs of Theorems \ref{f3} and \ref{S}}\label{P12}

In this section, we present the two missing proofs. 
Theorem~\ref{f3} is a straight consequence of the statements and Theorem~\ref{main}~(ii)
and will be presented next. 
Note that a minor of a graph $G$ does not contain a graph $U$ as a minor if already $G$ does not contain $U$ as a minor and that a minor of a planar graph  is also planar.

By defining $X$\!-spanning generalized cycles and paths, it is possible to adapt the proof idea of Theorem~\ref{f3} by using the result on $X$\!-rooted minors and prove a slightly different version of  Theorem~\ref{S}. 
We will give a glimpse of this, although the main part of this section focuses on the proof of Theorem~\ref{S}, which  needs more effort and uses the theory of Tutte paths. 

\begin{proof}[of Theorem~\ref{f3}]
Let $G$ be a graph and $X\subseteq V(G)$ with $\kappa_G(X)\ge 3$ and properties requested as in Theorem~\ref{f3}. 
By Theorem~\ref{main}~(ii), there is a $3$-connected topological $X$\!-minor $M$ of $G$.
If $x_1x_2\in E(G[X])$ is an edge that is not present in $M$, then we can add the edge $x_1x_2$ to $M$ and $M$ is still a $3$-connected topological $X$\!-minor of $G$.
Let $\varphi$ be an isomorphism  from a certain subdivision of $M$ into a subgraph  of $G$ such that all vertices of $M$ are fixed by $\varphi$. 
Applying the suitable Statement~\ref{Barnette}, ~\ref{Gao}, or~\ref{OtOz} on $M$, we obtain a spanning subgraph $H$ of $M$ containing all vertices from $X$. 
Using the isomorphism $\varphi$, a subdivision of $H$ can be found in $G$ which is $X$\!-spanning and has the properties in $G$ that $H$ has in $M$. 
\end{proof}

 Given a graph $G$ and $X\subseteq V(G)$, a subgraph $H$ of $G$ is an \emph{$X$\!-spanning generalized cycle} of $G$ if $H$ is the edge disjoint union of a cycle $C$ of $G$ and $|X|$ pairwise vertex disjoint paths $P[x_i,y_i]$ of $G$ connecting $x_i$ and $y_i$ (possibly $x_i=y_i$) such that  $X\cap V(P[x_i,y_i])=\{x_i\}$ and $V(C)\cap V(P[x_i,y_i])=\{y_i\}$ for $i=1,\dots,|X|$. An \emph{$X$\!-spanning  generalized path} $P$ of $G$ is defined similarly if in the previous definition the \emph{cycle $C$} is replaced with a \emph{path $P$} of $G$. Note that an $X$\!-spanning path or an $X$\!-spanning cycle is also an  $X$\!-spanning  generalized path or  an  $X$\!-spanning  generalized cycle, respectively, and we observe:

\begin{obs}\label{xspann-obs}
Let $X\subseteq V(G)$ for some graph $G$ and $M$ be an $X$\!-minor of $G$. 
If $M$ has an $X$\!-spanning  path  or an $X$\!-spanning  cycle as a subgraph, then $G$ contains an $X$\!-spanning generalized path  or an $X$\!-spanning generalized cycle, respectively. 
\end{obs}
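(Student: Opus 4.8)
The plan is to use the certificate description of $X$-minors from Lemma~\ref{lem:Xminor} to lift the $X$-spanning path (respectively cycle) of $M$ into $G$, and then to hang a short pendant path inside each bag so that every root vertex of $X$ is reached. By Lemma~\ref{lem:Xminor} I would fix an $M$-certificate $\mathcal{M}=(V_v)_{v\in V(M)}$ of $G$ with $v\in V_v$ for all $v\in V(M)$; then each $G[V_v]$ is connected, the bags are pairwise disjoint, and for every $uv\in E(M)$ there is an edge of $G$ joining $V_u$ and $V_v$. Since $X\subseteq V(M)$, $v\in V_v$ for every $v$, and the bags are pairwise disjoint, it follows that $x\in V_x$ and $V_x\cap X=\{x\}$ for every $x\in X$. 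I will carry out the cycle case; the path case is identical after deleting one connecting edge.

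Let $C=v_0v_1\cdots v_{m-1}v_0$ be the $X$-spanning cycle of $M$, so that $X\subseteq\{v_0,\dots,v_{m-1}\}$ and the $v_i$ are distinct. For each index $i$ (read modulo $m$) I would choose an edge $e_i$ of $G$ witnessing $v_iv_{i+1}\in E(M)$, joining a vertex $p_i\in V_{v_i}$ to a vertex $q_{i+1}\in V_{v_{i+1}}$. Using connectivity of $G[V_{v_i}]$, pick a path $R_i$ in $G[V_{v_i}]$ from the entry point $q_i$ to the exit point $p_i$ (a single vertex if $q_i=p_i$). Because the bags are pairwise disjoint, the paths $R_0,\dots,R_{m-1}$ are vertex-disjoint and the edges $e_i$ run between distinct bags, so the concatenation $R_0\,e_0\,R_1\,e_1\cdots R_{m-1}\,e_{m-1}$ repeats no vertex and is therefore a cycle $C'$ of $G$ with $V(C')\cap V_{v_i}=V(R_i)$ for each $i$.

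It then remains to attach the pendant paths and verify the definition of an $X$-spanning generalized cycle. For each $x\in X$, say $x=v_i$, the bag $V_x=V_{v_i}$ is connected and contains both $x$ and the nonempty set $V(R_i)=V(C')\cap V_x$; I would choose a path $P[x,y_x]$ in $G[V_x]$ from $x$ to $V(R_i)$ whose only vertex of $V(R_i)$ is its endpoint $y_x$ (the trivial one-vertex path if $x\in V(R_i)$, giving $y_x=x$). The $|X|$ paths $P[x,y_x]$ lie in distinct bags and are hence pairwise vertex-disjoint; each satisfies $X\cap V(P[x,y_x])=\{x\}$ because $V_x\cap X=\{x\}$, and $V(C')\cap V(P[x,y_x])=\{y_x\}$ by the minimality of the attachment; and no edge of $P[x,y_x]$ lies on $C'$, so $C'$ and the pendant paths are edge-disjoint. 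Thus $C'\cup\bigcup_{x\in X}P[x,y_x]$ is the desired generalized cycle, and the path case lifts an $X$-spanning path $v_0\cdots v_m$ of $M$ to a simple path $P'$ of $G$ by the same routing (taking $R_0,R_m$ to be single vertices at the free ends) and attaches pendant paths verbatim. There is no deep obstacle here: the construction is a standard lift of a minor model, and the only point needing care is that each pendant path meets the lifted cycle in exactly its endpoint $y_x$ and is otherwise internally disjoint from it, which is precisely where the connectivity and pairwise disjointness of the bags are used.
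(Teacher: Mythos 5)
Your proposal is correct and follows essentially the same route as the paper's proof: both lift the path/cycle of $M$ to $G$ via the certificate of Lemma~\ref{lem:Xminor} by choosing witness edges between consecutive bags, routing through each bag using connectedness of $G[V_v]$, and then attaching a pendant path inside $V_x$ for each root $x$ not already on the lifted subgraph. The only cosmetic difference is that you treat the cycle case explicitly and reduce the path case to it, while the paper does the reverse.
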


\begin{proof}
Let $P$ be an $X$\!-spanning  path  of $M$ and $\mathcal{M}=(V_v)_{v\in V(M)}$ be an $M$-certificate. 
For each edge $uv\in E(P)$, there is an edge $e_{uv}\in E(G)$ between a vertex in $V_u$ and a vertex in $V_v$. 
For each $v\in V(P)$ we define a set $E_v$ of edges in $V_v$ as follows: If $v$ is an end vertex of $P$ or if, for $uv,vw\in E(P)$ with $u\neq w$, the end vertices of $e_{uv}$ and $e_{vw}$ in $V_v$ coincide, then $E_v=\emptyset$. 
Otherwise, the end vertices of $e_{uv}$ and $e_{vw}$ in $V_v$ can be connected by a path $Q$ in $G[V_v]$, since $G[V_v]$ is connected, and we put $E_v=E(Q)$. \\
We obtain a path $P'$ in $G$ with $E(P')=\{e_{uv}\mid uv\in E(P)\}\cup\bigcup_{v\in V(P)}E_v$, which has non-empty intersection with $V_v$ for all $v\in V(P)$. 
If $x\in X$ is not on $P'$, then there is a path $P_x$ in $V_x$ connecting $x$ to the subpath of $P'$ in $G[V_x]$, i.\,e.\ $X\cap V(P_x)=\{x\}$ and $|V(P')\cap V(P_x)|=1$. 
Eventually, $P'$ together with all paths $P_x$ for $x\in X\setminus V(P')$ forms an $X$\!-spanning generalized path of $G$.\\
Using the same arguments, the existence of an $X$\!-spanning generalized cycle of $G$ can be proved if $M$ contains an $X$\!-spanning cycle. 
\end{proof}

Using Theorem~\ref{main}~(i) and the previous Observation~\ref{xspann-obs}, Statements~\ref{Sanders} and~\ref{ToYu} can be immediately translated to locally spanning versions if the formulations ``\emph{$X$\!-spanning  path}'' and ``\emph{$(X\setminus Y)$-spanning cycle}'' in  Theorem~\ref{S}~(i) and~(ii) for the case $Y\subseteq X$
are replaced with ``\emph{$X$\!-spanning generalized path}'' and ``\emph{$(X\setminus Y)$-spanning generalized cycle}'', respectively. 
Theorem~\ref{S}~(i) and~(ii) do not follow directly from Theorem~\ref{main} since  Theorem~\ref{main}~(ii) is not true in case $k=4$ (see Observation~\ref{thm1-obs2}). 
We will use the theory of Tutte-paths in 2-connected plane graphs (see~\cite{sanders1997paths,thomas19944,thomas2005hamilton, thomassen1983theorem, tutte1956theorem}) instead of Theorem~\ref{main} to prove the strong  locally spanning versions, stated in Theorem~\ref{S}~(i) and~(ii), of  Statements~\ref{Sanders} and~\ref{ToYu}, respectively. 

Furthermore, we show  that Theorem~\ref{S} (iii) is a consequence of Statement~\ref{Ellingham} and Theorem~\ref{main}~(i); thereby the upper bound on the maximum degree of the desired tree increases by ``$+1$'' compared to the one of Statement~\ref{Ellingham} (observe again that Theorem~\ref{main}~(ii) does not hold in case $k=4$).

\begin{proof}[of Theorem~\ref{S}]
In the following proof of Theorem~\ref{S}, Observation~\ref{lem2-obs} obtained from Lemma~\ref{Lemma2}  is used several times.

\begin{obs}\label{lem2-obs}
Let $G$ be a  graph, $S\subset V(G)$ be a separator of $G$, and $F$ be an $X$\!-free $S$-fragment of $G$. Furthermore, let $G'$ be the graph obtained from  $G[\overline{F}\cup S]$ by adding all possible edges between vertices of $S$ (if not already present).\\
Then  $\kappa_{G'}(X)\ge \kappa_G(X)$ and $G'$ is planar if all following conditions hold: $G$ is planar, $|S|\le 3$, and $S$ is a minimal separator. 
\end{obs}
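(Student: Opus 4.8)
The plan is to prove Observation~\ref{lem2-obs} as a refinement of Lemma~\ref{Lemma2}: the connectivity bound $\kappa_{G'}(X)\ge\kappa_G(X)$ is already established by Lemma~\ref{Lemma2}, so the only new content is the claim that $G'$ remains planar under the three hypotheses. Thus I would begin by citing Lemma~\ref{Lemma2} verbatim for the first half and then concentrate all effort on planarity.

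For the planarity argument, first I would fix a plane embedding of $G$ (which exists since $G$ is planar) and record that $G'$ is obtained from the induced subgraph $G[\overline{F}\cup S]$ by adding the edges of $S$ that are missing, i.e.\ by making $S$ into a clique. Since $|S|\le 3$, at most $\binom{3}{2}=3$ edges are added, and the crucial observation is that adding all edges among a set of size at most three to a planar graph is harmless \emph{provided} the three vertices of $S$ can be drawn so that each added edge can be routed without crossings. I would exploit that $F$ is a separator-fragment: in the fixed embedding of $G$, deleting $F$ (i.e.\ passing to $G[\overline{F}\cup S]$) frees up the region of the plane previously occupied by $F$, and the minimality of $S$ ensures every vertex of $S$ actually has a neighbour in $F$, so each vertex of $S$ lies on the boundary of the face (or faces) vacated by removing $F$.

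The key step, which I expect to be the main obstacle, is showing that the three vertices of $S$ all lie on the boundary of a \emph{single} face of $G[\overline{F}\cup S]$, so that the (at most three) clique edges on $S$ can be inserted inside that face without crossings. Here I would use minimality of $S$: if $S$ were not a minimal separator, some vertex of $S$ might be unnecessary and need not touch $F$, breaking the argument. Concretely, because $F$ is connected (being contained in a single component after the separator is removed) and every vertex of $S$ sends an edge into $F$, the set $F$ together with its attachments to $S$ occupies a connected region; collapsing $F$ to a point witnesses that $S\cup\{*\}$ forms a wheel-like configuration, forcing $S$ onto a common facial boundary in $G[\overline{F}\cup S]$. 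I would make this precise by contracting the connected subgraph $G[F]$ to a single vertex $f$: the resulting graph is a minor of the planar graph $G$, hence planar, and in it $f$ is adjacent to all of $S$ (by minimality), so $S\cup\{f\}$ is drawn with $f$ in a common face bounded by the vertices of $S$. Deleting $f$ again reopens that face, and since $|S|\le 3$ the three vertices on its boundary can be joined pairwise by the missing edges drawn inside this face.

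Finally I would assemble the pieces: the graph with the clique on $S$ added is exactly $G'$, the insertion of the at most three new edges respects the embedding just constructed, and therefore $G'$ is planar. I would close by remarking that all three hypotheses are used: planarity of $G$ to get an embedding, $|S|\le 3$ so that the complete graph on $S$ is itself planar and can be squeezed into one face, and minimality of $S$ so that every vertex of $S$ attaches to $F$ and thus lies on the common facial boundary that receives the new edges. The main delicacy is the common-face claim; once it is in hand the rest is routine.
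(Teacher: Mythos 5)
Your overall plan is the intended one: the paper states Observation~\ref{lem2-obs} without a separate proof, deriving the connectivity inequality directly from Lemma~\ref{Lemma2} and treating the planarity claim as a routine consequence of the three extra hypotheses, which is exactly the contract--and--apply--a--Y-$\Delta$ argument you sketch. One inaccuracy needs repair, though: you assert that $F$ is connected ``being contained in a single component after the separator is removed,'' but by the paper's definition an $S$-fragment is the union of the vertex sets of \emph{at least one} but not all components of $G-S$, so $F$ may consist of several components and $G[F]$ cannot in general be contracted to a single vertex. The fix is immediate: pick one component $K$ of $G-S$ with $K\subseteq F$, delete the rest of $F$, and contract $K$ to a single vertex $f$. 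Minimality of $S$ gives that every $s\in S$ has a neighbour in \emph{every} component of $G-S$ (otherwise $S\setminus\{s\}$ would already separate), in particular in $K$, so the resulting graph is a planar minor of $G$ in which $f$ is adjacent to all of $S$; deleting $f$ merges its incident faces into one face whose boundary contains all of $S$, and since $|S|\le 3$ the missing clique edges on $S$ can be drawn there. With that correction your argument is complete, and your closing remark correctly identifies where each of the three hypotheses is used.
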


 Before we start to prove Theorem~\ref{S}~(i), we introduce the concept of bridges and Tutte paths~\cite{tutte1956theorem}, on which the proofs of Statements~\ref{Sanders} and~\ref{ToYu} are principally based.
We apply the widely used notation from~\cite{thomas2005hamilton} instead of the terminology from the original paper~\cite{tutte1956theorem} by Tutte. 
Therefore, let $G$ be a connected graph, $H$ be a subgraph of $G$, $V(G)\setminus V(H)\neq \emptyset$, and $K$ be a component of $G-V(H)$. If $N_G(K)\subseteq V(H)$ is the set
of neighbors of $K$ in $V(H)$, then the graph $B$  with  $V(B)=V(K)\cup N_G(K)$ and $E(B)=E(K)\cup \{uv\in E(G)\mid u\in V(K), v\in  V(H)\}$ is a non-trivial \emph{bridge} of $H$, where  $N_G(K)$ and
$V(K)$ are called the sets $T(B)$ of \emph{attachments} and $I(B)$ of  \emph{inner vertices} of $B$, respectively.
(A \emph{trivial bridge} is an edge of $G-E(H)$ whose two end vertices are contained in $H$.) 
Since we are interested in bridges containing a vertex of $X$ as an inner vertex, all references to bridges focus on non-trivial ones.

A path $P$ of $G$ on at least two vertices is a \emph{Tutte path} of $G$ if  each bridge of $P$ has at most three attachments. 
Let $H$ be a subgraph of $G$, then a path $P$ of $G$ on at least two vertices is an \emph{$H$\!-Tutte path} of $G$ if  each bridge of $P$ has at most three attachments and each bridge containing an
edge of $H$ has at most two attachments. 
A \emph{Tutte cycle} and an \emph{$H$\!-Tutte cycle} are defined the same way if in the previous definition the path $P$ is replaced by an cycle. 

In order to state Tutte's original result from~\cite{tutte1956theorem}, we assume that $G$ is a $2$-connected graph embedded into the plane.
The \emph{exterior cycle} of $G$ is the cycle $C_G$ bounding the
infinite face of $G$. 
Tutte proved that, for $y,z\in V(C_G)$ and $e\in E(C_G)$,  $G$ contains a $C_G$-Tutte path from $y$ to $z$ containing $e$.  Thomassen~\cite{thomassen1983theorem}
improved Tutte's result by removing the restriction on the location of $z$, and, eventually,  Sanders (\cite{sanders1997paths}) established the following Lemma~\ref{Sanders-path}:

\begin{lemma}[D.P. Sanders, 1997,~\cite{sanders1997paths}]\label{Sanders-path} 
If $G$ is a $2$-connected plane graph, $e\in E(C_G)$, and $y,z\in V(G)$, then $G$ has a  $C_G$-Tutte path from $y$ to $z$
containing $e$. \end{lemma}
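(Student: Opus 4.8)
The plan is to prove Lemma~\ref{Sanders-path} by induction on $|V(G)|$, following the classical Tutte--Thomassen technique of cutting the plane graph along a carefully chosen vertex or edge and recursing on the pieces. The statement asserts the existence of a $C_G$-Tutte path; recall that this means a path $P$ from $y$ to $z$ through $e$ such that every bridge of $P$ has at most three attachments, and every bridge containing an edge of the exterior cycle $C_G$ has exactly two attachments. The base case, where $G$ is itself a cycle (or very small), is immediate: the path $P$ is obtained by deleting one edge of the cycle appropriately so that $y$, $z$, and $e$ are accommodated, and there are no nontrivial bridges to worry about.

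For the inductive step, the standard approach is to exploit the $2$-connectivity and planarity to locate a \emph{separating structure}. First I would reduce to the case where $G$ is ``internally $4$-connected'' or at least has no small separators disconnecting the relevant boundary data; if $G$ has a $2$-cut $\{a,b\}$, I would split $G$ along $\{a,b\}$ into smaller $2$-connected plane graphs, apply the induction hypothesis to each piece (with $a,b$ placed appropriately on the new exterior cycles and the edge $e$ routed into the piece that contains it), and then glue the resulting Tutte paths together. The bridge-attachment bookkeeping has to be checked carefully at the gluing interface: a bridge of the combined path either lies entirely in one piece (where the hypothesis controls it) or straddles $\{a,b\}$, and one must verify the attachment count stays within the allowed bounds of three in general and two along $C_G$.

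The heart of the argument, when no convenient small cut exists, is the choice of the first edge or vertex of $P$ emanating from $y$ along (or just inside) the exterior cycle, together with an application of the result to a graph obtained by removing a chunk of the boundary. Concretely, I would walk along $C_G$ from $y$, consider the first point where the intended path must leave the exterior cycle to reach $z$ or to cover $e$, contract or delete the ``outer strip'' thereby created, and recurse on the $2$-connected remainder. The freedom Sanders gained over Thomassen---allowing $z$ to be an \emph{arbitrary} interior vertex rather than a boundary vertex---is exactly what forces this step to be delicate: one cannot simply keep $z$ on a shrinking boundary cycle, so the recursion has to track how $z$ migrates between the exterior cycle of $G$ and the exterior cycles of the subgraphs produced, possibly relabeling which vertex plays the role of the endpoint in each subproblem.

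I expect the main obstacle to be precisely this control of the endpoint $z$ and the edge $e$ through the recursion, together with the verification that every newly created bridge inherits the correct attachment bound. Each time we split the graph, a bridge of the final path in $G$ may be assembled from a bridge in a subgraph plus the cut vertices used for gluing; ensuring that the three-attachment (respectively two-attachment-on-$C_G$) condition survives this assembly is the technical crux and is where almost all of the casework lives. Since the excerpt already grants Lemma~\ref{Sanders-path} as a cited result of Sanders~\cite{sanders1997paths}, a fully self-contained proof is not strictly required here; in practice one would either reproduce Sanders's induction in full detail or, more economically, cite~\cite{sanders1997paths} directly and restrict the exposition to confirming that the statement as quoted matches the form needed in the subsequent proof of Theorem~\ref{S}.
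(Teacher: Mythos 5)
The paper does not prove this lemma at all: it is quoted verbatim as a known theorem of Sanders with the citation \cite{sanders1997paths}, and is used as a black box in the proofs of Theorem~\ref{S}~(i) and~(ii). Your closing paragraph correctly identifies this, and deferring to the citation is exactly what the paper does; to that extent your proposal matches the paper's treatment.

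However, the inductive sketch you offer in the first three paragraphs should not be mistaken for a proof, and as described it would not close. The bare statement of Lemma~\ref{Sanders-path} is not strong enough to serve as its own induction hypothesis: this is precisely why Tutte, Thomassen, and Sanders each prove progressively \emph{stronger} statements (Tutte restricts both endpoints and the edge to the outer cycle; Thomassen frees one endpoint; Sanders frees both endpoints while keeping only $e$ on $C_G$), and the induction in Sanders's paper runs on a more refined assertion that tracks the interaction of the path with the outer face and a second distinguished face. Your plan to ``walk along $C_G$, remove an outer strip, and recurse'' is the right general flavor, but the step where $z$ ``migrates'' off the shrinking boundary is exactly the point where the naive induction fails and where Sanders's strengthened hypothesis is needed; likewise the $2$-cut splitting requires care to ensure the glued bridges containing edges of $C_G$ still have exactly two attachments. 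Since all of that casework is deferred in your sketch, the only defensible reading of your proposal is the one you end with: cite \cite{sanders1997paths} and verify that the quoted form is the one used downstream, which it is.
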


The following lemma generalizes Tutte's result. We write  $G\cup H$ for the union of two graphs $G$ and $H$. 

\begin{lemma}[R. Thomas, X. Yu, 1994,~\cite{thomas19944}]\label{ToYu-2cycle-lemma}
If $G$ is a $2$-connected plane graph with outer cycle $C_G$, another facial cycle $D$, and $e\in E(C_G)$, then $G$ has an $(C_G\cup D)$-Tutte cycle $C$ such that $e\in E(C)$ and no $C$-bridge contains edges of both $C_G$ and $D$.
\end{lemma}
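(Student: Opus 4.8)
The plan is to prove the statement by reducing it, as far as possible, to the path version already available as Lemma~\ref{Sanders-path}, and to run a direct induction on $|V(G)|$ where the reduction does not immediately apply. First I would record the geometric meaning of the two extra conditions. Embed $G$ so that $C_G$ bounds the infinite face and $D$ bounds a bounded face; then $G$ lies in the closed annular region between these two boundary curves. A cycle $C$ with $e\in E(C)$ has the property that no $C$-bridge contains edges of both $C_G$ and $D$ exactly when $C$ separates the face bounded by $C_G$ from the face bounded by $D$, i.\,e.\ $C$ is an essential (non-contractible) cycle of this annulus. Thus the task is to find an essential separating cycle through $e$ that is moreover $(E(C_G)\cup E(D))$-Tutte and has every bridge meeting $C_G$ or $D$ attached at exactly two vertices.

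To bring the face $D$ into the scope of the single-face path machinery, I would pass to the auxiliary graph $G^{+}$ obtained by inserting one new vertex $w$ into the face bounded by $D$ and joining $w$ to every vertex of $V(D)$. Then $G^{+}$ is again a $2$-connected plane graph whose outer cycle is still $C_G$ and in which the faces incident with $w$ are triangles. An essential separating cycle of $G$ corresponds to a cycle of $G^{+}$ enclosing $w$, and the local structure at $w$ lets me translate attachments of $D$-bridges into ordinary attachments. The edge $e$ is handled by the standard path-to-cycle device: applying Lemma~\ref{Sanders-path} to a suitably opened copy of $G^{+}$, with the two path endpoints chosen among $V(D)$ and with $e$ prescribed, yields a $C_G$-Tutte path which, after closing it up through $w$ and then deleting $w$ and rerouting along $D$, becomes the desired cycle $C$. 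The bridges of $C$ in $G$ are in bijection with the bridges of the path in $G^{+}$, apart from the controlled bridges created near $w$, so the attachment bounds carry over.

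Where the reduction is not clean --- in particular when $G$ fails to be $3$-connected --- I would instead argue directly by induction on $|V(G)|$, after disposing of small base cases in which $G$ is a cycle or a $\theta$-graph. If $\{a,b\}$ is a $2$-separator, I would split $G$ along it into the bridges at $\{a,b\}$, add the edge $ab$ where needed to keep each piece $2$-connected and to preserve its relevant facial cycles, apply the induction hypothesis (or Lemma~\ref{Sanders-path}) to each piece while tracking which piece carries $e$, which carries a part of $D$, and on which side of the prospective cycle each lies, and finally reassemble the partial Tutte paths and cycles into a single essential cycle. Because a $2$-separator contributes at most two shared attachments, the bound of three attachments per bridge, and the value two for bridges meeting a boundary, are preserved by the gluing.

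The main obstacle will be the bookkeeping in the inductive step: one must maintain \emph{simultaneously} that the assembled cycle passes through $e$, that every bridge has at most three attachments with exactly two for bridges meeting $C_G$ or $D$, and --- the genuinely new requirement absent from Tutte's and Sanders' single-face results --- that no bridge meets both $C_G$ and $D$, equivalently that the cycle stays essential in the annulus after each cut-and-paste. Verifying that two pieces, each contributing a separating or non-separating arc relative to its own copy of the boundary, combine into a globally separating cycle, and that a bridge localized to the $C_G$-side of one piece cannot fuse with a bridge on the $D$-side of another, is the delicate part and is what drives the case distinctions according to the position of the separator relative to $C_G$ and $D$.
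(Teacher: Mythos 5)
First, a remark on the comparison itself: the paper does not prove this lemma --- it is imported verbatim from Thomas and Yu's 1994 paper as a known tool --- so there is no in-paper argument to measure yours against; your attempt has to stand on its own.

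It does not, because the central difficulty of the statement --- that every bridge containing an edge of the \emph{interior} facial cycle $D$ must have exactly two attachments --- is not discharged by your reduction. Lemma~\ref{Sanders-path} controls only bridges meeting the outer cycle; the edges of $D$ are ordinary interior edges of your auxiliary graph $G^{+}$, and inserting the apex $w$ over the face bounded by $D$ gives no handle on a bridge that contains edges of $D$ but is attached away from $w$: such a bridge can still have three attachments after you apply Sanders' theorem to $G^{+}$. Moreover, the passage back from $G^{+}$ to $G$ is not well defined as described. A cycle of $G^{+}$ through $w$ and $e$ becomes, after deleting $w$, a path of $G$ between two vertices $a,b\in V(D)$, and ``rerouting along $D$'' need not yield a cycle, since that path may already meet $D$ strictly between $a$ and $b$; and the vertices of $D$ that end up off the cycle form exactly the kind of $D$-bridges whose attachment number you were supposed to bound. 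So the $3$-connected case, which is the heart of the lemma, is not actually reduced to Lemma~\ref{Sanders-path}.

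The remaining parts are also only plans. The opening equivalence (``no $C$-bridge contains edges of both $C_G$ and $D$ exactly when $C$ is essential'') holds in one direction only --- a contractible cycle can carry its $C_G$-edges and its $D$-edges in different bridges on the same side --- though the direction you actually use is the correct one. In the $2$-separator induction you yourself flag the reassembly (keeping the cycle essential, keeping two attachments for boundary bridges, routing through $e$) as unresolved; that bookkeeping, together with the two-attachment condition for $D$-bridges in the $3$-connected case, is precisely where Thomas and Yu's argument does its work. As written, the proposal is an outline with its main step missing rather than a proof.
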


The next lemma describes where the vertices from some $X\subseteq V(G)$ of a graph $G$ are located relative to some Tutte path. 

\begin{lemma}\label{bridge} 
Let $G$ be a $2$-connected graph, $X\subseteq V(G)$, $\kappa_G (X)\ge 4$, and let $Q$ be an Tutte path of $G$. 
If $X$ is not a subset of $V(Q)$, then $X\subseteq V(B)$
for some bridge $B$ of $Q$ and, in this case, $Q$ contains at most $3$ vertices of $X$. \end{lemma}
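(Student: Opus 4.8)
The plan is to play the hypothesis $\kappa_G(X)\ge 4$ against the defining property of a Tutte path, namely that every bridge of $Q$ has at most three attachments. The key observation is that for any bridge $B$ of $Q$ the attachment set $T(B)=N_G(I(B))$ is a vertex cut of size at most $3$ separating the inner vertices $I(B)$ from the rest of $G$; since $|T(B)|\le 3<4\le\kappa_G(X)$, such a set can never be an $X$\!-separator.

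First I would locate the relevant bridge. As $X\not\subseteq V(Q)$, pick $x\in X\setminus V(Q)$; then $x$ lies in some component $K$ of $G-V(Q)$ and hence is an inner vertex of the unique non-trivial bridge $B$ with $I(B)=V(K)$. I would then show $X\subseteq V(B)$ by contradiction: if some $x'\in X$ satisfied $x'\notin V(B)=I(B)\cup T(B)$, then, because $N_G(I(B))\subseteq T(B)$, removing $T(B)$ would leave $x\in I(B)$ and $x'$ in different components of $G-T(B)$. Thus $T(B)$ would be an $X$\!-separator of size at most $3$, contradicting $\kappa_G(X)\ge 4$. Hence every vertex of $X$ belongs to $V(B)$, which proves the first assertion.

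For the second assertion I would use that $V(B)\cap V(Q)=T(B)$: the inner vertices $I(B)=V(K)$ are by definition disjoint from $V(Q)$, while the attachments $T(B)\subseteq V(Q)$. Consequently $X\cap V(Q)=X\cap V(B)\cap V(Q)=X\cap T(B)\subseteq T(B)$, and since $|T(B)|\le 3$, the path $Q$ contains at most three vertices of $X$.

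The argument is short, and its only delicate point is careful bookkeeping with the definitions of bridge, inner vertex, and attachment; in particular one must check that $T(B)$ genuinely separates $x$ from $x'$ in $G$ (and not merely in $G-V(Q)$), which is exactly the content of $N_G(I(B))\subseteq T(B)$. The assumption that $G$ is $2$-connected serves only to make the Tutte-path and bridge terminology applicable; the connectivity that actually drives the proof comes entirely from $\kappa_G(X)\ge 4$, and no appeal to Lemma~\ref{Sanders-path} or Lemma~\ref{ToYu-2cycle-lemma} is needed here.
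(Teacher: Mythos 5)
Your proposal is correct and follows essentially the same route as the paper's proof: pick $x\in X\setminus V(Q)$, observe that the attachment set $T(B)$ of its bridge would be an $X$\!-separator of size at most $3$ if some $x'\in X$ lay outside $V(B)$, and then bound $|X\cap V(Q)|$ by $|V(B)\cap V(Q)|=|T(B)|\le 3$. Your version merely spells out the bookkeeping ($I(B)=V(K)$, $N_G(I(B))\subseteq T(B)$, $V(B)\cap V(Q)=T(B)$) that the paper leaves implicit.
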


\begin{smallproof}
Let $x\in X\setminus V(Q)$, then there is a bridge $B$ of $Q$ containing $x$ as an inner vertex and $B$ has at most three attachments on $Q$.
Assume there is a vertex $x'\in X\setminus V(B)$. Then the attachments $T(B)$ form an $X$\!-separator of $G$, contradicting $\kappa_G (X)\ge 4$. Hence, $X\subseteq V(B)$
and $|X\cap V(Q)|\le |V(B)\cap V(Q)|=|T(B)|\le 3$.
\end{smallproof}

\begin{proof}[of Theorem~\ref{S}~(i)]
 Suppose, to the contrary, that Theorem~\ref{S}~(i) does not hold and let $G$ be a counterexample  such that $|V(G)|$ is minimum.

 If $G$ is not $2$-connected, then, because $\kappa_G(X)\ge 4$,  $X\subseteq V(K)$ for a block $K$ of $G$.  Moreover, $\specialedges\subset E(K)$ and,  by Lemma~\ref{Lemma2}, $\kappa_K(X)\ge \kappa_G(X)\ge 4$. Thus, $K$
is a smaller counterexample than $G$,  a contradiction. 

 Assume that $G$ has a separator  $S=\{ u,v\} \subseteq V(G)$. Because $\kappa_G (X)\ge 4$, there is an $S$-fragment $F$, such that $X\subseteq F\cup S$.
Let $G_1$ be obtained from $G[F\cup S]$ by adding the edge $uv$ (if not already present).
By Observation~\ref{lem2-obs}, it follows $X\subseteq V(G_1)$, $\specialedges\subset E(G_1)$, and $\kappa_{G_1}(X)\ge 4$. 
Since  $G-F$ contains $S$, there is a path $Q$ of $G-F$ with ends $u$ and $v$. 
If $G_1$ has a path $P_1$ satisfying Theorem~\ref{S}~(i), then we can replace the edge $uv$ (if $uv \in E(P_1)\setminus E(G)$) by $Q$ and in both cases this will give us the required path $P$ in $G$. 
Therefore $G_1$ has no such path,
contradicting the minimality of $G$ as a counterexample.

 Hence, we may assume that $G$ is $3$-connected and consider two cases to complete the proof of Theorem~\ref{S}~(i).

 \emph{Case $1$}. $|\specialedges|=1$.\\ 
Let $Q$ be a Tutte path of $G$ connecting $x_1$ and $x_2$ such that $\specialedges\subset E(Q)$ (Lemma~\ref{Sanders-path}). 
If $X\subseteq V(Q)$, then $Q$ is the desired path $P$, contradicting the choice of $G$.
Otherwise,  it follows $|V(Q)\cap X|\leq 3$ by Lemma~\ref{bridge} and there is a bridge $B$ of $Q$ such that $X\subseteq V(B)$, $I(B)\cap X\neq \emptyset$.
Since $\specialedges$ consists of one edge $e$ from $E(G[X])$ and $x_1x_2\notin \specialedges$, we may assume that $e=x_1u$ for some vertex $u\in X$. 
 Hence, $T(B)=\{x_1,x_2,u\}$.\\
If $|V(Q)|\ge 4$ or $Q$ has a second bridge distinct from $B$, then the graph $G_1$  obtained from $G[I(B)\cup T(B)]$ by adding all possible edges between vertices of $T(B)$ (if not already present --- see Lemma \ref{Lemma2} with $T(B)$ as separator) fulfils $X\subseteq V(G_1)$, $e\in E(G_1)$, and $\kappa_{G_1}(X)\ge 4$. 
If $G_1$ has a path $P_1$ satisfying Theorem~\ref{S}~(i), then it contains the edge $e=x_1u$ and therefore misses the edges $x_1x_2$ and $x_2u$. 
The path $P_1$ is also a path in $G$ and is a required path. 
Therefore $G_1$ has no such path,
contradicting the minimality of $G$ as a counterexample.\\
Thus, $G=G_1$ if $x_1x_2\in E(G)$ or $G$ is obtained from $G_1$ by removing $x_1x_2$ otherwise. Moreover,  $Q$ is the path of length 2 on vertices $x_1,u,x_2$. 
Let $G'=G-x_1$ and assume that $G'$ is embedded in the plane.
Note that $G'$ is $2$-connected and therefore, there exists a face that contains the vertex $x_1$ and a facial cycle $C$ bounding this face. Then $u\in V(C)$ and let $e'$ be an edge of $C$ other than $ux_2$. 
By Lemma~\ref{Sanders-path}, there exists a $C$-Tutte path $R$ of $G'$ from $u$ to $x_2$ through the edge $e'$. 
If $X\setminus\{x_1\}\subseteq V(R)$, then the path obtained from $R$ by adding $x_1$ and $e=x_1u$ would be a path of $G$ containing $X$ and $\specialedges$, a contradiction. Note that $x_1x_2\notin E(R)$.\\
Otherwise, there is a bridge $B'$ of $R$ such that $I(B')\cap X\neq\emptyset$. If the component of $G-R$ containing $I(B')$ contains $x_1$, i.\,e.\ the bridge $B'$ is adjacent to $x_1$ if we put $x_1$ back, then $I(B')$ contains a vertex of the facial cycle $C$ and, therefore, the bridge $B'$ contains edges of $C$. 
Thus, $B'$ has at most two attachments and $T(B')\cup\{x_1\}$ is a 3-separator in $G$. 
By Lemma~\ref{bridge}, $X\setminus\{x_1\}\subseteq V(B')$ and therefore $T(B')=\{u,x_2\}$. 
The graph $G_2$  obtained from $G[V(B')\cup \{x_1\}]$ by adding all possible edges between vertices of $T(B')\cup\{x_1\}$ (if not already present --- see Lemma \ref{Lemma2} with $T(B')\cup\{x_1\}$ as separator) fulfils $X\subseteq V(G_2)$, $e\in E(G_2)$, and $\kappa_{G_2}(X)\ge 4$. Since $e'\neq ux_2$, $|V(G_2)|<|V(G)|$ and by the minimality of $G$, the graph $G_2$ contains an
$X$\!-spanning path $P_2$ connecting $x_1$ and $x_2$ with $e\in E(P_2)$. Moreovere, $x_1x_2\notin E(P_2)$. 
Because  $G_2$ is a subgraph of $G\cup\{x_1x_2\}$, it follows that $P_2$ is a desired path of $G$. \\
If the component of $G-R$ containing $I(B')$ does not contain $x_1$, then $T(B')$ is also a 3-separator in $G$. This separator separates a vertex from $X\cap I(B')$ from $x_1$, a contradiction to $\kappa_G (X)\ge 4$.

 \emph{Case $2$}. $\specialedges=\emptyset$.\\
 Choose an arbitrary edge $e=uv$  of $G$ such that $\{u,v\}\cap \{ x_1,x_2\}=\emptyset $. To see that $e$ exists,  assume that each edge of $G$ is incident with $x_1$ or with $x_2$. Then
 $G-\{x_1,x_2\}$ is edgeless, a contradiction to $\kappa_G(X)\ge 4$ and $|X|\ge 5$.\\
Now consider  a Tutte path $Q$ from $x_1$ to $x_2$ through $e$. Since  $X\subseteq V(Q)$ contradicts the choice of $G$, there exists a bridge $B$ of $Q$ such that $X\subseteq V(B)$. It follows
$X\cap I(B)\neq \emptyset$ and  $x_1,x_2\in T(B)$. Since $|V(Q)|\ge 4$, the graph $G_1$ obtained from $G[I(B)\cup T(B)]$ by adding all possible edges between vertices of $T(B)$ (if not already present), 
fulfils $X\subseteq V(G_1)$ and $\kappa_{G_1}(X)\ge 4$. 
If $G_1$ has a path $P_1$ satisfying Theorem~\ref{S}~(i), then it misses the edge $x_1x_2$. 
We can replace in $P_1$ the other two edges of $G_1[T(B)]$ (if they exist) by subpaths of $Q$ and this will give us a required path $P$ in $G$. 
Therefore $G_1$ has no such path,
contradicting the minimality of $G$ as a counterexample.
\end{proof}

\begin{proof}[of Theorem~\ref{S}~(ii)]
Suppose, to the contrary, that Theorem~\ref{S}~(ii) does not hold and let $G$ be a counterexample  such that $|V(G)|$ is minimum. 
Moreover, we assume that the counterexample $G$ with $X,Y$ maximizes $|Y|+|Y\cap X|$. 

If $G$ is not $2$-connected, then,  as in the proof of Theorem~\ref{S}~(i), there is a block $K$
of $G$ with $X\subseteq V(K)$ and $\kappa_K(X)\ge 4$. Thus, $K-Y$
contains an $(X\setminus Y)$-spanning cycle by the minimality of $G$.

\emph{Case $A$:}
First we consider the case $Y=\{y_1,y_2\} \subseteq X$. \\
Assume that $G$ is embedded in the plane such that $y_1$ is incident
with the outer face and consider $G-\{y_1,y_2\}$.
Since $|X|\ge 5$ (because $\kappa_G(X)\geq 4$) and $\kappa_{(G-\{y_1,y_2\})}(X\setminus\{y_1,y_2\})\geq 2$,
there is a block $H$ containing $X\setminus\{y_1,y_2\}$ of $G-\{y_1,y_2\}$. 

Assume there is a component $K$ of $G-(\{y_1,y_2\}\cup V(H))$
and let $N_G(K)$ be
the neighbors of $K$ in $G$.
Because $H$, as a block of $G-\{y_1,y_2\}$, is a maximal $2$-connected subgraph,  it follows  $|N_G(K)\cap V(H)|\leq 1$. Obviously, $N_G(K)\setminus V(H)\subseteq \{y_1,y_2\}$  and, therefore, $|N_G(K)|\le 3$.\\
Consider the  graph $G_1$ obtained from $G$ by removing $V(K)$ and adding all
edges between the vertices of  $N_G(K)$ (if not already present). Then $G_1$ is planar since $|N_G(K)|\leq 3$ and, furthermore, $\kappa_{G_1}(X)\ge 4$ (see Observation~\ref{lem2-obs}).
By the choice of $G$, there is a cycle $C$ of $G_1$ containing all vertices of
$X$ except $y_1$ and $y_2$. Evidently, $C$ misses all new edges between
the vertices of $N_G(K)$, thus, $C$ is also a  cycle of $G$, a contradiction.
We conclude that $H=G-\{y_1,y_2\}$. 

 For $i=1,2$, there are (not necessarily distinct) faces $\alpha_i$ of $H$ containing the vertex $y_i$ in $G$ and let $C_i$ be the facial cycle of $\alpha_i$ in $H$.
Because of the choice of the embedding of $G$, $\alpha_1$ is the outer face of $H$, thus, $C_H=C_1$.
We follow the proof in~\cite{thomas19944}.

 \emph{Case $1$}: $C_1=C_2$.\\
If $\alpha_1\ne\alpha_2$, then $H=C_1$ and $C_1$ is the desired cycle.
Otherwise, the vertices of $V(C_1)$ can be numbered with $v_1,v_2,\dots,v_k$ according to their cyclic order in a such way that $y_2$ is not adjacent to vertices $v_2,v_3,\dots, v_{l-1}$ and $y_1$ is not adjacent to vertices $v_{l+1},v_{l+2},\dots, v_k$ for some integer $l$ with $3\leq l\leq k-1$ (note that $y_1$ and $y_2$ have degree at least $4$ in $G$).
We apply Lemma~\ref{Sanders-path} and consider a $C_1$-Tutte path $Q$ of $H$ from $v_1$ to $v_2$ containing $v_lv_{l+1}$ which can be joined by $v_1v_2$ to a cycle. \\
Since $G$ is a counterexample, there is $x\in X\setminus (V(Q)\cup \{y_1,y_2\})$ and
a bridge $B$ of $Q$ in $H$ containing $x$ as an inner vertex.\\
If $I(B)\cap V(C_1)=\emptyset$, then $N_G(y_1)\cap V(B)\subseteq T(B)$ and
$T(B)$ separates $x$ from $y_1$ in $G$, contradicting $\kappa_G (X)\ge 4$.\\
Otherwise, there is  $v\in I(B)\cap V(C_1)$. Then the edge $uv$, where $u$ is a neighbor of $v$ in $C_1$, belongs to $B$. Especially, $u\in V(B)$ and $B$ has exactly two  attachments  $s$ and $t$ in $V(Q)$ and $s,t\in V(C_1)$. Thus, the subpath $P$ of $C_1$ from $s$ to $t$ containing $v$ is a path of $B$.
Because of planarity, the bridge cannot intersect $C_1$ outside the subpath $P$ and we obtain $(I(B)\cap V(C_1))\setminus V(P)= \emptyset$.

Furthermore, $v_1,v_l\notin I(B)$ and $(I(B)\cap V(C_1))\cap N_G(y_i)=\emptyset$ for one $i\in\{1,2\}$.
But then $N_G(y_i)\cap V(B)\subseteq T(B)$ and $T(B)\cup\{y_{3-i}\}$ separates $x$ from $y_i$, contradicting $\kappa_G (X)\geq 4$.

 \emph{Case $2$}: $C_1\ne C_2$.\\
By Lemma~\ref{ToYu-2cycle-lemma}, there is an
$(C_1\cup C_2)$-Tutte cycle $C$. \\
 Since $G$ is a counterexample, there is $x\in X\setminus (V(C)\cup\{y_1,y_2\})$ and
a bridge $B$ of $C$ containing $x$ as an inner vertex and, by Lemma~\ref{ToYu-2cycle-lemma},  not simultaneously edges from both cycles $C_1$ and $C_2$.
Hence, $I(B)\cap V(C_1)=\emptyset$ or $I(B)\cap V(C_2)=\emptyset$, and in both
cases $T(B)$ separates $x$ from $y_1$ or $y_2$ in $G$, contradicting $\kappa_G
(X)\ge 4$. 

\emph{Case $B$:}
Now we consider the case that $Y=\{y_1,y_2\}$ and $|Y \cap X|\le 1$. \\
Among all possible choices for the counterexample $G$, 
choose the one maximizing $d_G(y_1)+d_G(y_2)$. 
Note that the maximum exists because $|V(G)|$ is already determined. 

Let $G$ be embedded into the plane. If $y\in \{y_1,y_2\}$ is incident with a face of $G$ which has a vertex $z$  at its boundary that is not a neighbour of $y$, then $G+yz$ is still planar. It follows $\kappa_{(G+yz)}(X)\ge 4$ and, because of the choice of the counterexample, $(G+yz)-\{y_1,y_2\}=G-\{y_1,y_2\}$ has a desired cycle, a contradiction.  Hence, if $y\in \{y_1,y_2\}$ is incident with a face  of $G$, then all vertices at its boundary are neighbours of $y$.
Thus, since $G$ is simple, $|N_G(y)|\ge 3$ for $y\in \{y_1,y_2\}$.

Next we show that $G$ is $3$-connected. 
Let $S=\{u,v\}$ be a separator of $G$. 
Because $\kappa_G (X)\ge 4$, there is an $S$-fragment $F$ such that $X\subseteq F\cup S$.
Let $G_1$ be obtained from $G[F\cup S]$ by adding the edge $uv$ (if not already present).
By Observation~\ref{lem2-obs}, it follows $X\subseteq V(G_1)$ and $\kappa_{G_1}(X)\ge 4$. 
If $uv\in E(G)$, then $G_1-Y$ contains an $(X\setminus Y)$-spanning cycle, which is also a desired cycle in $G$, a contradiction. 
Suppose first that $\{y_1,y_2\}\subset V(G_1)$. 
As before $G_1-Y$ contains an $(X\setminus Y)$-spanning cycle $C$, which would be an $(X\setminus Y)$-spanning cycle of $G$ if $uv\notin E(C)$. 
It follows that  $uv\in E(C)$ and thus,  $u,v\notin Y$. 
There is a path $Q$ of $G-F$ with ends $u$ and $v$ that does not contain any vertex from $Y$. 
We can replace the edge $uv$ in $C$ by $Q$ and obtain an $(X\setminus Y)$-spanning cycle of $G-Y$, a contradiction.

What follows is that, say, $y_1\notin V(G_1)$. Since $|N_G(y_1)|\ge 3$, we have $|V(G_1)|+1<|V(G)|$. 
Let $G_2$ be obtained from $G[F\cup S]$ by adding a path on vertices $u,z,v$ between $u$ and $v$. Remember that the edge $uv$ is not present in $G$.
It is straightforward to check that  Lemma~\ref{Lemma2} also holds if paths (instead of edges) between two vertices are inserted (if there is no edge between those two vertices). 
Thus  $\kappa_{G_2}(X)\ge\kappa_{G}(X)\ge 4$, $G_2$ is planar, $X\subseteq V(G_2)$, and $|V(G_2)|=|V(G_1)|+1<|V(G)|$.
Let $Y_2=(Y\cap V(G_1))\cup\{z\}$.
Then  $G_2-Y_2$ contains an $(X\setminus Y_2)$-spanning cycle $C$, which is an $(X\setminus Y)$-spanning cycle of $G-Y$ (because $V(C)\subseteq F\cup S$), a contradiction. 
Hence, we may assume that $G$ is $3$-connected.

If $\kappa_G(Z)\ge 4$ for $Z=X\cup \{y\}$ where $y\in \{y_1,y_2\}\setminus X$, then $G-Y$ has a $(Z\setminus Y)$-spanning cycle by the initial choice of $G$, 
a contradiction because $Z\setminus \{y_1,y_2\}=X\setminus \{y_1,y_2\}$. 
Hence, $\kappa_G(Z)\le 3$ and, because $\kappa_G(X)\ge 4$, it follows that
for $y\in \{y_1,y_2\}\setminus X$, there is as $3$-separator $S$ separating $y$ from $X$. 

Assume from now on that $y_1\notin X$. 
There is a $3$-separator $S=\{u,v,w\}$ of $G$ and a minimal $S$-fragment $F$, such that $X\subseteq F\cup S$ and $y_1\in \overline{F}$. 
Let $G_1$ be obtained from $G[F\cup S]$ by adding a new  vertex (also named) $y_1$ and three edges $uy_1$, $vy_1$, and $wy_1$.
Let $Y_1=Y\cap V(G_1)$. If $G_1-Y_1$ contains an $(X\setminus Y_1)$-spanning cycle $C$, then $C$ is a cycle of $G[F\cup S]$ and, therefore,  an $(X\setminus Y)$-spanning cycle of $G-Y$,  a contradiction. 
Hence,  $G_1$ does not have such a cycle and is a smaller counterexample unless $G_1=G$.

It follows that $G[F\cup S] = G-\{y_1\}$. 
We have argued in the beginning of \emph{Case}~$B$ that for an embedding of $G$, all vertices of the facial cycle of the face of $G-\{y_1\}$ that is incident with $y_1$ are adjacent to $y_1$. 
Thus, $N_G(y_1)=S$ and $G[S]$ is a cycle. 
Let $G'=G[F\cup S]$. By Observation~\ref{lem2-obs}, it follows $G'$ is planar, $X\subseteq V(G')$ and $\kappa_{G'}(X)\ge 4$. 
Hence, $G'-\{y_2\}$ contains an $(X\setminus Y)$-spanning cycle, which is  an $(X\setminus Y)$-spanning cycle in $G-Y$, a contradiction. 
Hence, $|Y|<2$ and this is the last case to be considered in order to complete the proof of Theorem~\ref{S}~(ii). 

\emph{Case $C$:} $|Y|<2$.\\
If there is a vertex $z\in V(G)\setminus(X\cup Y)$, then let $Y'=Y\cup\{z\}$. 
By the choice of $G$, $G-Y'$ contains an $(X\setminus Y')$-spanning cycle, which is an $(X\setminus Y)$-spanning cycle in $G-Y$, too.\\
Hence, $V(G)=X\cup Y$. 
If $Y\subseteq X$, then $G$ is $4$-connected and contains an $(X\setminus Y)$-spanning cycle by an immediate corollary of Statement~\ref{Sanders} (since $|Y|\in\{0,1\}$). 

It remains to consider the case $Y=\{y_1\}$ and $y_1\notin X$. 
By the same arguments as in \emph{Case}~$B$ (omitting $y_2$ everywhere), it follows that 
$G[X] = G-\{y_1\}$ and for $S:=N_G(y_1)$, it is $|S|=3$ and $G[S]$ is a cycle. 
Hence, $G[X]$ is 4-connected and, therefore, contains a hamiltonian cycle, which is  an $(X\setminus Y)$-spanning cycle of $G$.
\end{proof}

\begin{proof}[of Theorem~\ref{S}~(iii)]
 Note that any minor of $G$ is also embeddable on a surface of Euler characteristic~$\chi$. 
 Using Theorem~\ref{main}~(i) and Statement~\ref{Ellingham}, let $M$ be a $4$-connected $X$\!-minor of $G$, $\mathcal{M}=(V_v)_{v\in V(M)}$ be an $M$-certificate of $G$, and $T$ be a spanning tree of $M$ of maximum degree at most $\lceil \frac{10-\chi }{4}\rceil$.\\
 For each edge $e=uv\in E(T)$, let $e'\in E(G)$ be an arbitrary edge between a vertex in $V_u$ and a vertex in $V_v$. Furthermore, set $V'_v=V_v\cap (\bigcup_{e\in E(T)}V(e'))$ for $v\in V(T)$. 
 Moreover, for $v\in V(T)$ and $w\in V'_v$, let $f(w)=|\{ e\in E(T)\mid w$ is incident with $e'\}|$. \\
 Since $\sum_{w\in V'_v}f(w)=d_T(v)$, it follows $1\le f(w)\le d_T(v)-|V'_v|+1$ for all $w\in V'_v$.
 Since $G[V_v]$ is connected for $v\in V(T)$, the following Observation~\ref{tree} can be seen readily by induction on $|V'_v|$.

 \begin{obs}\label{tree}
 For $v\in V(T)$, $G[V_v]$ contains a $V'_v$-spanning tree $T_{V'_v}$ such that, for all  $w\in V(T_{V'_v})$,  $d_{T_{V'_v}}(w)\le |V'_v|-1$ if $w\in V'_v$ and  $d_{T_{V'_v}}(w)\le |V'_v|$, otherwise.
 \end{obs}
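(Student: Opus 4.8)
The plan is to induct on $m := |V'_v|$, exploiting the connectedness of $G[V_v]$ at each step. For the base case $m=1$, say $V'_v=\{w_0\}$, the one-vertex tree consisting of $w_0$ alone is $V'_v$-spanning and satisfies $d(w_0)=0\le m-1$, so the claim holds.

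For the inductive step, assume $m\ge 2$ and that the statement holds for every subset of $V_v$ of size $m-1$. I would fix any $w\in V'_v$ and put $V''_v:=V'_v\setminus\{w\}$, so $|V''_v|=m-1$. The induction hypothesis yields a $V''_v$-spanning tree $T''$ in $G[V_v]$ with $d_{T''}(z)\le m-2$ for $z\in V''_v$ and $d_{T''}(z)\le m-1$ for every other $z\in V(T'')$. If $w$ already lies on $T''$, then $w\notin V''_v$ gives $d_{T''}(w)\le m-1$, and $T_{V'_v}:=T''$ already meets all the (looser) bounds demanded for the size-$m$ set.

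Otherwise I would join $w$ to $T''$ by a shortest path $P$ in $G[V_v]$ from $w$ to the vertex set $V(T'')$; this path exists since $G[V_v]$ is connected, and by minimality only its final vertex $t$ lies on $T''$, while its (new) internal vertices lie outside $V(T'')\supseteq V''_v$ and hence outside $V'_v$. Then $T_{V'_v}:=T''\cup P$ is a tree spanning $V'_v$, and the degrees check out: $w$ becomes a leaf (degree $1\le m-1$); each internal vertex of $P$ gets degree $2\le m$ while lying outside $V'_v$; the attachment vertex $t$ has its degree raised by exactly one, yielding $d(t)\le m-1$ when $t\in V''_v\subseteq V'_v$ and $d(t)\le m$ when $t\notin V'_v$; and all other vertices retain their $T''$-degrees, which already satisfy the weaker size-$m$ bounds.

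The only delicate point --- and the part worth stating carefully --- is this degree accounting at $t$: the induction must deliver the sharp bounds $m-2$ and $m-1$ precisely so that the single unit added by appending $P$ stays within the targets $m-1$ and $m$. The separate handling of the subcase $w\in V(T'')$ (where $w$ occurs merely as a Steiner vertex of $T''$) is what prevents having to reattach an already-present vertex. All remaining verifications are routine, and the induction closes.
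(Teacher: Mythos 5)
Your proof is correct and follows exactly the route the paper indicates: the paper only remarks that the observation ``can be seen readily by induction on $|V'_v|$'' using the connectedness of $G[V_v]$, and your argument is precisely that induction carried out in full, with the degree accounting at the attachment vertex handled correctly. No discrepancies with the paper's intended proof.
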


  Let $T^*$ be the tree of $G$ with 
\begin{align*}
V(T^*)&=\bigcup_{v\in V(T)}V(T_{V'_v}) \text{ and} \\
E(T^*)&=(\bigcup_{v\in V(T)}E(T_{V'_v}))~\cup ~\{e'~|~e\in E(T)\}.
\end{align*}
 Since $f(w)\le d_{T}(v)-|V'_v|+1$ and $d_{T_{V'_v}}(w)\le |V'_v|-1$, it follows 
 $d_{T^*}(w)=f(w)+d_{T_{V'_v}}(w)\le d_{T}(v)$ for $w\in V'_v$ and $v\in V(T)$.
 If $w\in (V_v\setminus V'_v)\cap V(T^*)$ for some $v\in V(T)$, then 
 $d_{T^*}(w)=d_{T_{V'_v}}(w)\le  |V'_v|\le \sum_{u\in V'_v}f(u)= d_T(v)$.
All together, the maximum degree of $T^*$ is at most $\lceil \frac{10-\chi }{4}\rceil$.

  Clearly, $X\subseteq \bigcup_{v\in V(T)}V_v$ and $|X\cap V_v|\le 1$ for $v\in V(T)$.
For every  $v\in V(T)$ and $x\in X\cap (V_v\setminus V(T_{V'_v})) $, let  $P$ be a path of $G[V_v]$ connecting $x$ with a vertex $y$ of $T_{V'_v}$ such that $V(P)\cap V(T_{V'_v})=\{ y\}$ and add $P$ to $T^*$. The resulting graph is the desired $X$\!-spanning $(\lceil \frac{10-\chi }{4}\rceil+1)$-tree of $G$. 
\end{proof}
\end{proof}

\subsubsection*{Acknowledgement}
The authors would like to thank the two referees, whose extensive and constructive suggestions helped to improve the quality of this article.

\printbibliography

\end{document}